\numberwithin{equation}{section}
\newcommand{\bu}{{\bf u}}
\newcommand{\R}{{\mathbb R}}
\newcommand{\N}{{\mathbb N}}
\newcommand{\Z}{\mathbb{Z}}
\newcommand{\CC}{\mathcal{C}}
\newcommand{\CV}{\mathcal{V}}
\newcommand{\CF}{\mathcal{F}}
\newcommand{\scrH}{\mathscr{H}}
\newcommand{\scrL}{\mathscr{L}}
\newcommand{\Fu}{\mathfrak{u}}
\newcommand{\Fc}{\mathfrak{c}}
\newcommand{\Fm}{\mathfrak{m}}
\newcommand{\Fq}{\mathfrak{q}}
\newcommand{\Fv}{\mathfrak{v}}
\newcommand{\Fs}{\mathfrak{s}}
\newcommand{\eps}{\varepsilon}
\newcommand{\loc}{\mathrm{loc}}
\newcommand{\supp}{\mathrm{supp}}
\newcommand{\sym}{\mathrm{sym}}
\newtheorem{theorembis}{Theorem}\setcounter{theorembis}{0}
\newtheorem{asu}{}
\numberwithin{proposition}{section}
\numberwithin{lemma}{section}
\numberwithin{remark}{section}
\numberwithin{definition}{section}
\numberwithin{example}{section}
\numberwithin{corollary}{section}
\numberwithin{figure}{section}
\begin{document}

\title{Non-minimizing connecting orbits for multi-well systems%\thanks{Grants or other notes
%about the article that should go on the front page should be
%placed here. General acknowledgments should be placed at the end of the article.}
}
%\subtitle{Do you have a subtitle?\\ If so, write it here}

%\titlerunning{Short form of title}        % if too long for running head

\author{Ramon Oliver-Bonafoux}
% \and
  %      Second Author %etc.

%\authorrunning{Short form of author list} % if too long for running head

\institute{Ramon Oliver-Bonafoux \at
              Sorbonne Université, Laboratoire Jacques-Louis Lions. 4 Place Jussieu, 75005 Paris (France). \\
%              Tel.: +123-45-678910\\
 %             Fax: +123-45-678910\\
              \email{ramon.oliver\_bonafoux@sorbonne-universite.fr}           %  \\
%             \emph{Present address:} of F. Author  %  if needed
%           \and
 %          S. Author \at
  %            second address
}

\date{Received: date / Accepted: date}
% The correct dates will be entered by the editor

\maketitle

\begin{abstract}
Given a nonnegative, smooth potential $V: \R^k \to \R$ ($k \geq 2$) with multiple zeros, we say that a curve $\Fq: \R \to \R^k$ is a \textit{connecting orbit} if it solves the autonomous system of ordinary differential equations
\begin{equation}
\Fq''= \nabla_{\bu} V(\Fq) , \hspace{2mm} \mbox{ in } \R
\end{equation}
and tends to a zero of $V$ at $\pm \infty$. Broadly, our goal is to study the existence of connecting orbits for the problem above using variational methods. Despite the rich previous literature concerning the existence of connecting orbits for other types of second order systems, to our knowledge only connecting orbits which minimize the associated energy functional in a suitable function space were proven to exist for autonomous multi-well potentials. The contribution of this paper is to provide, for a class of such potentials, some existence results regarding \textit{non-minimizing} connecting orbits. Our results are closely related to the ones in the same spirit obtained by J. Bisgard in his PhD thesis (University of Wisconsin-Madison, 2005), where non-autonomous periodic multi-well potentials (ultimately excluding autonomous potentials) are considered. Our approach is based on several refined versions of the classical Mountain Pass Lemma.
\keywords{Multi-well potentials \and Variational methods \and Mountain pass lemma \and Allen-Cahn systems.}
% \PACS{PACS code1 \and PACS code2 \and more}
\subclass{35J50 (Primary) \and 37K58, 58E10 (Secondary).}
\end{abstract}

\section{Introduction}
The focus of this paper is to find solutions $\Fq: \R \to \R^k$ to the second order ordinary differential equation
\begin{equation}\label{connections-eq}
\Fq''= \nabla_{\bu} V(\Fq) , \hspace{2mm} \mbox{ in } \R
\end{equation}
verifying the conditions at infinity
\begin{equation}\label{infinity-conditions}
\lim_{t \to \pm \infty}\Fq(t) = \sigma_\pm.
\end{equation}
 If $\sigma_-=\sigma_+$, we say that the solution $\Fq$ is a \textit{homoclinic orbit}. If $\sigma_-\not = \sigma_+$, we say that $\Fq$ is a \textit{heteroclinic orbit}. The function $V$ is a standard multi-well potential. That is, a non negative function vanishing in a finite set $\Sigma$, with non degenerate global minima. The elements $\sigma_-$ and $\sigma_+$ belong to the set $\Sigma$. If $\sigma \in \Sigma$, we say that $\sigma$ is a \textit{well} of $V$. More precisely, $V$ is as follows:
\begin{asu}\label{asu-sigma}
$V \in \CC^2_{\loc}(\R^k)$ and  $V \geq 0$ in $\R^k$. Moreover, $V(\bu) = 0$ if and only if $\bu \in \Sigma$, where, for some $l \geq 2$
\begin{equation}\label{DEF-Sigma}
\Sigma:= \{\sigma_1,\ldots ,\sigma_l\}.
\end{equation}
\end{asu}
\begin{asu}\label{asu-infinity}
There exist $\alpha_0, \beta_0, R_0 > 0$ such that for all $\bu \in \R^k$ with $\lvert \bu \rvert \geq R_0$ it holds $ \langle \nabla_\bu V(\bu), \bu \rangle \geq \alpha_0 \lvert \bu \rvert^2$ and $V(\bu) \geq \beta_0$.
\end{asu}
\begin{asu}\label{asu-zeros}
For all $\sigma \in \Sigma$, the matrix $D^2V(\sigma)$ is positive definite.
\end{asu}
One formally checks that critical points of the functional
\begin{equation}
E(q) := \int_\R e(q)(t) dt := \int_\R \left[ \frac{1}{2}\lvert q'(t) \rvert^2 + V(q(t)) \right]dt, \hspace{2mm} q \in H^1_{\loc}(\R,\R^k),
\end{equation}
solve equation (\ref{connections-eq}). For any $(\sigma_i,\sigma_j) \in \Sigma^2$ we consider as in Rabinowitz \cite{rabinowitz93} the function space
\begin{equation}\label{set-heteroclinic}
X (\sigma_i,\sigma_j):= \left\{ q \in H^1_{\mathrm{loc}}(\R,\R^k): E(q) < +\infty  \mbox{ and } \lim_{t \to - \infty} q(t) = \sigma_i, \lim_{t \to +\infty}q(t)=\sigma_j\right\},
\end{equation}
and seek for critical points inside these spaces, as one easily shows that any finite energy curve in $H_\loc^1(\R,\R^k)$ must belong to $X(\sigma_i,\sigma_j)$ for some $(\sigma_i,\sigma_j) \in \Sigma^2$. We first define the infimum value
\begin{equation}\label{mij}
\Fm_{\sigma_i\sigma_j}:= \inf \{ E(q): q \in X(\sigma_i,\sigma_j)\}.
\end{equation}
The minimization problem in (\ref{mij}) is well understood. Indeed, if $\sigma_i=\sigma_j$, then (\ref{mij}) is attained by the constant curve $\sigma_i$. Otherwise, the problem is more involved but still well known (see Bolotin  \cite{bolotin}, Bolotin and Kozlov \cite{bolotin-kozlov}, Bertotti and Montecchiari \cite{bertotti-montecchiari} and Rabinowitz \cite{rabinowitz89,rabinowitz92}). Its lack of compactness implies that (\ref{mij}) does not always have a solution if $\Sigma$ possesses at least three elements. Let us fix once and for all $(\sigma^-,\sigma^+) \in \Sigma^2$, $\sigma^- \not = \sigma^+$ and set
\begin{equation}\label{Fm}
\Fm:=\mathfrak{m}_{\sigma^-\sigma^+}.
\end{equation}
We will assume that the following strict triangle's inequality holds:
\begin{asu}\label{asu-wells}
We have that
\begin{equation}
\forall \sigma \in \Sigma \setminus \{ \sigma^-,\sigma^+ \}, \hspace{2mm} \Fm < \Fm_{\sigma^-\sigma}+\Fm_{\sigma\sigma^+}.
\end{equation}
\end{asu}
Under assumption \ref{asu-wells}, it is well known that by concentration-compactness arguments (Lions \cite{lions}) there exists a globally minimizing heteroclinic in $X(\sigma^-,\sigma^+)$. See Theorem \ref{THEOREM-0} later for a precise statement.

We finally recall that the Sobolev embeddings imply that curves in $H^1_\loc(\R,\R^k)$ are continuous. This classical fact is used implicitly in the paper.
\subsection{Goal of the paper and statement of the main results}
The goal of this paper is to show that for a class of multi-well potentials $V$, there exist connecting orbits (either heteroclinic or homoclinic) which are not global minimizers in their natural spaces. We obtain several such results using variational methods. In particular our proof is based on a mountain pass argument (see Ambrosetti and Rabinowitz \cite{ambrosetti-rabinowitz}).

There exists a  vast literature concerning the existence of non-minimizing heteroclinics or homoclinic orbits for second order ordinary differential systems using variational methods. Some early references are Ambrosetti and Coti Zelati \cite{ambrosetti-coti zelati}, Coti Zelati and Rabinowitz \cite{coti zelati-rabinowitz}, Rabinowitz \cite{rabinowitz90,rabinowitz93}. Despite this fact, this question had not been addressed for the case of the autonomous multi-well potentials that we consider in this paper. However, the case of time-periodic multi-well potentials has been studied by Montecchiari and Rabinowitz in \cite{montecchiari-rabinowitz18,montecchiari-rabinowitz20} as well as by Bisgard in the second chapter of his PhD Thesis \cite{bisgard}. The present paper deals with a problem which is analogous to that in \cite{bisgard}. It is worth mentioning that while most of Bisgard's technical results also apply to the autonomous problem, his main results ultimately exclude such a possibility. The reason is that his key assumption is never satisfied by autonomous potentials due to the translation invariance of the associated problem. Roughly speaking, our Theorem \ref{THEOREM-general} shows that the ideas and arguments of Bisgard, as well as his key assumption, can be adapted to the autonomous setting. Nevertheless, our strategy and assumptions present some difference with respect to his. A detailed account regarding the main differences and similarities between the proofs is given in subsection \ref{SUBS-previous}.  We also provide the proof of other results, which are Theorems \ref{THEOREM-symmetry} and \ref{THEOREM-symmetry-final}, using for them a symmetry assumption on $V$. These results do not have a counterpart in Bisgard's work. 

Our mountain pass argument is carried out under a multiplicity assumption (up to translations) on the set of globally minimizing heteroclinics joining the two fixed wells $\sigma^-$ and $\sigma^+$. More precisely, the natural idea is to suppose that there exists a gap in the set of global minimizers and consider the family of paths that join two disconnected components. Subsequently, one shows that the associated min-max value is strictly larger than the minimum value, so that the existence of a mountain pass geometry has been established. Examples of earlier papers in which this approach is used are Bolotin and Rabinowitz \cite{bolotin-rabinowitz06,bolotin-rabinowitz07}, de la Llave and Valdinoci \cite{llave-valdinoci} as well as the above mentioned \cite{bisgard,montecchiari-rabinowitz18,montecchiari-rabinowitz20}. In our precise context, we work under assumption \ref{asu-heteroclinics}. This assumption was introduced by Alessio \cite{alessio} and it has been used under different forms for proving existence of solutions for Allen-Cahn systems, see the recent paper by Alessio and Montecchiari \cite{alessio-montecchiari} for a survey. It is the natural generalization of the assumption introduced by Alama, Bronsard and Gui \cite{alama-bronsard-gui} in their celebrated paper concerning entire solutions for two-dimensional Allen-Cahn systems. 

We write $\scrH:= H^1(\R,\R^k)$ and $\scrL:=L^2(\R,\R^k)$. We define
\begin{equation}
\CF:=\{ \Fq: \Fq \in X(\sigma^-,\sigma^+) \mbox{ and } E(\Fq)=\Fm\},
\end{equation}
the set of globally minimizing heteroclinics. The quantity $\Fm$ is as in (\ref{Fm}). The invariance by translations of the problem implies that if $\Fq \in \CF$, then for all $\tau \in \R$ we have $\Fq(\cdot+\tau) \in \CF$. It is well-known (see Lemma \ref{LEMMA-difference}) that $X(\sigma^-,\sigma^+)$ has the structure of an affine space in $H^1_{\loc}(\R,\R^k)$ and it is a metric space when endowed with the natural distance
\begin{equation}\label{distance}
d: (q,\tilde{q})^2 \in X(\sigma^-,\sigma^+) \to \lVert q-\tilde{q} \rVert_{\scrH}.
\end{equation}
We can now state the following assumption:
\begin{asu}\label{asu-heteroclinics}
It holds $\CF:= \CF_0 \cup \CF_1$ where $\CF_0$ and $\CF_1$ are not empty and such that 
\begin{equation}
d(\CF_0,\CF_1) > 0,
\end{equation}
where $d$ is the distance defined in (\ref{distance}).
\end{asu}
As stated before Assumption \ref{asu-heteroclinics} is the gap condition which permits the mountain pass approach. Implicitly, it implies that $k \geq 2$,  as it is well-known that heteroclinics are unique in the scalar case $k=1$. As it was pointed out before, \ref{asu-heteroclinics} was already considered in \cite{alessio} and it generalizes the one made in the previous work \cite{alama-bronsard-gui}. Let us now define
\begin{equation}\label{psi}
\psi(t):= \begin{cases}
\sigma^- &\mbox{ if } t \leq -1,\\
\frac{t+1}{2}\sigma^++\frac{1-t}{2}\sigma^- &\mbox{ if } -1 \leq t\leq 1,\\
\sigma^+ &\mbox{ if } t \geq 1.
\end{cases}
\end{equation}
We have that for all $v \in \scrH$ it holds that $v+\psi \in X(\sigma^-,\sigma^+)$ (see Lemma \ref{LEMMA-difference} for a proof). As in the earlier works \cite{bisgard,montecchiari-rabinowitz18} we define the functional
\begin{equation}\label{FunctionalJ}
J: v \in \scrH \to E(v+\psi) \in \R,
\end{equation}
which presents the advantage of being defined in a linear space. We also point out that the choice of the function $\psi$ is arbitrary.
\subsubsection{The general case}
We set $\CV:=\CF-\{\psi\}$, and for $i \in \{0,1\}$, $\CV_i:= \CF_i-\{\psi\}$. Those are nonempty subsets of $\scrH$. We can now define the mountain pass family:
\begin{equation}\label{Gamma}
\Gamma:= \{ \gamma \in C([0,1],\scrH): \forall i \in \{0,1\},  \gamma(i) \in \CV_i\}
\end{equation}
and the corresponding mountain pass value
\begin{equation}\label{Fc}
\Fc:= \inf_{\gamma \in \Gamma} \max_{s \in [0,1]}J(\gamma(s)) < +\infty.
\end{equation}
In this paper we show that $\Fc>\Fm$ (see Proposition \ref{PROPOSITION-mountain-pass} later). Therefore, $\Fc$ is a \textit{mountain pass value} for $J$. As it is well known, this is generally not sufficient to ensure the existence of new solutions. In order to prove our first result, we will need two more assumptions:
\begin{asu}\label{asu-otherwells}
It holds that $\Fc <\Fm^\star$,where
\begin{equation}\label{Fm_star}
\Fm^\star:=\min\{\Fm_{\sigma^-\sigma}+\Fm_{\sigma\sigma^+}:\sigma \in \Sigma \setminus \{ \sigma^-,\sigma^+\}\}.
\end{equation}
\end{asu}
It is clear that \ref{asu-otherwells} is stronger than \ref{asu-wells} and weaker than $\Sigma=\{\sigma^-,\sigma^+\}$. It is used in order to prevent that curves with energy close to $\Fc$ go trough a well in $\Sigma \setminus \{\sigma^-,\sigma^+\}$, in case there are any.
\begin{asu}\label{asu-K}
There exists a closed set $K \subset \R^k$ such that:
\begin{enumerate}
\item There exists $\nu_0>0$ such that 
\begin{equation}
\forall \Fq \in \CF, \hspace{2mm} \mathrm{dist}(\Fq(\R),K) \geq \nu_0
\end{equation}
where $\mathrm{dist}$ stands for the usual Euclidean distance between two sets in $\R^k$.
\item There exists $M > \Fc$ such that for any $\gamma \in \Gamma$ ($\Gamma$ is defined in (\ref{Gamma})) such that $\max_{s \in [0,1]}J(\gamma(s)) \leq M$, there exists $s_\gamma \in [0,1]$ such that $J(\gamma(s_\gamma)) \geq \Fc$ and $(\gamma(s_\gamma)+\psi)(\R) \cap K \not = \emptyset$, where $\Fc$ is the mountain pass value defined in (\ref{Fc}).
\end{enumerate}
\end{asu}
 Assumption \ref{asu-K} is more technical and as we show in Lemma \ref{LEMMA-3m} it is satisfied if $\Fc \not \in \{(2j+1)\Fm:j \in \N^*\}$, or more particularly if $\Fc < 3\Fm$. An analogous assumption was made by Bisgard in \cite{bisgard} with the same purpose. The comparison is made in subsection \ref{SUBS-previous}. Our first result then is as follows:
\begin{theorem}\label{THEOREM-general}
Assume that \ref{asu-sigma}, \ref{asu-infinity}, \ref{asu-zeros}, \ref{asu-heteroclinics}, \ref{asu-otherwells} and \ref{asu-K} hold. Then, there exists $\Fu \in H^1_{\loc}(\R,\R^k) \cap \CC^2(\R,\R^k)$ a solution of (\ref{connections-eq}) that satisfies one of the two following conditions:
\begin{enumerate}
\item $\Fu$ is not constant, $E(\Fu) \leq \Fc$ and $\Fu$ is \textit{homoclinic} to $\sigma^-$ or $\sigma^+$, that is, there exists $\sigma \in \{ \sigma^-,\sigma^+\}$ such that
\begin{equation}
\lim_{t \pm \infty} \Fu(t) = \sigma.
\end{equation}
\item $\Fu \in X(\sigma^-,\sigma^+)$ and $\Fc \geq E(\Fu) > \Fm$.
\end{enumerate}
Moreover, $\Fu(0) \in K$.
\end{theorem}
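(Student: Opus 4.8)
The plan is to run the Mountain Pass Lemma for the functional $J$ on the Hilbert space $\scrH$, working with the family $\Gamma$ and the min-max value $\Fc$, and then to analyze the resulting critical point (or rather, the limit of a Palais–Smale sequence) using a concentration-compactness dichotomy. First I would record that the mountain pass geometry genuinely holds: by the (already announced) Proposition \ref{PROPOSITION-mountain-pass} we have $\Fc>\Fm$, while $\CV_0,\CV_1$ lie in distinct energy-separated components of the sublevel set $\{J\le\Fm\}$, and $\Fc<+\infty$ is finite. A quantitative deformation lemma then produces a Palais–Smale sequence $(v_n)\subset\scrH$ for $J$ at the level $\Fc$, i.e.\ $J(v_n)\to\Fc$ and $J'(v_n)\to0$ in $\scrH^*$; moreover, using assumption \ref{asu-K}(2) together with a standard localization argument (choosing the deformation so that it does not increase the max along paths, hence staying below the level $M$), one can arrange that each $v_n$ comes from a near-optimal path and satisfies $(v_n+\psi)(\R)\cap K\neq\emptyset$; after a translation in $t$ — which leaves $E$ invariant, though not $J$, so this step requires passing back to $\Fq_n:=v_n+\psi$ and re-centering — we may assume $\Fq_n(0)\in K$ (or at least within $o(1)$ of $K$).

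Next I would establish compactness properties of $(\Fq_n)$. Assumptions \ref{asu-infinity} and \ref{asu-zeros} give the standard a priori bounds: finite energy plus the coercivity-type condition $\langle\nabla V(\bu),\bu\rangle\ge\alpha_0|\bu|^2$ for $|\bu|\ge R_0$ forces $\|\Fq_n\|_{L^\infty}$ to be bounded, and then $\|\Fq_n'\|_{L^2}$ and (via the equation and elliptic regularity) $\|\Fq_n\|_{C^2_{\loc}}$ are bounded, so up to a subsequence $\Fq_n\to\Fu$ in $C^2_{\loc}(\R,\R^k)$ with $\Fu$ a classical solution of \eqref{connections-eq}; since $K$ is closed, $\Fu(0)\in K$. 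The key dichotomy is whether energy is lost at infinity. I would split $E(\Fq_n)$ over $(-\infty,-T)$, $[-T,T]$, $(T,+\infty)$ and use the PS condition to show that on each piece the "tails" either vanish or concentrate a definite amount of energy, comparable to the $\Fm_{\sigma_i\sigma_j}$'s, while the central part converges to $E(\Fu)$ restricted to $[-T,T]$. The upshot: $\Fu$ is a finite-energy solution lying in some $X(\sigma_i,\sigma_j)$, and $E(\Fu)$ equals $\Fc$ minus the total energy carried away by "bubbles" — finitely many translated finite-energy solutions. Assumption \ref{asu-otherwells}, $\Fc<\Fm^\star$, is exactly what forbids any bubble (or $\Fu$ itself) from being a heteroclinic chain passing through a third well, since such a chain costs at least $\Fm^\star>\Fc$; combined with \ref{asu-wells} this pins down the admissible endpoints of $\Fu$ to $\{\sigma^-,\sigma^+\}$.

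It then remains to rule out the trivial outcomes and sort into the two cases of the statement. If $\Fu$ is constant it must be a well, and since $\Fu(0)\in K$ while \ref{asu-K}(1) says $\mathrm{dist}(\Fq(\R),K)\ge\nu_0>0$ for all \emph{globally minimizing} heteroclinics, a constant well at distance $0$ from $K$ would have to coincide with $\sigma^-$ or $\sigma^+$ — but those \emph{are} endpoints of minimizers in $\CF$, so the distance constraint is violated unless... here is where I would be careful: the constant-well case is excluded precisely because $\Fu(0)\in K$ and $\sigma^\pm\notin K$ (as $\sigma^\pm\in\Fq(\R)$ for $\Fq\in\CF$, forcing $\mathrm{dist}(\sigma^\pm,K)\ge\nu_0$). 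So $\Fu$ is non-constant. If $\Fu$ is homoclinic to $\sigma^-$ or $\sigma^+$, the energy bound $E(\Fu)\le\Fc$ (energy is only lost, never gained, in the limit) gives case (1). If $\Fu\in X(\sigma^-,\sigma^+)$, then $E(\Fu)\le\Fc$; to get the strict lower bound $E(\Fu)>\Fm$ I would argue that if $E(\Fu)=\Fm$ then $\Fu\in\CF$, contradicting $\mathrm{dist}(\Fu(\R),K)\ge\nu_0$ versus $\Fu(0)\in K$; this gives case (2).

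The main obstacle I anticipate is the bookkeeping in the concentration-compactness step: one must show that no energy concentrates except into genuine connecting-orbit "bubbles", identify their endpoints compatibly with the $C^2_{\loc}$ limit $\Fu$, and — crucially — show that the $K$-constraint survives the translation/re-centering so that the \emph{retained} limit (not a discarded bubble) is the one passing through $K$. This is exactly the delicate interplay that assumption \ref{asu-K}(2) is designed to handle (it forces the $K$-crossing to occur at a "high" point of the path, energy $\ge\Fc$, which cannot be sacrificed to a bubble without violating $\Fc<\Fm^\star$ or the mountain-pass level), and getting this argument clean will be the crux of the proof.
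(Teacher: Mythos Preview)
Your proposal is correct and follows essentially the same architecture as the paper: establish the mountain pass geometry, use a quantitative deformation lemma together with \ref{asu-K}(2) to produce a Palais--Smale sequence at level $\Fc$ whose elements pass through $K$, translate so that the crossing occurs at $t=0$, extract a local limit $\Fu$ solving \eqref{connections-eq} with $\Fu(0)\in K$, and finally use \ref{asu-otherwells} and \ref{asu-K}(1) to classify the endpoints and rule out the trivial cases.

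The one substantive difference is in how the endpoints of $\Fu$ are identified. You propose a full concentration-compactness splitting with bubble accounting, showing $E(\Fu)=\Fc-\sum(\text{bubble energies})$ and then arguing that $\Fc<\Fm^\star$ forbids any piece of the chain from touching a third well. The paper takes a shorter route: it never tracks where the lost energy goes. Instead it invokes a pointwise lemma (Lemma~\ref{LEMMA-3rdwell}) stating that any $q\in X(\sigma^-,\sigma^+)$ with $E(q)<\Fm^\star$ stays uniformly $\rho_2$-away from every $\sigma\in\Sigma\setminus\{\sigma^-,\sigma^+\}$; this bound passes to the limit by local convergence, so $\Fu$ itself avoids third wells, and then Lemma~\ref{LEMMA-limits-V} (finite energy $\Rightarrow$ limits in $\Sigma$) forces $\lim_{t\to\pm\infty}\Fu(t)\in\{\sigma^-,\sigma^+\}$ directly. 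Your bubble analysis would also work and yields sharper energy information, but the paper's argument is more economical for what the statement actually asserts (only $E(\Fu)\le\Fc$, not equality). One small slip: you cite \ref{asu-wells} where you mean \ref{asu-otherwells}.
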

That is, Theorem \ref{THEOREM-general} shows that, under the previous assumptions, there exists a non-minimizing solution which might be either heteroclinic or homoclinic. As it will be made clear later, Theorem \ref{THEOREM-general} is strongly related to \textit{Theorem 2.3} by Bisgard \cite{bisgard}.
\begin{remark}\label{REMARK-eta_min}
Following the arguments by Bisgard \cite{bisgard} which give rise to his \textit{Theorem 2.2}, we also have that there exists a (possibly small) constant $\eta_{\min}>0$ such that if $\Fc<\Fm+\eta_{\min}$ ($\eta_{\min}<\Fm$), then $\Fu$ is heteroclinic and $J(\Fu)=\Fc$. See Corollary \ref{COROLLARY_existence_principle}.
\end{remark}
\subsubsection{The symmetric case}
In \cite{alama-bronsard-gui}, Alama, Bronsard and Gui considered potentials which are symmetric with respect to a reflection:
\begin{asu}\label{asu-symmetry}
We have that $\sigma^-= (-1,0,\ldots,0)$ and $\sigma^+=(+1,0,\ldots,0)$. Moreover, we have for all $\bu \in \R^k$, $V(\Fs(\bu))=V(\bu)$, where
\begin{equation}
\Fs: \bu=(u_1, u_2, \ldots, u_k) \in \R^k \to (-u_1, u_2, \ldots, u_k) \in \R^k.
\end{equation}
\end{asu}
Such condition eliminates the degeneracy due to invarance by translations and, hence, allows to restore some compactness. The first remark is that condition \ref{asu-symmetry} allows to look for solutions which belong to the equivariant space:
\begin{equation}
X_{\mathrm{sym},+}:= \{ q \in X(\sigma^-,\sigma^+): \forall t \geq 0, \hspace{1mm} q_1(t)\geq 0 \mbox{ and } \Fs(q(t))=q(-t)  \}.
\end{equation}
The purpose of the symmetry assumption \ref{asu-symmetry} is to replace  \ref{asu-K} in order to obtain a slightly better result. Moreover, we show that the combination of both hypothesis permits to ensure the existence of a non-minimizing heteroclinic in $X(\sigma^-,\sigma^+)$, while the general setting of Theorem \ref{THEOREM-general} does not allow us to claim such a thing (see however Remark \ref{REMARK-eta_min}). Firstly, we recall that assumption \ref{asu-symmetry} shows that energy decreases by symmetrization, see Lemma \ref{Lemma-Xsym} later. Therefore, we have that the sets
\begin{equation}\label{CFsym}
\forall i \in \{0,1\}, \hspace{2mm} \CF_{\sym,i}:=\CF_{i} \cap X_{\sym,+}
\end{equation}
are non-empty by \ref{asu-heteroclinics}. Moreover, $d(\CF_{\sym,0},\CF_{\sym,1}) \geq d(\CF_0,\CF_1) > 0$, again by assumption \ref{asu-heteroclinics}. We write $\CF_\sym:=\CF \cap X_{\sym,+}$, notice that $\CF_\sym=\CF_{\sym,0}\cup \CF_{\sym,1}$. We see that the function $\psi$ defined in (\ref{psi}) belongs to $X_{\sym,+}$. Hence, we can do as before and define:
\begin{equation}\label{H_sym}
\scrH_{\sym}:= \{ v \in \scrH: \forall t \geq 0, \hspace{1mm} \Fs(v(t))=v(-t) \},
\end{equation}
which is a closed subspace of $\scrH$, thus we will regard it as a Hilbert space itself. Notice that by Lemma \ref{LEMMA-difference} and the linearity of the symmetry, we have as before that $\{\psi\}+\scrH_\sym=X_\sym$. We set $\CV_\sym := \CF_\sym-\{\psi\}$ and for $i \in \{0,1\}$
\begin{equation}\label{CVsymi}
\CV_{\sym,i}:= \CF_{\sym,i}-\{\psi\},
\end{equation}
which are subsets of $\scrH$.  We now have all the ingredients to define the \textit{symmetric mountain pass family}
\begin{equation}\label{Gamma-sym}
\Gamma_\sym:= \{ \gamma \in C([0,1],\scrH_\sym): \forall i \in \{0,1\}, \gamma(i) \in \CV_{\sym,i}\}.
\end{equation}
As we will see later, the possibility of considering only the paths contained in $\scrH_\sym$ will be the key of our argument. Now, define the corresponding mountain pass value:
\begin{equation}\label{Fc-sym}
\Fc_\sym:= \inf_{\gamma \in \Gamma_\sym} \max_{s \in [0,1]}J(\gamma(s)) < +\infty.
\end{equation}
As before, we show that $\Fc_\sym>\Fm$ (Proposition \ref{PROPOSITION-sym-MP}). Subsequently, we write the analogous of \ref{asu-otherwells} for $\Fc_\sym$:
\begin{asu}\label{asu-otherwells-sym}
It holds that $\Fc_\sym < \Fm^\star$, where $\Fm^\star$ is introduced in (\ref{Fm_star}).
\end{asu}
We can finally state the first result in the symmetric setting:
\begin{theorem}\label{THEOREM-symmetry}
Assume that \ref{asu-sigma}, \ref{asu-infinity}, \ref{asu-zeros}, \ref{asu-heteroclinics}, \ref{asu-symmetry} and \ref{asu-otherwells-sym} hold. Then, we have one of the two following scenarios:
\begin{enumerate}
\item There exist $\Fu_+$ and $\Fu_-$ in $H^1_{\loc}(\R,\R^k) \cap \CC^2(\R,\R^k)$ two non constant functions such that $E(\Fu_+) \leq \Fc_\sym$, $\lim_{t \to \pm \infty}\Fu_+(t)=\sigma^+$ and $\Fu_-$ is obtained by reflecting $\Fu_+$, that is
\begin{equation}
\forall t \in \R, \hspace{2mm} \Fu_-(t)=\Fs(\Fu_+(t)).
\end{equation}
In particular, $\lim_{t \to \pm \infty} \Fu_-(t)=\sigma^-$.
\item There exists $\Fu \in X_{\sym,+}$ such that $E(\Fu)=\Fc_\sym$. In particular, $\Fu \not \in \CF_\sym$.
\end{enumerate}
\end{theorem}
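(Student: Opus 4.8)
The strategy is a mountain pass argument for $J$ restricted to the closed subspace $\scrH_\sym$. By the principle of symmetric criticality, critical points of $J|_{\scrH_\sym}$ are critical points of $J$ on $\scrH$, hence (after adding back $\psi$) solutions of (\ref{connections-eq}). The geometry is already available: Proposition \ref{PROPOSITION-sym-MP} gives $\Fc_\sym>\Fm$, while $J\equiv\Fm$ on the endpoint sets $\CV_{\sym,0}$ and $\CV_{\sym,1}$, which sit at positive mutual distance by \ref{asu-heteroclinics}. First I would run the standard deformation argument on $\scrH_\sym$: were $J|_{\scrH_\sym}$ to have no Palais--Smale sequence at level $\Fc_\sym$, its negative pseudo-gradient flow would deform an almost optimal path in $\Gamma_\sym$ into one with $\max_s J<\Fc_\sym$ while fixing the endpoints (which lie in the sublevel set $\{J\le\Fc_\sym-\eps\}$), contradicting the definition of $\Fc_\sym$. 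Hence there is a sequence $(v_n)\subset\scrH_\sym$ with $J(v_n)\to\Fc_\sym$ and $J'(v_n)\to 0$ in the dual; set $q_n:=v_n+\psi\in X_{\sym,+}$, a sequence of approximate solutions of (\ref{connections-eq}) with $E(q_n)\to\Fc_\sym$.

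The heart of the proof is a concentration--compactness analysis of $(q_n)$, resting on two structural facts. First, the two constraints defining $X_{\sym,+}$: the reflection $\Fs(q_n(t))=q_n(-t)$ and the positivity $q_{n,1}(t)\ge 0$ for $t\ge 0$. In particular $q_n(0)$ lies on the hyperplane $\{u_1=0\}$, which contains neither $\sigma^-$ nor $\sigma^+$; this anchors a non-translating ``core'' (the whole curve cannot run off to $\pm\infty$), while the reflection forces the escaping behaviour at $+\infty$ and at $-\infty$ to be mirror images of one another. Second, \ref{asu-otherwells-sym}, i.e.\ $\Fc_\sym<\Fm^\star$, which, exactly as in the compactness analysis underlying Theorem \ref{THEOREM-general}, forbids configurations of total energy $\le\Fc_\sym$ that pass through a well of $\Sigma\setminus\{\sigma^-,\sigma^+\}$. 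Passing to a subsequence one then obtains the dichotomy: either $q_n$ converges strongly in $H^1_{\loc}$ to a limit $\Fu$ with $E(q_n)\to E(\Fu)$ and no energy lost at infinity, or a nonconstant solution of (\ref{connections-eq}) splits off along translations $t_n\to+\infty$ (and, by the reflection, its mirror along $-t_n$), the energy being additive in the limit.

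In the first case the two constraints pass to the limit, so $\Fu\in X_{\sym,+}$, and since no mass escapes $\Fu(\pm\infty)=\sigma^\pm$; moreover $\Fu$ solves (\ref{connections-eq}) and $E(\Fu)=\Fc_\sym>\Fm$, so $\Fu\notin\CF\supset\CF_\sym$: this is conclusion (2). In the second case, let $\Fu_+$ be the bubble at $t_n\to+\infty$, i.e.\ $q_n(\cdot+t_n)\to\Fu_+$ in $\CC^2_{\loc}$; then $\Fu_+$ solves (\ref{connections-eq}), is nonconstant and has finite energy, hence tends at $\pm\infty$ to wells of $V$. Since $t_n\to+\infty$, where $q_{n,1}\ge 0$ and $q_n\to\sigma^+$, this bubble lies asymptotically in $\{u_1\ge 0\}$; using \ref{asu-otherwells-sym} to exclude third wells, the positivity constraint to exclude $\sigma^-$, and the observation that a well on $\{u_1=0\}$ as the core's right endpoint would, via the reflection, force the escaping pieces to connect it to both $\sigma^-$ and $\sigma^+$ at cost $\ge\Fm^\star>\Fc_\sym$, one concludes that the anchored core is a nonconstant symmetric heteroclinic from $\sigma^-$ to $\sigma^+$ and that $\Fu_+$ is homoclinic to $\sigma^+$, with $E(\Fu_+)\le(\Fc_\sym-\Fm)/2<\Fc_\sym$ by energy additivity. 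Finally $\Fu_-(t):=\Fs(\Fu_+(t))$ solves (\ref{connections-eq}) because $\nabla_\bu V$ is $\Fs$-equivariant by \ref{asu-symmetry}, is nonconstant, and is homoclinic to $\Fs(\sigma^+)=\sigma^-$: this is conclusion (1). In both cases, $\CC^2$ regularity and the exponential approach to the wells follow from bootstrapping in (\ref{connections-eq}) and the nondegeneracy \ref{asu-zeros}.

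The main obstacle is the concentration--compactness step: excluding vanishing, tracking the energy budget, and --- above all --- proving that an escaping bubble must be a nonconstant homoclinic to $\sigma^+$ or $\sigma^-$, rather than a heteroclinic fragment or a bubble through a third well. This is precisely where both constraints of $X_{\sym,+}$ and assumption \ref{asu-otherwells-sym} are indispensable, and it is where the argument departs most from that of Theorem \ref{THEOREM-general}, in which the anchoring role is played by the set $K$ of \ref{asu-K} rather than by the symmetry.
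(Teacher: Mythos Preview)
Your overall strategy coincides with the paper's, but there is a genuine gap at the very first step: you assert that the Palais--Smale sequence satisfies $q_n:=v_n+\psi\in X_{\sym,+}$, and the rest of your argument (exclusion of $\sigma^-$ as a limit of the escaping bubble, and the conclusion $\Fu\in X_{\sym,+}$ in the compact case) depends on this. This is false. The space $\scrH_\sym$ (see (\ref{H_sym})) only encodes the reflection symmetry $\Fs(v(t))=v(-t)$; the positivity constraint $q_1(t)\ge 0$ for $t\ge 0$ is \emph{not} linear and therefore does not define a closed linear subspace on which you can run the standard deformation argument or invoke the principle of symmetric criticality. A plain mountain pass on $\scrH_\sym$ only produces $q_n\in X_\sym$, and without the sign information the bubble at $t_n\to+\infty$ may well be a globally minimizing heteroclinic in $\CF$ (energy $\Fm$), yielding neither conclusion (1) nor (2).

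The paper closes this gap as follows. It constructs a continuous, energy non-increasing symmetrization $F_+:X_\sym\to X_{\sym,+}$ that fixes $X_{\sym,+}$ (Lemma \ref{LEMMA-FA}), and deduces that for every $\gamma\in\Gamma_\sym$ the composed path $h_\sym\circ\gamma$ is again in $\Gamma_\sym$ with $\max_s J$ not increased (Lemma \ref{LEMMA-hsym}). One then applies a refined mountain pass lemma (\textit{Corollary 4.3} in \cite{mawhin-willem}) that produces a Palais--Smale sequence asymptotically close to a prescribed minimizing sequence of paths; choosing that sequence in the image of $h_\sym$ yields Proposition \ref{PROPOSITION-sym-+}: a Palais--Smale sequence $(u_n)\subset\scrH_\sym$ with $d(u_n+\psi,X_{\sym,+})\to 0$. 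It is precisely this approximation property (not membership) that provides (\ref{SYM1-1-1}) and lets one pass both constraints to the limit. A secondary issue: in the dichotomy case the paper only proves $E(\Fu_+)\le\Fc_\sym$; your sharper bound $E(\Fu_+)\le(\Fc_\sym-\Fm)/2$, which rests on identifying the anchored core as a genuine heteroclinic, is not established and is not needed for the statement.
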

\begin{remark}
Notice that in the first case in Theorem \ref{THEOREM-symmetry}, the solution $\Fu_-$ is obtained for free from $\Fu_+$. Indeed, it suffices to check that, due to \ref{asu-symmetry}, any $\Fq$ solution of (\ref{connections-eq}) gives rise to a reflected solution $\hat{\Fq}$ defined as $\hat{\Fq}: t \in \R \to \Fs(\Fq(t))$.
\end{remark}
Finally, we show that under an assumption which combines \ref{asu-K} and \ref{asu-symmetry} we can be sure to obtain a non-minimizing heteroclinic joining $\sigma^-$ and $\sigma^+$. Such assumption writes as follows:
\begin{asu}\label{asu-finalK}
Assumption \ref{asu-symmetry} holds. Moreover, there exists a closed set $K_\sym \subset \R^k$ such that:
\begin{enumerate}
\item There exists $\nu_0>0$ such that 
\begin{equation}
\forall \Fq \in \CF_\sym, \hspace{2mm} \mathrm{dist}(\Fq(0),K_\sym) \geq \nu_0
\end{equation}
where $\mathrm{dist}$ stands for the usual Euclidean distance between two sets in $\R^k$.
\item Let $\Gamma_\sym$ be as in (\ref{Gamma-sym}) and $\Fc_\sym$ be as in (\ref{Fc-sym}). There exists $M > \Fc_\sym$ such that for any $\gamma_+ \in \Gamma_\sym$ with $(\psi+\gamma_+)([0,1]) \subset X_{\sym,+}$ and $\max_{s \in [0,1]}J(\gamma_+(s)) \leq M$,
there exists $s_\gamma \in [0,1]$ such that $J(\gamma_+(s_\gamma)) \geq \Fc_\sym$ and $\gamma_+(s_\gamma)(0) \in K_\sym$.
\end{enumerate}
\end{asu}
Assumption \ref{asu-finalK} is nothing but the symmetric version of \ref{asu-K}. Notice that we also need to ask that $\gamma(s_\gamma)(0) \in K_\sym$, which is stronger than the condition $(\gamma(s_\gamma)+\psi)(\R) \cap K \not = \emptyset$ required in \ref{asu-K}. We can then state the following result:
\begin{theorem}\label{THEOREM-symmetry-final}
Assume that \ref{asu-sigma}, \ref{asu-infinity}, \ref{asu-zeros}, \ref{asu-heteroclinics},  \ref{asu-otherwells-sym} and \ref{asu-finalK} hold. Then, there exists a solution $\Fu \in X_{\sym,+}$ such that $\Fc_\sym \geq E(\Fu) > \Fm$.
\end{theorem}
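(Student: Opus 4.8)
The plan is to run the symmetric mountain pass scheme for $J$ on $\scrH_\sym$ while forcing the relevant curves to meet the obstacle $K_\sym$ of \ref{asu-finalK} at $t=0$, and then to identify the min-max critical point as a non-minimizing heteroclinic in $X_{\sym,+}$. First I would reduce the competitors in \eqref{Fc-sym} to paths $\gamma_+\in\Gamma_\sym$ with $(\psi+\gamma_+)([0,1])\subset X_{\sym,+}$: given $\gamma\in\Gamma_\sym$, replacing the curve $\psi+\gamma(s)\in X_\sym$ on $[0,+\infty)$ by the curve with first component $\lvert(\psi+\gamma(s))_1\rvert$ and the other components unchanged, then extending it to $(-\infty,0]$ by $\Fs$, produces such a path with unchanged endpoints and unchanged energy at every $s$; this uses that \ref{asu-symmetry} makes $V$ even in $u_1$ (as in Lemma \ref{Lemma-Xsym}), that $w\mapsto\lvert w\rvert$ is continuous on $H^1(\R)$, and that $(\psi+\gamma(s))_1$ is odd, hence vanishes at $0$. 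So the restricted infimum is still $\Fc_\sym$. Next, $J$ is invariant under the isometric involution $v\mapsto\Fs(v(-\cdot))$ of $\scrH$, which fixes $\psi$; since the gradient of an invariant functional maps the fixed subspace into itself, the principle of symmetric criticality shows that critical points and Palais--Smale sequences of $J|_{\scrH_\sym}$ are critical points and Palais--Smale sequences of $J$ on $\scrH$. By \ref{asu-heteroclinics} the set $\Gamma_\sym$ is nonempty and the endpoint sets $\CV_{\sym,0},\CV_{\sym,1}$ are disjoint with $J\equiv\Fm$ there, while $\Fm<\Fc_\sym<\Fm^\star$ by Proposition \ref{PROPOSITION-sym-MP} and \ref{asu-otherwells-sym}; thus $\Fc_\sym$ is a genuine (two-set) mountain pass level. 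Applying a refined mountain pass lemma of the kind underlying Theorem \ref{THEOREM-general}, with the obstacle supplied by part~2 of \ref{asu-finalK}, I obtain $(v_n)\subset\scrH_\sym$ with $J(v_n)\to\Fc_\sym$, $\nabla J(v_n)\to0$ in $\scrH$, and $\mathrm{dist}\big((\psi+v_n)(0),K_\sym\big)\to0$, where $(\psi+v_n)(0)=v_n(0)$ because $\psi(0)=\tfrac12(\sigma^-+\sigma^+)=0$ by \ref{asu-symmetry}.

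\textbf{Passing to the weak limit.} By \ref{asu-infinity}, the bound on $J(v_n)$ together with $\nabla J(v_n)\to0$ forces $q_n:=\psi+v_n$ to be bounded in $L^\infty(\R,\R^k)$ and $(v_n)$ to be bounded in $\scrH$. Up to a subsequence, $v_n\rightharpoonup v$ in $\scrH$, $q_n\to q:=\psi+v$ locally uniformly and pointwise, $q_n'\rightharpoonup q'$ in $\scrL$. Passing to the limit in $\nabla J(v_n)\to0$ tested against $C^\infty_c$ functions (and using \ref{asu-zeros}), $q\in\CC^2(\R,\R^k)$ solves \eqref{connections-eq}; weak lower semicontinuity gives $E(q)\le\liminf_n E(q_n)=\Fc_\sym<+\infty$, whence $q(\pm\infty)\in\Sigma$ by the usual asymptotic analysis of finite-energy solutions. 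The closed conditions $\Fs(q(t))=q(-t)$ and $q_1(t)\ge0$ for $t\ge0$ pass to the local uniform limit, and $q(0)=\lim_n q_n(0)\in K_\sym$ since $K_\sym$ is closed and $\mathrm{dist}(q_n(0),K_\sym)\to0$.

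\textbf{Identification of $q$.} Here I would invoke the concentration-compactness analysis already used for Theorem \ref{THEOREM-0} and Theorem \ref{THEOREM-symmetry}. If $v_n\to v$ strongly in $\scrH$, then $v$ vanishes at $\pm\infty$, so $q(-\infty)=\sigma^-$, $q(+\infty)=\sigma^+$, i.e. $q\in X_{\sym,+}$, with $E(q)=\Fc_\sym$. Otherwise a positive amount of energy escapes to $\pm\infty$ and one obtains a decomposition $\Fc_\sym=E(q)+(\text{escaped energy})$ whose escaped part is a family of connecting-orbit ``bubbles'' arranged symmetrically about $t=0$; comparing $E(q)$ plus twice the energy of the right-hand chain of bubbles against $\Fm^\star$, via the subadditivity $\Fm_{\sigma_a\sigma_c}\le\Fm_{\sigma_a\sigma_b}+\Fm_{\sigma_b\sigma_c}$, the symmetry $\Fm_{\sigma_a\sigma_b}=\Fm_{\Fs(\sigma_a)\Fs(\sigma_b)}$, the oddness of $q_1$ and $q_1\ge0$ on $[0,+\infty)$, every scenario except ``$q(+\infty)=\sigma^+$, with the escape consisting only of homoclinic excursions near $\pm\infty$'' is seen to force $\Fc_\sym\ge\Fm^\star$, contradicting \ref{asu-otherwells-sym}; so once more $q\in X_{\sym,+}$, now with $E(q)\le\Fc_\sym$. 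In every surviving case $q\in X_{\sym,+}$ and $E(q)\le\Fc_\sym$, while $q(0)\in K_\sym$ together with part~1 of \ref{asu-finalK} gives $q\notin\CF_\sym$; since any element of $X_{\sym,+}$ with energy $\Fm$ belongs to $\CF_\sym$, we get $E(q)>\Fm$. Hence $\Fu:=q$ satisfies $\Fc_\sym\ge E(\Fu)>\Fm$, as required.

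\textbf{Main obstacle.} The hard part is the identification in Step~3: making the concentration-compactness splitting precise in this symmetric, non-minimizing setting and verifying that the obstacle $K_\sym$ — combined with $\Fc_\sym<\Fm^\star$ — rules out the homoclinic-to-$\sigma^\pm$ alternative that persists in Theorem \ref{THEOREM-symmetry}, intuitively because any such splitting would force the central piece $q$ either to be constant or to connect a third well to its $\Fs$-image, both incompatible with $q(0)\in K_\sym$ and with \ref{asu-otherwells-sym}. The remaining ingredients — the a priori bounds from \ref{asu-infinity}, symmetric criticality, the reduction to $X_{\sym,+}$-valued paths, and the refined mountain pass lemma delivering the localized Palais--Smale sequence — are of the same nature as tools established earlier in the paper (cf. the mechanism behind Corollary \ref{COROLLARY_existence_principle}).
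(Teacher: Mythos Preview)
Your overall strategy matches the paper: construct a Palais--Smale sequence in $\scrH_\sym$ that is localized at $t=0$ via the obstacle $K_\sym$ (this is the paper's Proposition \ref{PROPOSITION-PS2}, obtained from Lemma \ref{LEMMA_Willem} together with the reduction to $X_{\sym,+}$-valued paths in Lemma \ref{LEMMA-hsym}), pass to a weak limit $\Fu$ without translating, and then use $\Fu(0)\in K_\sym$ and part~1 of \ref{asu-finalK} to conclude $\Fu\notin\CF_\sym$, hence $E(\Fu)>\Fm$.

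Where you diverge from the paper --- and make life much harder than necessary --- is Step~3. You already have, from Step~2, the three decisive facts: (a) $\Fu(-t)=\Fs(\Fu(t))$; (b) $\Fu_1(t)\ge 0$ for $t\ge 0$; and (c) via Lemma \ref{LEMMA-3rdwell} and \ref{asu-otherwells-sym}, $\lvert\Fu(t)-\sigma\rvert\ge\rho_2$ for every $\sigma\in\Sigma\setminus\{\sigma^-,\sigma^+\}$. Since $E(\Fu)<\infty$, Lemma \ref{LEMMA-limits-V} forces $\Fu(+\infty)\in\Sigma$, hence $\Fu(+\infty)\in\{\sigma^-,\sigma^+\}$ by~(c). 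But $\sigma^-_1=-1<0$ while $\Fu_1\ge 0$ on $[0,+\infty)$ by~(b), so $\Fu(+\infty)=\sigma^+$; then~(a) gives $\Fu(-\infty)=\Fs(\sigma^+)=\sigma^-$. Thus $\Fu\in X_{\sym,+}$ \emph{directly}, with no concentration--compactness, no bubble decomposition, and no case analysis. This is exactly the paper's argument.

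In particular, the ``main obstacle'' you identify --- ruling out a homoclinic-to-$\sigma^\pm$ alternative as in Theorem \ref{THEOREM-symmetry} --- is illusory. In the proof of Theorem \ref{THEOREM-symmetry} the homoclinic arises only \emph{after translating} the sequence by $t_n\to\infty$, which destroys the $\Fs$-equivariance. Here the localization at $t=0$ supplied by $K_\sym$ means no translation is needed; the limit inherits the $\Fs$-equivariance, and (a)--(b) then preclude a homoclinic automatically. Your claim that ``every scenario except \ldots forces $\Fc_\sym\ge\Fm^\star$'' is both unnecessary and dubious as stated (chains of back-and-forth heteroclinics between $\sigma^\pm$ need not involve $\Fm^\star$), but it never has to be checked.
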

\begin{remark}
Notice that the hypothesis of Theorem \ref{THEOREM-symmetry} are contained in those of Theorem \ref{THEOREM-symmetry-final}. Therefore, if $\Fu$ is the solution given by Theorem \ref{THEOREM-symmetry-final}, then by Theorem \ref{THEOREM-symmetry} either $E(\Fu)=\Fc_\sym$ or there exist $\Fu_+, \Fu_-$ the pair of non constant homoclinics.
\end{remark}

The paper is organized as follows: Section \ref{section-sketch} is devoted to the proofs of Theorems \ref{THEOREM-general}, \ref{THEOREM-symmetry} and \ref{THEOREM-symmetry-final}. Section \ref{section-asu} is devoted to some comments and results regarding the assumptions \ref{asu-K} and \ref{asu-finalK}.
\section{Proofs of the results}\label{section-sketch}
This section is devoted to the proofs of Theorems \ref{THEOREM-general}, \ref{THEOREM-symmetry} and \ref{THEOREM-symmetry-final}. The organisation goes as follows: In subsection \ref{SUBS-previous}, we give the overall scheme of the proofs and compare it with the previous literature. In subsection \ref{SUBS-Preliminary}, we state the preliminary results which are needed, most of which are well-known. In subsection \ref{SUBS-MP}, we prove the existence of the mountain pass geometry. In subsection \ref{SUBS-deformation}, we state an abstract deformation result from Willem \cite{willem} which is used after. In subsection \ref{SUBS-general}, we provide the proof of Theorem \ref{THEOREM-general}. Finally, subsection \ref{SUBS-SYM} is devoted to the proofs of Theorems \ref{THEOREM-symmetry} and \ref{THEOREM-symmetry-final}.

\begin{figure}[h!]
\centering
\includegraphics[scale=0.85]{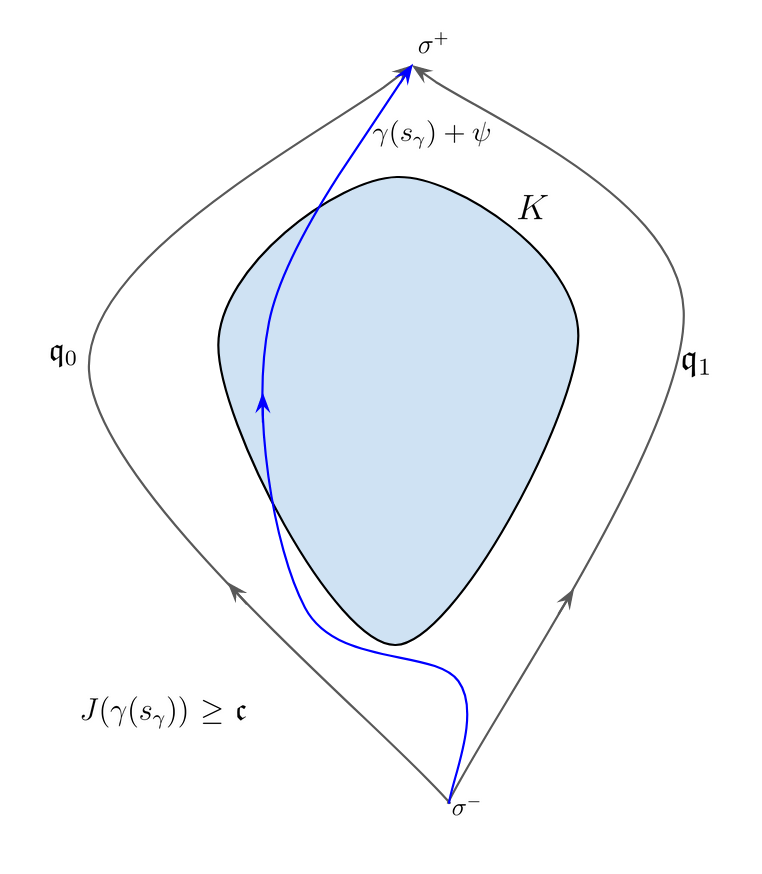}
\caption{Illustration of \ref{asu-K} for the particular case of a potential with exactly two distinct (up to translations) globally minimizing heteroclinics ($\Fq_0$ and $\Fq_1$) between $\sigma^-$ and $\sigma^+$.}
\end{figure}

\begin{figure}[h!]
\centering
\includegraphics[scale=0.4]{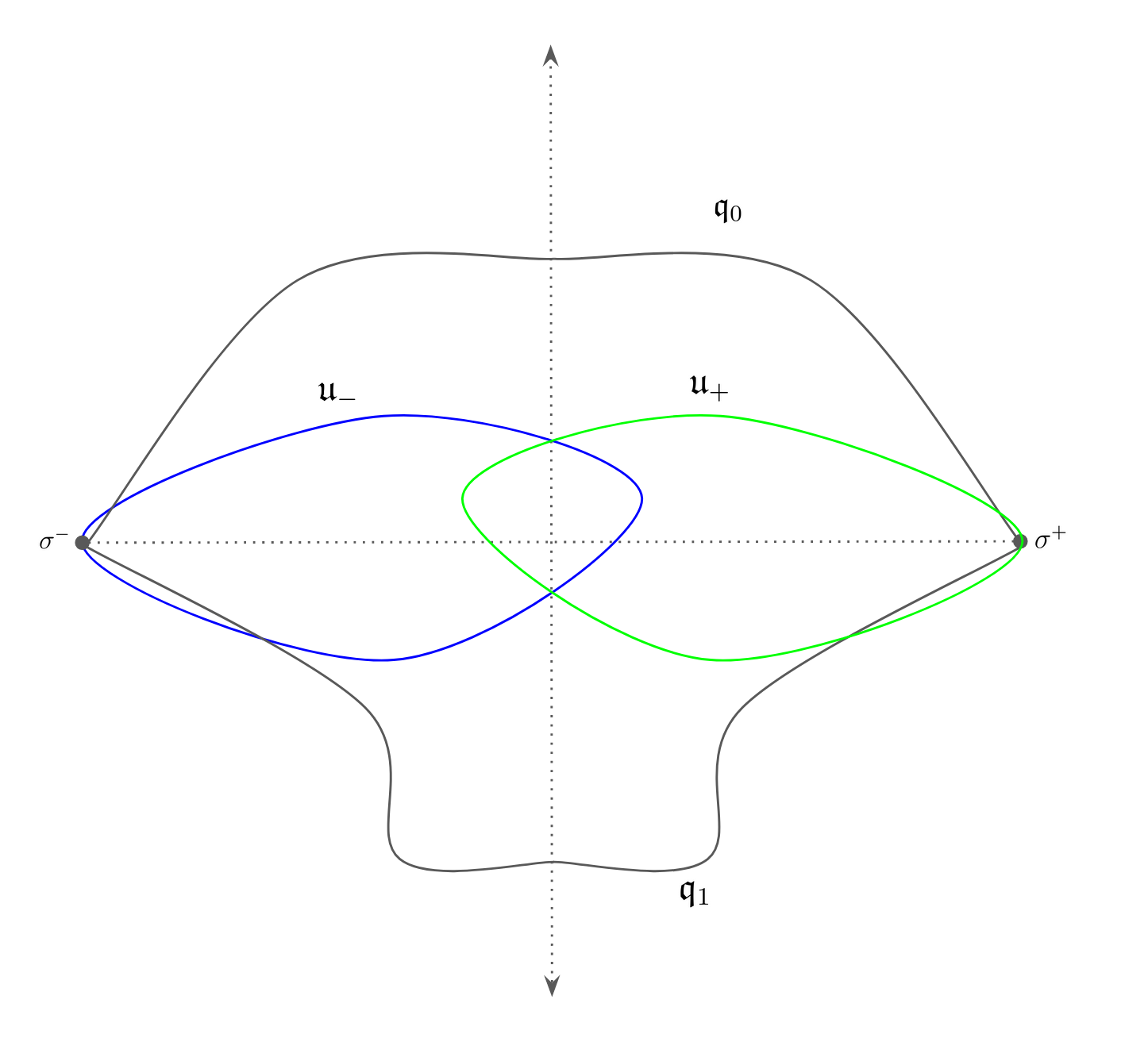}
\caption{Illustration of one of the two possible scenarios described in Theorem \ref{THEOREM-symmetry}. The heteroclinic orbits $\Fq_0$ and $\Fq_1$ represent two distinct symmetric minimizing heteroclinics. The two homoclinic orbits $\Fu_-$ and $\Fu_+$ are related by the reflection $\Fs$.}
\end{figure}
\subsection{Scheme of the proofs and comparison with the previous literature}\label{SUBS-previous}
As stated in the introduction, it is worth recalling that the problem of the existence of homoclinic and heteroclinic solutions for the second-order system of ODEs
\begin{equation}\label{V_*_eq}
q''(t)=\nabla_\bu V_{\star}(t,q(t)), \hspace{2mm} \forall t \in \R,
\end{equation}
using variational methods has been extensively studied during the past decades. In (\ref{V_*_eq}), $V_\star: \R \times \R^k \to \R$ is the potential, usually $T$-periodic in time. Some examples of early papers which use a mountain pass approach to find such solutions are Caldiroli and Montecchiari \cite{caldiroli-montecchiari}, Coti Zelati and Rabinowitz \cite{coti zelati-rabinowitz} and Rabinowitz \cite{rabinowitz90} (where the autonomous case is also treated). In those papers, the potential considered is quite far from being of multi-well type, meaning that the geometry of the associated functional is substantially different to the one considered in the present paper. On the contrary, in the papers Montecchiari and Rabinowitz \cite{montecchiari-rabinowitz18,montecchiari-rabinowitz20} as well as Bisgard \cite{bisgard}, $T$-periodic multi-well potentials $V_\star$ (with explicit time dependence) are considered. In this paper, we prove results which are very close (but not included) to those in \cite{bisgard} following an equivalent scheme of proof. More precisely, we rely in the following natural approach (as for instance in the seminal paper by Brézis and Nirenberg \cite{brezis-nirenberg}):
\begin{enumerate}
\item We prove the existence of a min-max value. In our case, we show in Proposition \ref{PROPOSITION-mountain-pass} that there exists a mountain pass value using the gap condition \ref{asu-heteroclinics}. The same is shown in Proposition \ref{PROPOSITION-sym-MP} for the symmetric setting
\item We analyze the behavior of the associated \textit{Palais-Smale sequences} in order to establish the existence of non-minimizing solutions from this analysis. This is the purpose of assumptions \ref{asu-K}, \ref{asu-symmetry} and \ref{asu-finalK}, which give rise to Theorems \ref{THEOREM-general}, \ref{THEOREM-symmetry} and \ref{THEOREM-symmetry-final} respectively.
\end{enumerate}
We now detail the previous steps of the proof and compare with \cite{bisgard}.
\subsubsection{The mountain pass geometry}
In order to obtain a mountain pass geometry, Bisgard and the other authors consider $\Fq$ a globally minimizing heteroclinic joining two wells $\sigma^-$ and $\sigma^+$. If $V_{\star}$ is say 1-periodic in time and the set $\Sigma:= \{ V_\star=0\}$ is $t$-independent, this implies that for any $n \in \Z$, $\Fq(\cdot+n)$ is also a globally minimizing heteroclinic. In order to establish the mountain pass geometry, Bisgard and the other authors define the family of paths
\begin{equation}
\Gamma_\star:= \{ \gamma \in C([0,1],\scrH): \gamma(0)=\Fq-\psi \mbox{ and } \gamma(1)=\Fq(\cdot+1) - \psi\},
\end{equation}
where $\psi$ is an interpolating function between $\sigma^-$ and $\sigma^+$ as in (\ref{psi}). If one considers the min-max value
\begin{equation}
\Fc_\star:= \inf_{\gamma \in \Gamma_\star}\max_{s \in [0,1]} J_\star(\gamma(s)),
\end{equation}
where
\begin{equation}
J_\star: v \in \scrH \to \int_{\R} \left[ \frac{\lvert v'(t)+\psi'(t) \rvert^2}{2}+V_\star(t,v(t)+\psi(t)) \right] dt,
\end{equation}
then $\Fc_\star>J_\star(\Fq-\psi)=J_\star(\Fq(\cdot+1)-\psi)$ implies
\begin{equation}\label{bisgard_asu}
\{ \Fq(\cdot+\tau): \tau \in [0,1]\} \mbox{ is not a continuum of globally minimizing heteroclinics,}
\end{equation}
see \textit{Proposition 2.1} in \cite{bisgard}. Since (\ref{bisgard_asu}) is never fulfilled if $V_\star$ is autonomous due to translation invariance, autonomous potentials are excluded from Bisgard's approach. Hence, in order to find a mountain pass value of this type for the case of autonomous potentials (that is, for the functional $J$ defined in (\ref{FunctionalJ})), we need then to add an additional assumption which produces a mountain pass geometry by playing a role analogous to (\ref{bisgard_asu}). As explained before, we do so by considering the natural candidate \ref{asu-heteroclinics} introduced in \cite{alessio}. Indeed, in Proposition \ref{PROPOSITION-mountain-pass} we show that such an assumption implies the existence of a mountain pass geometry for the autonomous case. Notice that (\ref{bisgard_asu}) only requires an explicit time dependence on the potential and, therefore, it does not exclude the scalar case. On the contrary, assumption \ref{asu-heteroclinics} for the autonomous problem is more restrictive and completely rules out scalar potentials.
\subsubsection{The analysis of the Palais-Smale sequences}
Once the mountain pass geometry has been established, the next natural step is to analyze the behavior of the Palais-Smale sequences at the mountain pass level, as the classical Palais-Smale condition is not satisfied by $J$ nor $J_\star$. For $J_\star$, this analysis is known and it can be found in \textit{Proposition 3.10} in Rabinowitz \cite{rabinowitz93}, as well as the results in Bisgard \cite{bisgard}, especially \textit{Theorem 1.21}. Condition (\ref{bisgard_asu}) is not necessary for proving those results, meaning that, in particular, they apply to our $J$, see Proposition \ref{PROPOSITION_ASYMPTOTIC_PS}. From this analysis it follows that Palais-Smale sequences (both for $J_\star$ and $J$) split into a chain of connecting orbits solving (\ref{connections-eq}) and that the sum of the energies of the elements of the chain is equal to the level of the Palais-Smale sequence. Using \ref{asu-otherwells}, we find that if one of the elements of the chain is not a globally minimizing heteroclinic between $\sigma^-$ and $\sigma^+$, then \textit{Theorem 2.3} in \cite{bisgard} or Theorem \ref{THEOREM-general} here is established. Nevertheless, there is still the possibility that each element of the limiting chain is a globally minimizing heteroclinic joining $\sigma^-$ and $\sigma^+$. In such a case, no new solution is produced by the mountain pass argument. Therefore, one needs to rule out this possibility by examining more closely the behavior of the Palais-Smale sequences at the mountain pass level. In \cite{bisgard}, the possibility of a chain of minimizing heteroclinics is excluded by imposing an assumption on the mountain pass level $c_\star$. More precisely, by setting
\begin{equation}\label{m_star_pm}
\Fm_\star^\pm:= \inf_{v \in \scrH}J_\star(v+\psi(\pm \cdot))
\end{equation}
if we have that
\begin{equation}\label{bisgard_asu2}
\Fc_\star \not \in \{ k_1 \Fm_\star^-+k_2 \Fm_\star^+: k_1+k_2 =2j+1, j \in \N^*\},
\end{equation}
then one of the elements of the limiting chain satisfies the requirements. This is essentially the assumption imposed by Bisgard in \textit{Theorem 2.3} \cite{bisgard}. In our case, assumption \ref{asu-K} serves the same purpose. The difference is that our argument is slightly more involved, as \ref{asu-K} does not allow to claim the desired conclusion in such a direct fashion. Instead, we show by a deformation procedure\footnote{We owe this idea to the referee. In previous versions of this paper, we relied instead on a lengthier and less direct argument based on a localized version of the mountain pass lemma due to Ghoussoub and Preis \cite{ghoussoub-preiss,ghoussoub}.} based on a result by Willem \cite{willem} that \ref{asu-K} implies that there exists a Palais-Smale sequence at the mountain pass level for which each element of the sequence goes through the set $K$, so it cannot be asymptotic to a formal chain of globally minimizing heteroclinics. The purpose of this approach is the following: as we show in Lemma \ref{LEMMA-3m}, if $\Fc$ satisfies
\begin{equation}\label{rel_Fc_K}
\Fc \not \in \{ (2j+1) \Fm: j \in \N^+\}
\end{equation}
then \ref{asu-K} holds. Relation (\ref{rel_Fc_K}) is nothing but the reformulation of (\ref{bisgard_asu2}) for the autonomous case. Indeed, in the autonomous setting, the values $\Fm^\pm_\star$ defined in (\ref{m_star_pm}) coincide (while they do not necessarily do in the non-autonomous case) meaning that (\ref{bisgard_asu2}) and (\ref{rel_Fc_K}) are the same. Therefore, one could assume (\ref{rel_Fc_K}) instead of \ref{asu-K} and obtain Theorem \ref{THEOREM-general} by the same way that in \cite{bisgard}. Nevertheless, as shown in Lemma \ref{LEMMA-3m} we have that \ref{asu-K} can be more general, so we worked under it instead of (\ref{rel_Fc_K}). In particular, the possibility $\Fc \in \{ (2j+1) \Fm: j \in \N^+\}$ is not excluded by \ref{asu-K}. We think that this feature is relevant as some addition phenomenon among the energies of several non-minimizing solutions in the chain could happen so that the total sum of the energies would be in $(2j+1)\N$. In this case, (\ref{bisgard_asu2}) would not allow to conclude while \ref{asu-K} would.

Another assumption is made by Bisgard in \cite{bisgard}, which leads to the stronger result \textit{Theorem 2.2}, where existence of an heteroclinic at the mountain pass level is shown. It consists on supposing that the mountain pass value is close enough to the minimum. The proof follows from the fact that for a range of values close enough to the minimum, no splitting on the Palais-Smale sequences can occur, meaning that they converge strongly. As we pointed out in Remark \ref{REMARK-eta_min}, the same result holds for our problem. The precise statement is given in Corollary \ref{COROLLARY_existence_principle}.

In any case, all the assumptions discussed before can be difficult to verify in applications. For this reason, we consider the more explicit symmetry assumption  \ref{asu-symmetry} in order to remove the degeneracy due to invariance by translations and recover some compactness. Under this assumption and \ref{asu-otherwells-sym}, we show that if we have dichotomy of the Palais-Smale sequence (which can be chosen such that it belongs to the appropriate symmetrized space $X_{\sym,+}$) then there exists a pair of non-constant homoclinic solutions. Theorem \ref{THEOREM-symmetry} is then deduced. The idea of using the symmetries in order to recover compactness and subsequently establishing existence and multiplicity results has been extensively used in the previous research, we refer for instance to the seminal paper by Berestycki and Lions \cite{berestycki-lions} as well as Van Schaftingen \cite{van schaftingen05} which contains some of the key ideas that we use in our approach and other material. Assumption \ref{asu-symmetry} has the advantage of being more explicit than \ref{asu-K}, (\ref{bisgard_asu2}) and (\ref{rel_Fc_K}), but it rules out a wide class of interesting non-symmetric potentials. We can also combine \ref{asu-symmetry} with \ref{asu-finalK}, which is the symmetrized version of \ref{asu-finalK}, in order to show the existence of a non-minimizing heteroclinic, which is Theorem \ref{THEOREM-symmetry-final}. This is done by relying again on the deformation argument.
\subsection{Preliminary results}\label{SUBS-Preliminary}
In this subsection, we state the technical preliminary results which will be used for establishing the main Theorems. They are for the most part essentially known and a few others are proven by classical arguments. Some relevant references which contain them (or close versions of them) are Rabinowitz \cite{rabinowitz93}, Bisgard \cite{bisgard}, Montecchiari and Rabinowitz \cite{montecchiari-rabinowitz18}, Bertotti and Montecchiari \cite{bertotti-montecchiari}, Alama, Bronsard and Gui \cite{alama-bronsard-gui}, Bronsard, Gui and Schatzman \cite{bronsard-gui-schatzman}. In several cases, we take results from those references and we rephrase them in order to be coherent with our setting. 

We being by recalling some basic properties on the potential $V$. These properties are easy to prove and well known, so the proofs are skipped. We refer, for instance, to \cite{bisgard} and see also \cite{alama-bronsard} for a particularization to the autonomous case. We first recall the following:
\begin{lemma}\label{LEMMA-difference}
Assume that \ref{asu-sigma} and \ref{asu-zeros} hold. Let $(\sigma_i,\sigma_j) \in \Sigma^2$. Let $q$ and $\tilde{q}$ be two elements in $X(\sigma_i,\sigma_j)$. Then $q -\tilde{q} \in \scrH$. Similarly, if $q \in X(\sigma_i,\sigma_j)$ and $v \in \scrH$ then $v+q \in X(\sigma_i,\sigma_j)$.
\end{lemma}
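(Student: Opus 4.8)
The plan is to exploit the non-degeneracy of the wells (assumption \ref{asu-zeros}) to obtain a local quadratic lower bound on $V$ near each $\sigma \in \Sigma$, and then combine this with the finiteness of the energy to control the $L^2$-norm of $q - \tilde q$ near $\pm\infty$; away from $\pm\infty$ everything is bounded and continuous, so there is nothing to do. More precisely, first I would observe that since $q, \tilde q \in X(\sigma_i,\sigma_j)$ both have finite energy, each of $q', \tilde q'$ lies in $\scrL = L^2(\R,\R^k)$, so $(q-\tilde q)' \in \scrL$ and it only remains to show $q - \tilde q \in \scrL$. Since $q, \tilde q \in H^1_{\loc}$ are continuous and converge to the same limit $\sigma_i$ as $t \to -\infty$ and the same limit $\sigma_j$ as $t \to +\infty$, the difference $q - \tilde q$ is continuous and tends to $0$ at both ends; hence it is bounded on $\R$ and its restriction to any compact interval has finite $L^2$-norm. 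The whole issue is the integrability of $|q - \tilde q|^2$ over the two half-lines $(-\infty,-T]$ and $[T,+\infty)$ for $T$ large.

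Next, using \ref{asu-zeros}, fix $\sigma \in \Sigma$: since $D^2 V(\sigma)$ is positive definite and $V \in \CC^2_{\loc}$, there are constants $c_\sigma > 0$ and $r_\sigma > 0$ such that $V(\bu) \geq c_\sigma |\bu - \sigma|^2$ for all $|\bu - \sigma| \leq r_\sigma$ (Taylor expansion of $V$ at $\sigma$, with $V(\sigma) = 0$ and $\nabla V(\sigma) = 0$ since $\sigma$ is a minimum). Taking $r := \min_\sigma r_\sigma$ and $c := \min_\sigma c_\sigma$, we get a uniform bound $V(\bu) \geq c\, \mathrm{dist}(\bu,\Sigma)^2$ whenever $\mathrm{dist}(\bu,\Sigma) \leq r$. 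Now choose $T$ so large that for $|t| \geq T$ both $q(t)$ and $\tilde q(t)$ lie within distance $r/2$ of the appropriate well (possible by the limits at infinity). Then on $[T,\infty)$ one has $|q(t) - \sigma_j|^2 \leq c^{-1} V(q(t)) \leq c^{-1} e(q)(t)$, which is integrable since $E(q) < \infty$; the same for $\tilde q$, and likewise on $(-\infty,-T]$ with $\sigma_i$. By the triangle inequality $|q - \tilde q|^2 \leq 2|q - \sigma|^2 + 2|\tilde q - \sigma|^2$ on each half-line, so $q - \tilde q \in L^2(|t| \geq T)$, and combined with the compact-interval estimate this gives $q - \tilde q \in \scrL$, hence $q - \tilde q \in \scrH$.

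For the second assertion, given $q \in X(\sigma_i,\sigma_j)$ and $v \in \scrH$, set $\tilde q := v + q$. Then $\tilde q \in H^1_{\loc}$, and since $v \in H^1(\R,\R^k) \hookrightarrow C_0(\R,\R^k)$ we have $v(t) \to 0$ as $t \to \pm\infty$, so $\tilde q(t) \to \sigma_i$ and $\tilde q(t) \to \sigma_j$ at $-\infty$ and $+\infty$ respectively. It remains to check $E(\tilde q) < \infty$. The kinetic term is fine since $\tilde q' = v' + q' \in \scrL$. For the potential term, split $\R$ into $[-T,T]$ (where $\tilde q$ is bounded and $V$ is continuous, hence $V(\tilde q)$ is bounded) and $|t| \geq T$. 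Choosing $T$ as above so that $q$ is within $r/4$ of the relevant well and also (enlarging $T$ if necessary, using $v \to 0$) so that $|v(t)| \leq r/4$ there, we have $\tilde q(t)$ within $r/2$ of the well. Then a Taylor estimate at the well gives $V(\tilde q(t)) \leq C\,\mathrm{dist}(\tilde q(t),\Sigma)^2 \leq 2C(|v(t)|^2 + \mathrm{dist}(q(t),\Sigma)^2) \leq 2C|v(t)|^2 + 2C\,c^{-1} V(q(t))$, which is integrable since $v \in \scrL$ and $E(q) < \infty$. Hence $E(\tilde q) < \infty$ and $\tilde q \in X(\sigma_i,\sigma_j)$.

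The only mildly delicate point — hardly an obstacle — is making sure the Taylor expansion of $V$ at the wells is applied uniformly over a finite set of wells and that all the ``choose $T$ large enough'' steps are compatible; this is routine once one writes down the quadratic lower and upper bounds on $V$ near $\Sigma$.
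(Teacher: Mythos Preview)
Your proof is correct and follows the standard approach: use the quadratic lower bound on $V$ near each well (from \ref{asu-zeros}) together with $E(q)<\infty$ to get $q-\sigma \in L^2$ on the tails, and the quadratic upper bound to control $\int V(v+q)$ on the tails. The paper does not actually give its own proof of this lemma; it simply refers to \textit{Lemma 1.4} in Bisgard \cite{bisgard}, whose argument is essentially the one you wrote down.
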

We refer for instance to \textit{Lemma 1.4} in \cite{bisgard} for a proof of this fact.
\begin{lemma}\label{LEMMA-nondegeneracy}
Assume that \ref{asu-sigma} and \ref{asu-zeros} hold. Then, there exist two positive constants $\delta$ and $\beta$ such that for all $\sigma \in \Sigma$
\begin{equation}\label{nondegeneracyV}
\forall \bu \in B(\sigma,\delta), \hspace{2mm} \beta^{-1}V(\bu) \leq \lvert \bu-\sigma \rvert^2 \leq \beta V(\bu).
\end{equation}
and
\begin{equation}\label{nondegeneracyNABLAV}
\forall \bu \in B(\sigma,\delta),\hspace{2mm} \beta^{-1} \langle \nabla V(\bu), \bu-\sigma \rangle \leq \lvert \bu -\sigma \rvert^2 \leq \beta \langle \nabla V(\bu), \bu-\sigma \rangle. 
\end{equation}
\end{lemma}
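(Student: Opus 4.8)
The plan is to prove both double inequalities by a second-order Taylor expansion of $V$ around each zero $\sigma \in \Sigma$, the only inputs being that $\sigma$ is a nondegenerate global minimum of $V$. First I would record that for every $\sigma \in \Sigma$ one has $\nabla V(\sigma) = 0$: by \ref{asu-sigma} the function $V \in \CC^2_{\loc}(\R^k)$ is nonnegative and vanishes at $\sigma$, so $\sigma$ is a global minimum and the gradient vanishes there. Combined with \ref{asu-zeros} (so that $D^2V(\sigma)$ is symmetric positive definite), Taylor's formula with integral remainder gives, for $\bu$ in a neighbourhood of $\sigma$,
\begin{equation*}
V(\bu)=\int_0^1(1-s)\,\langle D^2V(\sigma+s(\bu-\sigma))(\bu-\sigma),\,\bu-\sigma\rangle\,ds
\end{equation*}
and
\begin{equation*}
\langle\nabla V(\bu),\bu-\sigma\rangle=\int_0^1\langle D^2V(\sigma+s(\bu-\sigma))(\bu-\sigma),\,\bu-\sigma\rangle\,ds .
\end{equation*}

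Next, for a fixed $\sigma$ let $0<\lambda_\sigma\le\Lambda_\sigma$ denote the smallest and largest eigenvalues of $D^2V(\sigma)$, so that $\lambda_\sigma|\xi|^2\le\langle D^2V(\sigma)\xi,\xi\rangle\le\Lambda_\sigma|\xi|^2$ for all $\xi\in\R^k$. Since $w\mapsto D^2V(w)$ is continuous, I would choose $\delta_\sigma>0$ small enough that $\tfrac12\lambda_\sigma|\xi|^2\le\langle D^2V(w)\xi,\xi\rangle\le 2\Lambda_\sigma|\xi|^2$ for every $w\in B(\sigma,\delta_\sigma)$ and every $\xi\in\R^k$. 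Substituting these bounds into the two identities above (and using $\int_0^1(1-s)\,ds=\tfrac12$) yields, for all $\bu\in B(\sigma,\delta_\sigma)$,
\begin{equation*}
\tfrac14\lambda_\sigma|\bu-\sigma|^2\le V(\bu)\le\Lambda_\sigma|\bu-\sigma|^2,\qquad \tfrac12\lambda_\sigma|\bu-\sigma|^2\le\langle\nabla V(\bu),\bu-\sigma\rangle\le 2\Lambda_\sigma|\bu-\sigma|^2,
\end{equation*}
which are exactly (\ref{nondegeneracyV}) and (\ref{nondegeneracyNABLAV}) on $B(\sigma,\delta_\sigma)$ with the local constant $\beta_\sigma:=\max\{4\lambda_\sigma^{-1},\,2\Lambda_\sigma\}$.

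Finally, since $\Sigma=\{\sigma_1,\dots,\sigma_l\}$ is finite by \ref{asu-sigma}, I would take $\delta:=\min_{1\le i\le l}\delta_{\sigma_i}>0$ and $\beta:=\max_{1\le i\le l}\beta_{\sigma_i}$; these then satisfy both (\ref{nondegeneracyV}) and (\ref{nondegeneracyNABLAV}) simultaneously for every $\sigma\in\Sigma$, which finishes the proof. There is no genuine obstacle in this argument — it is a routine Taylor estimate — and the only point that needs (minimal) care is the uniformity of the constants $\delta$ and $\beta$ across $\Sigma$, which is immediate from the finiteness of $\Sigma$ together with the continuity of $D^2V$; were $\Sigma$ not assumed finite one would instead invoke a compactness argument.
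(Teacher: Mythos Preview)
Your argument is correct and is precisely the standard Taylor-expansion proof one expects here. The paper does not give its own proof of this lemma --- it states that the result is elementary and well known and skips it --- so there is nothing to compare against; your write-up would serve perfectly well as the omitted proof.
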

The constants $\delta$ and $\beta$ will be fixed for the latter. 
\begin{lemma}\label{LEMMA-limits-V}
Assume that \ref{asu-sigma} and \ref{asu-zeros} hold. Let $q \in H^1_{\loc}(\R,\R^k)$ satisfy $E(q) < +\infty$. Then
\begin{equation}
\lim_{t \to \pm \infty}V(q(t)) = 0.
\end{equation}
\end{lemma}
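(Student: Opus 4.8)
The plan is to argue by contradiction, exploiting that a finite-energy curve cannot have large oscillations near $\pm\infty$, and then that it must in fact converge to $\Sigma$. Concretely, suppose the conclusion fails at $+\infty$ (the case $-\infty$ is symmetric), so there exist $\eta > 0$ and a sequence $t_n \to +\infty$ with $V(q(t_n)) \geq \eta$. First I would observe that, since $e(q) \in L^1(\R)$, we have $\int_{t}^{t+1} e(q)(s)\,ds \to 0$ as $t \to +\infty$; in particular $\int_{t_n}^{t_n+1}\bigl(\tfrac12 |q'|^2 + V(q)\bigr)\,ds \to 0$. By Cauchy--Schwarz, $|q(s) - q(t_n)| \leq \int_{t_n}^{t_n+1} |q'| \leq \bigl(\int_{t_n}^{t_n+1} |q'|^2\bigr)^{1/2} \to 0$ uniformly for $s \in [t_n, t_n+1]$, so for $n$ large the whole arc $q([t_n,t_n+1])$ stays in a small ball around $q(t_n)$. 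Since $V$ is continuous and $V(q(t_n)) \geq \eta$, for $n$ large we get $V(q(s)) \geq \eta/2$ on the unit interval $[t_n,t_n+1]$, whence $\int_{t_n}^{t_n+1} V(q(s))\,ds \geq \eta/2$, contradicting the decay of the tail integral. Passing to a subsequence so that the intervals $[t_n,t_n+1]$ are disjoint makes the contradiction with $\int_{\R} V(q) < +\infty$ even more transparent.

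The one point that needs a little care is that the above shows $V(q(t)) \to 0$ only along the specific argument; to get it as a genuine limit one runs the same contradiction argument directly: if $V(q(t)) \not\to 0$ as $t \to +\infty$, pick $\eta>0$ and $t_n \to +\infty$ with $V(q(t_n)) \geq \eta$, thin out so the unit intervals to the right of each $t_n$ are pairwise disjoint, and derive $\sum_n \int_{t_n}^{t_n+1} V(q) = +\infty$, contradicting $E(q) < +\infty$. I do not think assumption \ref{asu-zeros} is really needed here (only \ref{asu-sigma}, i.e.\ $V \geq 0$ and continuity); it is listed presumably for uniformity with the neighbouring lemmas. I also would not need assumption \ref{asu-infinity}, although it guarantees $|q(t)|$ stays bounded, which is a convenient side remark but not logically necessary for this statement.

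The main (and only) obstacle is the uniform smallness of the oscillation of $q$ on unit windows near infinity, and this is handled cleanly by the Cauchy--Schwarz estimate above together with the integrability of $|q'|^2$. Everything else is soft: continuity of $V$, nonnegativity, and the elementary fact that an $L^1$ function on $\R$ cannot have its integral over a disjoint family of unit intervals bounded below by a fixed positive constant. So I would present the proof in two short paragraphs: (i) the oscillation estimate, (ii) the contradiction argument, done once and for all for the limit (not merely along a sequence), at $+\infty$ and then noting the symmetry for $-\infty$.
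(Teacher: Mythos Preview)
The paper does not supply a proof of this lemma; it is listed among the ``basic properties'' whose proofs are ``skipped'', with a reference to \cite{bisgard,alama-bronsard}. Your oscillation-plus-contradiction argument is the standard route and is how one proves the result.

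There is, however, a genuine gap precisely at the point you flag as unproblematic. The inference ``$V(q(t_n))\geq\eta$ and $\sup_{[t_n,t_n+1]}|q-q(t_n)|\to 0$, hence $V(q(s))\geq\eta/2$ on $[t_n,t_n+1]$'' needs a modulus of continuity for $V$ that is \emph{uniform in $n$}, which you only have if $(q(t_n))_n$ stays bounded. Under \ref{asu-sigma} and \ref{asu-zeros} alone this is not guaranteed, and in fact the statement is false without some control at infinity: take a $C^2_{\loc}$ potential with a channel $\{(0,y):y\geq 2\}$ on which $V$ equals $1/n^4$ on each spatial unit segment $[n,n+1]$ except for a smooth bump of height $1$ and width $1/n^4$; a curve running up the channel at speed $v_n=2/n^2$ on the $n$-th segment has energy $\sum_n\bigl(\tfrac12 v_n + v_n^{-1}\int_n^{n+1}V\bigr)\sim\sum_n 2/n^2<\infty$, yet $V(q(\cdot))$ attains the value $1$ in every segment while $\int_{t_n}^{t_n+1}V(q)\sim 1/n^2\to 0$, so your lower bound $\eta/2$ on unit windows never holds and no contradiction appears. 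Assumption \ref{asu-infinity} is exactly what rescues the argument (and is in force everywhere the paper actually invokes this lemma): from $V\geq\beta_0$ outside $B(0,R_0)$ together with $\int_t^{t+1}V(q)\to 0$ one gets $|q(t)|\leq R_0+1$ for all large $|t|$, after which uniform continuity of $V$ on a fixed compact set makes your proof go through verbatim. Your remark that \ref{asu-zeros} plays no role is correct.
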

In order to apply the mountain pass lemma, we need to show that $J$ is a $C^1$ functional. This is done in \cite{bisgard} and \cite{montecchiari-rabinowitz18}. Let $(\sigma_i,\sigma_j) \in \Sigma^2$, following \cite{bisgard}, take $\chi \in X(\sigma_i,\sigma_j)$ and define
\begin{equation}
J_\chi : v \in \scrH \to E(\chi+v)
\end{equation}
which is well-defined by Lemma \ref{LEMMA-difference}. Under these notations, we have that the functional $J$ defined in (\ref{FunctionalJ}) is $J=J_\psi$, with $\psi$ as in (\ref{psi}). 
\begin{lemma}\label{LEMMA-C1}
Assume that \ref{asu-sigma} and \ref{asu-zeros} hold. Then, we have:
\begin{enumerate}[i)]
\item For any $(\sigma_i,\sigma_j) \in \Sigma^2$ and $\chi \in X(\sigma_i,\sigma_j)$, $J_\chi$ is a $C^1$ functional on $\scrH$ with derivative:
\begin{equation}\label{DJ}
\forall v \in \scrH, \hspace{2mm} DJ_\chi(v):w \in \scrH \to \int_\R \left( \langle \chi'+v',w' \rangle + \langle \nabla V(\chi+v),w \rangle \right) \in \R.
\end{equation}
In particular, if \ref{asu-sigma} and \ref{asu-zeros} hold and $DJ_\chi(v)=0$ for $v \in \scrH$, then $v+\chi$ solves (\ref{connections-eq}).
\item $J$ is $C^1$ as a functional restricted to $\scrH_\sym$ and its differential is as in (\ref{DJ}) with the proper modifications. If, moreover, we add the symmetry assumption \ref{asu-symmetry} and $v \in \scrH_\sym$ is such that $DJ(v)=0$ in $\scrH_\sym$, then $v+\psi$ solves (\ref{connections-eq}). 
\end{enumerate}
\end{lemma}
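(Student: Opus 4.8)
The plan is to write $J_\chi(v)=E(\chi+v)$ as the sum of a kinetic term and a potential term and to treat the potential term as a Nemytskii operator acting on the bounded range of $\chi+v$. Throughout, Lemma \ref{LEMMA-difference} guarantees that $\chi+v\in X(\sigma_i,\sigma_j)$ for every $v\in\scrH$, so $E(\chi+v)<+\infty$ and $J_\chi$ is well defined and finite; moreover $E(\chi)<+\infty$ gives $\chi'\in\scrL$. The kinetic term $v\mapsto\frac12\int_\R|\chi'+v'|^2=\frac12\|\chi'\|_{\scrL}^2+\langle\chi',v'\rangle_{\scrL}+\frac12\|v'\|_{\scrL}^2$ is then a degree-two polynomial in $v$, hence $\CC^\infty$, with differential $w\mapsto\int_\R\langle\chi'+v',w'\rangle$, which is exactly the first term of (\ref{DJ}).

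For the potential term $P(v):=\int_\R V(\chi+v)$, the key point is that $\chi+v$ has image in a bounded set, because $\chi$ is continuous with finite limits $\sigma_i,\sigma_j$ at $\mp\infty$ and $\scrH\hookrightarrow L^\infty(\R,\R^k)$. Fixing $v$, I would let $\mathcal K$ be a closed ball containing the image of $\chi+v+w$ for all $w$ with $\|w\|_{\scrH}\le1$ and set $M:=\sup_{\mathcal K}(\|\nabla V\|+\|D^2V\|)<+\infty$, using $V\in\CC^2_{\loc}$. First one checks $\nabla V(\chi+v)\in\scrL$, so that $w\mapsto\int_\R\langle\nabla V(\chi+v),w\rangle$ is a bounded linear functional on $\scrH$: on any compact interval it is bounded, and for $|t|$ large $\chi+v$ lies in the ball $B(\sigma,\delta)$ of Lemma \ref{LEMMA-nondegeneracy} around the relevant well $\sigma$, where $|\nabla V(\chi+v)|=|\nabla V(\chi+v)-\nabla V(\sigma)|\lesssim|\chi+v-\sigma|$, hence $|\nabla V(\chi+v)|^2\lesssim V(\chi+v)\in L^1(\R)$. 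Fréchet differentiability at $v$ then follows from
\[
V(\chi+v+w)-V(\chi+v)-\langle\nabla V(\chi+v),w\rangle=\int_0^1\langle\nabla V(\chi+v+sw)-\nabla V(\chi+v),\,w\rangle\,ds,
\]
the mean value bound $|\nabla V(\chi+v+sw)-\nabla V(\chi+v)|\le Ms|w|$ valid on $\mathcal K$, and integration in $s$ and $t$, which give $|P(v+w)-P(v)-\int_\R\langle\nabla V(\chi+v),w\rangle|\le\tfrac{M}{2}\|w\|_{\scrL}^2=o(\|w\|_{\scrH})$. The same Lipschitz estimate applied along the segment from $v_0$ to $v$ yields $\|\nabla V(\chi+v)-\nabla V(\chi+v_0)\|_{\scrL}\le M\|v-v_0\|_{\scrL}\to0$, so $v\mapsto DP(v)$ is continuous. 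This proves the $\CC^1$ claim together with formula (\ref{DJ}).

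If $DJ_\chi(v)=0$, testing (\ref{DJ}) against $w\in C^\infty_c(\R,\R^k)$ shows $q:=\chi+v$ satisfies $q''=\nabla V(q)$ in $\CD'(\R)$; since $q$ is continuous and $V\in\CC^2_{\loc}$, the right-hand side is continuous, so $q\in\CC^2(\R,\R^k)$ and solves (\ref{connections-eq}) classically. For part (ii), $\scrH_\sym$ is a closed subspace of $\scrH$, so the restriction of $J=J_\psi$ to it is $\CC^1$ with the corresponding restriction of (\ref{DJ}) as differential; this half requires no extra hypothesis. The only new ingredient is a principle of symmetric criticality: under \ref{asu-symmetry} the involution $\rho$ defined by $(\rho v)(t):=\Fs(v(-t))$ is a linear isometry of $\scrH$ fixing $\psi$ (as $\psi\in X_{\sym,+}$) and satisfying $J\circ\rho=J$ (using $V\circ\Fs=V$, $|\Fs(z)|=|z|$ and the change of variables $t\mapsto-t$), while $\scrH_\sym=\ker(\rho-\mathrm{Id})$ and $\scrH=\ker(\rho-\mathrm{Id})\oplus\ker(\rho+\mathrm{Id})$ orthogonally. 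If $v\in\scrH_\sym$ and $DJ(v)$ vanishes on $\scrH_\sym$, differentiating $J\circ\rho=J$ at the $\rho$-fixed point $v$ gives $DJ(v)\circ\rho=DJ(v)$, whence $DJ(v)(w)=DJ(v)(\rho w)=-DJ(v)(w)=0$ for $w\in\ker(\rho+\mathrm{Id})$; combined with the hypothesis this gives $DJ(v)=0$ on all of $\scrH$, and part (i) with $\chi=\psi$ finishes the argument.

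I expect the only delicate step to be the bookkeeping that makes $P$ a genuine $\CC^1$ map, namely confirming $\nabla V(\chi+v)\in\scrL$ and that the Taylor remainder is controlled uniformly on a neighbourhood of $v$; this is exactly where Lemma \ref{LEMMA-nondegeneracy} and the finiteness of the energy enter. Everything else is the routine Nemytskii-operator computation, and the symmetric part is the standard symmetric-criticality argument.
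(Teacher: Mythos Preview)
Your proof is correct and complete. The paper itself does not give a proof of this lemma: for part (i) it simply cites \textit{Proposition 1.6} in Bisgard \cite{bisgard}, and for part (ii) it says the proof ``follows from classical arguments using assumption \ref{asu-symmetry}'' and skips it. Your argument supplies exactly what those references would contain: the standard Nemytskii-operator computation for the potential term (with the crucial observation that $\nabla V(\chi+v)\in\scrL$ thanks to Lemma \ref{LEMMA-nondegeneracy} and finiteness of the energy), and the Palais principle of symmetric criticality for part (ii). One small phrasing remark: when you say ``$\rho$ fixing $\psi$'', strictly speaking $\psi\notin\scrH$; what you are really using is that the affine action $q\mapsto\Fs(q(-\cdot))$ on $X(\sigma^-,\sigma^+)=\{\psi\}+\scrH$ fixes $\psi$, which is precisely what makes $J\circ\rho=J$ on $\scrH$. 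This is exactly what your subsequent computation verifies, so the argument is sound.
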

Item i) in Lemma \ref{LEMMA-C1} is essentially \textit{Proposition 1.6}  in \cite{bisgard}, for the particular case of autonomous potentials. The proof of item ii) follows from classical arguments using assumption \ref{asu-symmetry}, so we skip it. Next, we recall the following general property for sequences with uniformly bounded energy:
\begin{lemma}\label{LEMMA-boundedE}
Assume that \ref{asu-sigma}, \ref{asu-infinity} and \ref{asu-zeros} hold. Let $(\sigma_i,\sigma_j) \in \Sigma^2$. Let $(q_n)_{n \in \N}$ be a sequence in $X(\sigma_i,\sigma_j)$ such that $\sup_{n \in \N}E(q_n) <+\infty$. Then, up to an extraction, there exists $q \in H^1_{\loc}(\R,\R^k)$ such that $q_n \to q$ locally uniformly and $q_n' \rightharpoonup q'$ weakly in $\scrL$. Moreover, $E(q) \leq \liminf_{n \to \infty} E(q_n)$.
\end{lemma}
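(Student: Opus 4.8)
The plan is to extract weak compactness of the derivatives in $\scrL$, upgrade it to local uniform convergence of the curves via Arzel\`a--Ascoli, and then conclude by weak lower semicontinuity of the $\scrL$-norm together with Fatou's lemma. Set $M := \sup_{n} E(q_n) < +\infty$. First I would, if necessary, pass to a subsequence (not relabelled) along which $E(q_n) \to \liminf_n E(q_n)$, so that the final inequality can be read off for the original $\liminf$. Since $\frac12 \int_\R |q_n'|^2 \le E(q_n) \le M$, the sequence $(q_n')_n$ is bounded in the Hilbert space $\scrL$; hence, along a subsequence, $q_n' \rightharpoonup w$ weakly in $\scrL$ for some $w \in \scrL$.

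The only non-routine point is a uniform bound for $(q_n)_n$ on compact intervals, and this is exactly where \ref{asu-infinity} is used. By \ref{asu-infinity}, $V(\bu) \ge \beta_0$ whenever $|\bu| \ge R_0$, so on the set $A_n := \{ t \in \R : |q_n(t)| \ge R_0 \}$ we have $\beta_0 |A_n| \le \int_\R V(q_n) \le M$, i.e. $|A_n| \le M/\beta_0$ for every $n$. On the other hand, Cauchy--Schwarz gives $|q_n(t) - q_n(s)| \le |t-s|^{1/2}(2M)^{1/2}$ for all $s,t \in \R$, so $(q_n)_n$ is equi-H\"older continuous on $\R$. Fixing a compact interval $[a,b]$ and setting $J := [\,a - 1 - M/\beta_0,\, a\,]$, we have $|J \setminus A_n| \ge |J| - |A_n| \ge 1 > 0$, so there is $s_n \in J$ with $|q_n(s_n)| < R_0$; hence $\|q_n\|_{L^\infty([a,b])} \le R_0 + \big((b-a) + 1 + M/\beta_0\big)^{1/2}(2M)^{1/2}$, a bound independent of $n$. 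By Arzel\`a--Ascoli applied on a fixed exhaustion of $\R$ by compact intervals together with a diagonal extraction, a further subsequence of $(q_n)_n$ converges locally uniformly to a continuous $q : \R \to \R^k$.

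It remains to identify the limits and establish the energy inequality. Testing $q_n' \rightharpoonup w$ against $\phi \in C_c^\infty(\R,\R^k)$ and using $q_n \to q$ locally uniformly, $\int_\R \langle w, \phi \rangle = -\lim_n \int_\R \langle q_n, \phi' \rangle = -\int_\R \langle q, \phi' \rangle$, so $q' = w \in \scrL$ in the sense of distributions; in particular $q \in H^1_\loc(\R,\R^k)$ and $q_n' \rightharpoonup q'$ weakly in $\scrL$ along the chosen subsequence. For the energy, weak lower semicontinuity of the norm in $\scrL$ gives $\int_\R |q'|^2 \le \liminf_n \int_\R |q_n'|^2$, while Fatou's lemma applied to the nonnegative continuous integrands $V(q_n) \to V(q)$ (pointwise, from local uniform convergence) gives $\int_\R V(q) \le \liminf_n \int_\R V(q_n)$; adding these and using $\liminf a_n + \liminf b_n \le \liminf (a_n + b_n)$ yields $E(q) \le \liminf_n E(q_n)$, which by the initial reduction is the claimed bound. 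I expect the measure estimate $|A_n| \le M/\beta_0$ --- that is, invoking \ref{asu-infinity} to prevent the curves from escaping to infinity --- to be the one ingredient genuinely beyond routine functional analysis; note that the limit conditions at $\pm\infty$ defining $X(\sigma_i,\sigma_j)$ hold for each $n$ but not uniformly, so they cannot be used directly for the compactness.
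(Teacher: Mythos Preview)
Your proof is correct and follows the classical argument one would expect here; the paper in fact omits the proof entirely, declaring the result well-known and classical. The use of \ref{asu-infinity} to bound the measure of $\{|q_n| \geq R_0\}$ and thereby anchor the equi-H\"older curves is exactly the standard step, and your identification of the weak limit and the lower semicontinuity argument are routine and sound.
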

The property given by Lemma \ref{LEMMA-boundedE} is certainly well-known and the proof is classical, so we omit it. As we see, a uniform bound on the energy is not sufficient to obtain control on the behavior of the sequence of infinity. This is due to the fact that $V$ possesses more than one zero and it is the cause of non-existence phenomena already when dealing with the minimization problem. Using Lemma \ref{LEMMA-boundedE}, we obtain by classical arguments the following property for arbitrary Palais-Smale sequences:
\begin{lemma}\label{LEMMA-PSbasic}
Assume that \ref{asu-sigma}, \ref{asu-infinity} and \ref{asu-zeros} hold. Let $(v_n)_{n \in \N}$ be Palais Smale sequence at $c \geq \Fm$, i.e.,
\begin{equation}\label{HYP-PSbasic}
\lim_{n \to \infty}J(v_n) = c \mbox{ and } \lim_{n \to \infty}DJ(v_n) = 0 \mbox{ in } \scrH.
\end{equation}
Then, the following holds:
\begin{enumerate}
\item There exists a subsequence of $(v_n)_{n \in \N}$ (not relabeled) and $q \in H^1_{\loc}(\R,\R^k)$ such that
\begin{equation}\label{CONV-PSbasic}
\forall S_K \subset \R \mbox{ compact,} \hspace{2mm} \psi+v_n \to_{n \to \infty} q \mbox{ strongly in } H^1(S_K,\R^k).
\end{equation}
Moreover, $E(q) \leq c$ and $q \in \CC^2(\R,\R^k)$ solves (\ref{connections-eq}).
\item For any $(\tau_n)_{n \in \N}$ a sequence of real numbers, the sequence $(v_n^{\tau_n})_{n \in \N}$ defined as
\begin{equation}
\forall n \in \N, \hspace{2mm} v_n^{\tau_n}:= \psi(\cdot+\tau_n)+v_n(\cdot+\tau_n)-\psi
\end{equation}
is a Palais-Smale sequence at the level $c$ as in (\ref{HYP-PSbasic}).
\end{enumerate}
\end{lemma}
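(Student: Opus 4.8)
The proof proceeds claim by claim. For item 1, the plan is to start from a uniform energy bound and then upgrade weak convergence to strong local convergence using the Palais--Smale condition. Since $J(v_n)=E(\psi+v_n)\to c$, the curves $q_n:=\psi+v_n$, which belong to $X(\sigma^-,\sigma^+)$ by Lemma \ref{LEMMA-difference}, satisfy $\sup_{n}E(q_n)<+\infty$. Lemma \ref{LEMMA-boundedE} then yields, along a subsequence, a limit $q\in H^1_{\loc}(\R,\R^k)$ with $q_n\to q$ locally uniformly, $q_n'\rightharpoonup q'$ weakly in $\scrL$, and $E(q)\le\liminf_n E(q_n)=c$. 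It remains to promote this to the strong convergence (\ref{CONV-PSbasic}) and to show that $q$ solves (\ref{connections-eq}).

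The strong local convergence is precisely where the Palais--Smale hypothesis enters. Fix $T>0$ and a cutoff $\zeta\in C_c^\infty(\R)$ with $0\le\zeta\le1$, $\zeta\equiv1$ on $[-T,T]$, and $\supp\zeta\subset[-T-1,T+1]$. Testing $DJ(v_n)$ against $w_n:=\zeta^2(q_n-q)\in\scrH$, where the sequence $(w_n)_n$ is bounded in $\scrH$ because of the uniform energy bound and the local uniform convergence $q_n\to q$, gives $DJ(v_n)(w_n)\to0$. Expanding, via (\ref{DJ}), $DJ(v_n)(w_n)=\int_\R\langle q_n',w_n'\rangle+\int_\R\langle\nabla V(q_n),w_n\rangle$ and writing $w_n'=2\zeta\zeta'(q_n-q)+\zeta^2(q_n'-q')$, the terms carrying the factor $q_n-q$ tend to $0$ (by Cauchy--Schwarz, using $\lVert q_n'\rVert_{\scrL}\le\sqrt{2\sup_m E(q_m)}$, the uniform convergence $q_n\to q$ on $\supp\zeta$, and the boundedness of $\nabla V$ on a fixed compact set eventually containing all the $q_n(\supp\zeta)$). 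The weak convergence $q_n'\rightharpoonup q'$ turns $\int_\R\zeta^2\langle q_n',q_n'-q'\rangle$ into $\int_\R\zeta^2\lvert q_n'-q'\rvert^2+o(1)$. Hence $\int_\R\zeta^2\lvert q_n'-q'\rvert^2\to0$, so $q_n\to q$ strongly in $H^1([-T,T])$; since $T$ is arbitrary, (\ref{CONV-PSbasic}) follows. Passing to the limit in $DJ(v_n)(\phi)\to0$ for $\phi\in C_c^\infty(\R,\R^k)$, using the local convergence and the continuity of $\nabla V$, shows $q''=\nabla V(q)$ in the sense of distributions; since $q\in C^0_{\loc}$ and $\nabla V\in C^1_{\loc}$, a standard one-line bootstrap yields $q\in\CC^2(\R,\R^k)$ and $q$ a classical solution of (\ref{connections-eq}). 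This proves item 1.

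For item 2, write $\rho_\tau h:=h(\cdot+\tau)$, a linear isometry of $\scrH$ (and of $\scrL$) with inverse $\rho_{-\tau}$. A direct computation gives $\psi+v_n^{\tau_n}=\rho_{\tau_n}(\psi+v_n)$; in particular $\psi+v_n^{\tau_n}\in X(\sigma^-,\sigma^+)$, and by translation invariance of the integral over $\R$ one has $J(v_n^{\tau_n})=E\big(\rho_{\tau_n}(\psi+v_n)\big)=E(\psi+v_n)=J(v_n)\to c$. For the differential, the same change of variables in the formula (\ref{DJ}) gives $DJ(v_n^{\tau_n})(w)=DJ(v_n)(\rho_{-\tau_n}w)$ for every $w\in\scrH$; taking the supremum over $\lVert w\rVert_\scrH\le1$ and using that $\rho_{-\tau_n}$ preserves the $\scrH$-norm yields $\lVert DJ(v_n^{\tau_n})\rVert_{\scrH^*}=\lVert DJ(v_n)\rVert_{\scrH^*}\to0$. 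Hence $(v_n^{\tau_n})_n$ is a Palais--Smale sequence at level $c$, which is item 2.

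The only genuinely delicate point is the strong $H^1_{\loc}$ convergence: one must test against functions that actually lie in $\scrH$ (this forces the $\zeta^2$ truncation), and then carefully isolate the "derivative part'' $\int_\R\zeta^2\lvert q_n'-q'\rvert^2$ from the lower-order remainders, all of which vanish thanks to the uniform local convergence and the uniform energy bound. Everything else reduces to Lemma \ref{LEMMA-boundedE}, the expression (\ref{DJ}) for $DJ$, and the elementary translation invariance of $E$ and of the $\scrH$-norm.
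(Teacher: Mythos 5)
Your proof is correct and follows essentially the same route as the paper: first Lemma \ref{LEMMA-boundedE} and the energy bound give a local limit $q$ with $E(q)\leq c$, then the Palais--Smale condition upgrades this to strong $H^1_{\loc}$ convergence, the equation (\ref{connections-eq}) follows by passing to the limit in the weak formulation, and part 2 is exactly the paper's translation-invariance computation. The only difference is in the strong-convergence step, where you test $DJ(v_n)$ against the cutoff functions $\zeta^2(q_n-q)$ and exploit the weak convergence $q_n'\rightharpoonup q'$, while the paper tests against arbitrary compactly supported $v_K\in\scrH$ and uses a $D^2V$-based estimate plus a dual-norm argument; both are standard and your variant is, if anything, slightly more self-contained.
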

\begin{proof}
We show the first part. Define $(q_n)_{n \in \N}:= (\psi+v_n)_{n \in \N}$, which is a sequence contained in $X(\sigma^-,\sigma^+)$. Using Lemma \ref{LEMMA-boundedE} and the first part of the Palais-Smale condition (\ref{HYP-PSbasic}), we find $q \in H^1_{\loc}(\R,\R^k)$ and a subsequence (not relabeled) such that $E(q) \leq c$, $q_n \to q$ locally uniformly and $q_n'\rightharpoonup q'$ in $\scrL$. We show the local convergence with respect to the $H^1$ norm. Let $S_K \subset \R$ be compact and $v_{S_K} \in \scrH$ with $\supp(v_K) \subset S_K$. Using Cauchy-Schwartz inequality, we have
\begin{align}
\left\lvert \int_\R\langle \nabla V(q_n)- \nabla V(q), v_K  \rangle \right\rvert &= \left\lvert \int_K \int_0^1\langle D^2V(\lambda q+(1-\lambda)q_n)(q_n-q)   ,v_K \rangle d \lambda \right\rvert \\ \label{PSbasic-expr1}
&\leq C_K\lVert q_n-q \rVert_{L^2(K,\R^k)}\lVert v_K \rVert_{L^2(K,\R^k)},
\end{align}
where $C_K:= \max_{K}(D^2V(\lambda q+(1-\lambda)q_n))$. We have that $q_n \to q$ uniformly in $S_K$, so due to the continuity of $D^2V$ we have that $C_K < \infty$ and $C_K$ independent on the sequence $(q_n)_{n \in \N}$. Using (\ref{DJ}) and (\ref{PSbasic-expr1}), we write
\begin{equation}\label{PSbasic-expr2}
\left\lvert \int_K \langle q_n'-q', v_K' \rangle \right\rvert \leq C_K\lVert q_n-q \rVert_{L^2(K,\R^k)}\lVert v_K \rVert_{L^2(K,\R^k)}+ DJ(v_n)(v_K).
\end{equation}
Taking the supremum in (\ref{PSbasic-expr2}) for $v_K \in \scrH$ with $\supp(v_K) \subset S_K$ and $\lVert v_K \rVert_{\scrH} \leq 1$, by the dual characterization of the norm of a Hilbert space we get
\begin{equation}\label{PSbasic-expr3}
\lVert q_n' -q' \rVert_{L^2(K,\R^k)} \leq C_K \lVert q_n-q \rVert_{L^2(K,\R^k)}+\lVert DJ(v_n) \rVert_{\scrH}.
\end{equation}
Since $q_n \to q$ uniformly in $K$, we have $q_n \to q$ in $L^2(S_K,\R^k)$. In addition, the Palais-Smale condition (\ref{HYP-PSbasic}) implies $\lVert DJ(v_n) \rVert_{\scrH} \to 0$. Therefore, we have 
\begin{equation}
\lVert q_n' - q' \rVert_{L^2(S_K,\R^k)} \to 0,
\end{equation}
meaning that $q_n \to q$ in $H^1(S_K,\R^k)$, as we wanted to show. It only remains to show that $q$ solves (\ref{connections-eq}). Take $\varphi \in \CC^\infty_c(\R,\R^k)$. The convergence of the sequence inside $H^1(\supp(\varphi),\R^k)$ is strong, meaning that we can show
\begin{equation}
DJ(v)(\varphi) = \lim_{n \to \infty} DJ(v_n)(\varphi) = 0.
\end{equation}
In conclusion
\begin{equation}
\forall \varphi \in \CC_c(\R,\R^k), \hspace{2mm} \int_\R \left[ \langle q',\varphi' \rangle + \langle \nabla V(q),\varphi \rangle  \right]=0,
\end{equation}
which by classical regularity arguments means that $q$ is a solution of (\ref{connections-eq}) which belongs to $\CC^2(\R,\R^k)$. 

For proving part 2, it suffices to write for any $n \in \N$ and $\varphi \in \scrH$
\begin{equation}
DJ(v_n^{\tau_n})(\varphi)=DJ(v_n)(\varphi(\cdot-\tau_n)),
\end{equation}
which by taking the supremum in the unit ball of $\scrH$ gives
\begin{equation}
\lVert DJ(v_n^{\tau_n})\rVert_\scrH = \lVert DJ(v_n) \rVert_\scrH.
\end{equation}
\qed
\end{proof}
As in Lemma \ref{LEMMA-boundedE}, Lemma \ref{LEMMA-PSbasic} gives no control on the convergence of the elements of the sequence at infinity. In particular, in general the functional $J$ \textit{does not} satisfy the so-called Palais-Smale condition\footnote{We say that $J$ satisfies the Palais-Smale condition at the level $c \geq \Fm$ if every sequence satisfying (\ref{HYP-PSbasic}) possesses a convergent subsequence in $\scrH$.}, at least for arbitrary $c \geq \Fm$. The problem is not fixed even if we use the translation invariance property from the second part of Lemma \ref{LEMMA-PSbasic}. As explained already, assumptions \ref{asu-K}, \ref{asu-symmetry} and \ref{asu-wells} are introduced in order to circumvent this issue. Assumptions \ref{asu-otherwells} and \ref{asu-otherwells-sym} are made in order to exclude the possibility that the Palais-Smale sequences at the mountain pass levels originate a globally minimizing connecting orbit joining a well in $\{ \sigma^-,\sigma^+\}$ and a well in $\Sigma \setminus \{ \sigma^-,\sigma^+\}$. This is shown by the following:
\begin{lemma}\label{LEMMA-3rdwell}
Assume that \ref{asu-sigma}, \ref{asu-infinity} and \ref{asu-zeros} hold. Let $q \in X(\sigma^-,\sigma^+)$ be such that
\begin{equation}\label{3rdwell-hyp}
E(q) \leq C,
\end{equation}
where $C<\Fm^\star$, where $\Fm^\star$ is as in (\ref{Fm_star}). There exists $\rho_2(C)>0$, depending only on $V$ and $C$, such that
\begin{equation}\label{3rdwell-conclusion}
\forall \sigma \in \Sigma \setminus \{ \sigma^-,\sigma^+\}, \forall t \in \R, \hspace{2mm} \lvert q(t)-\sigma \rvert \geq \rho_2(C).
\end{equation}
\end{lemma}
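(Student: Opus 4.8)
The plan is to argue by contradiction via a cut-and-paste (surgery) construction. Suppose (\ref{3rdwell-conclusion}) fails for a threshold $\rho>0$ to be fixed at the end in terms of $V$ and $C$ only: then there are $\sigma\in\Sigma\setminus\{\sigma^-,\sigma^+\}$ and $t_0\in\R$ with $\eps:=\lvert q(t_0)-\sigma\rvert<\rho$. (If $\Sigma=\{\sigma^-,\sigma^+\}$ the statement is vacuous, so we may assume $\Sigma\setminus\{\sigma^-,\sigma^+\}\neq\emptyset$ and that $\Fm^\star$ in (\ref{Fm_star}) is a genuine finite minimum.) The idea is to split $q$ at $t_0$ and close up each half near $\sigma$ by a short affine bridge, thereby producing a curve in $X(\sigma^-,\sigma)$ and a curve in $X(\sigma,\sigma^+)$ whose energies sum to essentially $E(q)$; this will force $E(q)$ to be at least $\Fm_{\sigma^-\sigma}+\Fm_{\sigma\sigma^+}$, up to a small error, hence at least $\Fm^\star$ up to that error, contradicting $E(q)\leq C<\Fm^\star$.

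Concretely, assuming $\rho\leq\delta$ with $\delta$, $\beta$ as in Lemma \ref{LEMMA-nondegeneracy}, I would let $q_1$ equal $q$ on $(-\infty,t_0]$, equal the affine interpolation from $q(t_0)$ to $\sigma$ on $[t_0,t_0+1]$, and equal $\sigma$ on $[t_0+1,+\infty)$; and symmetrically let $q_2$ equal $\sigma$ on $(-\infty,t_0-1]$, the affine interpolation from $\sigma$ to $q(t_0)$ on $[t_0-1,t_0]$, and $q$ on $[t_0,+\infty)$. Both curves are continuous and in $H^1_\loc(\R,\R^k)$, and since their non-bridge parts are restrictions of $q$ (together with a zero-energy constant tail) they have finite energy; hence $q_1\in X(\sigma^-,\sigma)$ and $q_2\in X(\sigma,\sigma^+)$, and in particular these spaces are nonempty so $\Fm_{\sigma^-\sigma}$ and $\Fm_{\sigma\sigma^+}$ are finite. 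On each bridge the moving point has constant speed $\eps$ and remains within distance $\eps<\delta$ of $\sigma$, so by the left inequality of (\ref{nondegeneracyV}) the potential there is at most $\beta\eps^2$; thus each bridge carries energy at most $\tfrac12\eps^2+\beta\eps^2$. By additivity of the integral,
\begin{equation*}
\Fm_{\sigma^-\sigma}+\Fm_{\sigma\sigma^+}\;\leq\;E(q_1)+E(q_2)\;=\;E(q)+2\Big(\tfrac12+\beta\Big)\eps^2\;\leq\;C+(1+2\beta)\,\eps^2 .
\end{equation*}

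Since $\sigma\notin\{\sigma^-,\sigma^+\}$, the left-hand side is at least $\Fm^\star$ by (\ref{Fm_star}), whence $\eps^2\geq(\Fm^\star-C)/(1+2\beta)>0$ because $C<\Fm^\star$. This contradicts $\eps<\rho$ as soon as $\rho\leq\sqrt{(\Fm^\star-C)/(1+2\beta)}$. Therefore it suffices to set
\begin{equation*}
\rho_2(C):=\min\Big\{\,\delta,\ \sqrt{\tfrac{\Fm^\star-C}{1+2\beta}}\,\Big\},
\end{equation*}
which is positive and depends only on $V$ (through $\delta$, $\beta$, and the finitely many values entering $\Fm^\star$) and on $C$; this proves (\ref{3rdwell-conclusion}).

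I do not expect a real obstacle here: the only points requiring care are checking that the surgered curves genuinely lie in $X(\sigma^-,\sigma)$ and $X(\sigma,\sigma^+)$ — which is immediate, since their tails are restrictions of $q$ and hence have energy bounded by $E(q)\le C$ — and estimating the energy of the affine bridges purely in terms of $\eps$, $\delta$, $\beta$, which is exactly the content of Lemma \ref{LEMMA-nondegeneracy} and the reason we require $\rho_2(C)\leq\delta$.
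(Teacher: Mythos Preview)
Your argument is correct; the paper itself omits the proof of this lemma entirely, describing it as a straightforward generalization of results in \cite{alama-bronsard-gui} and \cite{bronsard-gui-schatzman}. The cut-and-paste surgery you carry out---splitting $q$ at the near-miss time and closing each half with a short affine bridge whose energy is controlled via Lemma~\ref{LEMMA-nondegeneracy}---is exactly the standard technique used in those references, and your bookkeeping (including the explicit choice of $\rho_2(C)$) is clean.
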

Lemma \ref{LEMMA-3rdwell} is a straightforward generalization of results which where known previously, see \cite{alama-bronsard-gui} and \cite{bronsard-gui-schatzman}. The proof is skipped.

We conclude this paragraph by recalling that the complete asymptotic analysis of the Palais-Smale sequences and some of the consequences that follow are available in \cite{rabinowitz93} and \cite{bisgard}. Such properties do not play a major role in our argument\footnote{Proposition \ref{PROPOSITION_ASYMPTOTIC_PS} is only invoked once, in the proof of Lemma \ref{LEMMA-3m} and Corollary \ref{COROLLARY_existence_principle} is brought into account in Remark \ref{REMARK-eta_min}} the reason being that we find Lemma \ref{LEMMA-PSbasic} is better adapted to our purposes. The main result can be stated as follows for our setting:
\begin{proposition}\label{PROPOSITION_ASYMPTOTIC_PS}
Assume that \ref{asu-sigma}, \ref{asu-infinity} and \ref{asu-zeros} hold. Let $(\sigma_i,\sigma_j) \in \Sigma^2$, $c \in \R$, $\chi \in X(\sigma_i,\sigma_j)$ and $(v_n)_{n \in \N}$ be a Palais-Smale sequence for $J_\chi$ at the level $c$. Then, up to an extraction there exists $j \in \N^*$, such that there is $(A_{n}^i)_{n \in \N, i\in \{ 1,\ldots,j \}}$ a sequence of adjacent sub-intervals of $\R$, $(\tau_{n}^i)_{n \in \N, i \in \{1,\ldots,j\} }$ a sequence of translates in $\R$ and $q^1, \ldots, q^j$ solutions of (\ref{connections-eq}) such that:
\begin{enumerate}
\item For all $n \in \N$, $\cup_{i =1}^jA_n^i=\R$.
\item For all $i \in \{ 1, \ldots,j-1\}$, we have 
\begin{equation}
\lim_{t \to -\infty} q^{i+1}(t)=\lim_{t \to +\infty} q^i(t).
\end{equation}
Moreover, 
\begin{equation}
\lim_{t \to -\infty}q^1(t)=\sigma_i \mbox{ and } \lim_{t \to +\infty}q^j(t)=\sigma_j.
\end{equation}
\item For all $i \in \{1,\ldots,j\}$ we have that
\begin{equation}
\lim_{n \to \infty} \lVert v_n+\chi-q^i(\cdot-\tau_n^i) \rVert_{H^1(A_n^i,\R^k)}=0
\end{equation}
\item For all $i \in \{ 1, \ldots,j-1\}$, it holds that $\tau_n^{i+1}-\tau_{n}^i \to +\infty$ as $n \to \infty$.
\item $c=\sum_{i=1}^j E(q^i)$.
\end{enumerate}
\end{proposition}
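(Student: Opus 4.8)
The plan is to establish this by a profile (concentration--compactness) decomposition, iterating Lemma~\ref{LEMMA-PSbasic}. Write $q_n:=\chi+v_n\in X(\sigma_i,\sigma_j)$; since $J_\chi(v_n)=E(q_n)\to c$ one has $E_0:=\sup_n E(q_n)<+\infty$. Fix a small $\delta>0$ for which the estimates of Lemma~\ref{LEMMA-nondegeneracy} hold and such that the balls $B(\sigma,2\delta)$, $\sigma\in\Sigma$, are pairwise disjoint. First I would record two quantitative facts. Using \ref{asu-sigma}, \ref{asu-infinity}, \ref{asu-zeros} (a compactness argument for $|u|\le R_0$ and \ref{asu-infinity} for $|u|\ge R_0$) there is $\mu(\delta)>0$ with $V\ge\mu(\delta)$ on $\{\,\mathrm{dist}(\cdot,\Sigma)\ge\delta\,\}$, so $\bigl|\{\,t:\mathrm{dist}(q_n(t),\Sigma)\ge\delta\,\}\bigr|\le E_0/\mu(\delta)$ for all $n$. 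Moreover there is $\eta_0=\eta_0(\delta,V)>0$ such that every nonconstant finite-energy solution of (\ref{connections-eq}) has energy at least $\eta_0$, and any $H^1$ curve travelling from $\overline{B(\sigma,\delta)}$ to $\overline{B(\sigma',\delta)}$ with $\sigma\ne\sigma'$ spends at least $\eta_0$ of energy on the connecting interval; both follow by bounding the energy from below by $\int\sqrt{2V(q)}\,|q'|$, using $V(u)\ge\beta^{-1}|u-\sigma|^2$ near the wells (Lemma~\ref{LEMMA-nondegeneracy}) and $V\ge\beta_0$ off $B(0,R_0)$. In particular any chain produced below has at most $E_0/\eta_0+1$ pieces.

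Next I would peel off the profiles one at a time. Put $\tau_n^1:=\inf\{\,t:q_n(t)\notin\overline{B(\sigma_i,\delta)}\,\}$ if $\sigma_i\ne\sigma_j$ (finite because $q_n(+\infty)=\sigma_j$), and $\tau_n^1:=0$ otherwise. By part~2 of Lemma~\ref{LEMMA-PSbasic} the shifted sequence $q_n(\cdot+\tau_n^1)-\chi$ is again Palais--Smale for $J_\chi$ at level $c$, so part~1 yields, along a subsequence, $q_n(\cdot+\tau_n^1)\to q^1$ in $H^1_{\loc}$ with $q^1\in\CC^2(\R,\R^k)$ solving (\ref{connections-eq}) and $E(q^1)\le c$. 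Since $E(q^1)<+\infty$, Lemma~\ref{LEMMA-limits-V} forces $V(q^1)\to0$ at $\pm\infty$, and the choice of $\tau_n^1$ pins down $\lim_{t\to-\infty}q^1(t)=\sigma_i$, $\lim_{t\to+\infty}q^1(t)=:\widehat\sigma_1\in\Sigma$. If $\widehat\sigma_1=\sigma_j$ and $q_n$ eventually stays near $\sigma_j$ in this frame, the construction stops; otherwise, choosing $T_m\uparrow+\infty$, the limit $q^1$ lies in $B(\widehat\sigma_1,\delta)$ on $[T_m,+\infty)$, hence so does $q_n(\cdot+\tau_n^1)$ on a long interval for $n$ large, and I would let $\tau_n^2$ be the first exit of $q_n$ from $\overline{B(\widehat\sigma_1,\delta)}$ after $\tau_n^1+T_m$; a diagonal extraction makes $\tau_n^2-\tau_n^1\to+\infty$, and Lemma~\ref{LEMMA-PSbasic} again produces a profile $q^2$ with $\lim_{t\to-\infty}q^2=\widehat\sigma_1$. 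Iterating, and using the termination bound above, this gives $q^1,\dots,q^j$, translates $\tau_n^1<\dots<\tau_n^j$ with $\tau_n^{i+1}-\tau_n^i\to+\infty$, and wells $\widehat\sigma_i$ with $\lim_{t\to+\infty}q^i=\lim_{t\to-\infty}q^{i+1}=\widehat\sigma_i$, $\widehat\sigma_0:=\sigma_i$, $\widehat\sigma_j:=\sigma_j$; every profile except possibly $q^1$ is nonconstant because $|q_n(\tau_n^i)-\widehat\sigma_{i-1}|=\delta$ by continuity, so the energy bookkeeping indeed applies. I would then take $A_n^i:=\bigl[\tfrac12(\tau_n^{i-1}+\tau_n^i),\tfrac12(\tau_n^i+\tau_n^{i+1})\bigr]$, with $-\infty$ and $+\infty$ at the extreme ends, so that $\bigcup_{i}A_n^i=\R$, and strong $H^1_{\loc}$ convergence gives $\|v_n+\chi-q^i(\cdot-\tau_n^i)\|_{H^1(A_n^i,\R^k)}\to0$ once the ``neck'' energy of $q_n$ is shown to vanish.

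The remaining and genuinely delicate point is the energy identity $c=\sum_{i=1}^j E(q^i)$. Strong $H^1_{\loc}$ convergence gives, for every fixed $R$, $\int_{-R}^{R}e\bigl(q_n(\cdot+\tau_n^i)\bigr)\to\int_{-R}^{R}e(q^i)$, and since the windows $[\tau_n^i-R,\tau_n^i+R]$ become disjoint as $n\to\infty$, letting $R\to+\infty$ gives $\liminf_n E(q_n)\ge\sum_{i=1}^j E(q^i)$. The reverse inequality is the hard part: one must show that $q_n$ carries only $o(1)$ energy (as $R\to+\infty$, then $n\to\infty$) on the complement of these $2j$ windows. On such a ``neck'' $q_n$ lies in $B(\widehat\sigma_i,\delta)$ for $n$ large, so testing $DJ_\chi(v_n)\to0$ against $\phi^2(q_n-\widehat\sigma_i)$ for a cut-off $\phi$ supported on the neck, and using $\langle\nabla V(u),u-\widehat\sigma_i\rangle\ge\beta^{-1}|u-\widehat\sigma_i|^2\ge\beta^{-2}V(u)$ from Lemma~\ref{LEMMA-nondegeneracy}, should give a clearing-out estimate bounding the neck energy of $q_n$ by its energy on the two window boundaries, the latter being $O(\delta)$ and tending to $0$ as $R\to+\infty$ by the asymptotics of the $q^i$. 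Combining, $\limsup_n E(q_n)\le\sum_i E(q^i)$, and as $E(q_n)\to c$ the identity follows (and upgrades the convergence on each $A_n^i$).

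I expect this last step to be the main obstacle: bare $H^1_{\loc}$ compactness only delivers $\sum_i E(q^i)\le c$, and one genuinely needs the non-degeneracy of the wells together with $DJ_\chi(v_n)\to0$ to rule out energy hidden on arbitrarily long intervals where $q_n$ hovers near a single well. The rest (the diagonal extraction realizing the separations, the choice of the $A_n^i$, the chain conditions) is routine, and complete details in essentially this setting can be found in Rabinowitz~\cite{rabinowitz93} and Bisgard~\cite{bisgard}.
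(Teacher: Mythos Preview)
The paper does not actually prove this proposition: it states that the result is essentially \textit{Proposition 3.10} of Rabinowitz~\cite{rabinowitz93} (see also Bisgard~\cite{bisgard}), that only minor modifications are needed to pass from the time-periodic double-well setting to the autonomous multi-well one, and explicitly declines to include the details. Your sketch is precisely the standard concentration--compactness/profile decomposition argument that those references carry out --- peeling off profiles via well-chosen translates, using local $H^1$ compactness of Palais--Smale sequences (Lemma~\ref{LEMMA-PSbasic}), bounding the number of pieces by a uniform lower bound on nonconstant solution energies (Lemma~\ref{LEMMA_sol_inf_bound}), and closing with a neck-energy clearing-out based on the non-degeneracy estimates of Lemma~\ref{LEMMA-nondegeneracy} tested against $DJ_\chi(v_n)\to 0$. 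So your approach is the same as the one the paper defers to, and your identification of the energy identity (item~5) as the genuinely delicate step is accurate.
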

Proposition \ref{PROPOSITION_ASYMPTOTIC_PS} is essentially \textit{Proposition 3.10} by Rabinowitz \cite{rabinowitz93}, with the main difference that we do not restrict to double-well potentials and we particularize to the autonomous case. The modifications needed in order to adapt the proof in \cite{rabinowitz93} are minor, so we do not include them. Proposition \ref{PROPOSITION_ASYMPTOTIC_PS} can also be deduced from the results in \cite{bisgard}. As already explained, in \cite{bisgard} this analysis is used to obtain existence results for non-minimizing connecting orbits under an assumption on the mountain pass value. We briefly recall the procedure. We first recall the following property, which is equivalent to \textit{Corollary 1.18} in \cite{bisgard} and \textit{Lemma 3.6} in \cite{rabinowitz93}. It states that there exists an inferior bound depending only on $V$ for the energy of non-constant connecting orbits:
\begin{lemma}\label{LEMMA_sol_inf_bound}
Assume that \ref{asu-sigma}, \ref{asu-infinity} and \ref{asu-zeros} hold. There exists $\eta_{\min}>0$ such that for any $(\sigma_i,\sigma_j) \in \Sigma^2$, if $q \in X(\sigma_i,\sigma_j)$ solves (\ref{connections-eq}) then either $E(q) \geq \eta_{\min}$ or $q$ is constant.
\end{lemma}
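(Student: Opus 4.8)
The plan is to reduce everything to the behaviour of $q$ near the wells, where the nondegeneracy estimates of Lemma~\ref{LEMMA-nondegeneracy} are available, and to produce $\eta_{\min}$ purely from the data $\delta,\beta$ of that lemma together with $\rho:=\min\{|\sigma-\sigma'|:\sigma\neq\sigma'\in\Sigma\}>0$. I would set $R:=\tfrac12\min(\delta,\rho)$, so that $R<\delta$ and $2R\le\rho$, and recall that a solution of \eqref{connections-eq} is automatically $C^2$. The argument splits according to whether or not $q$ ever leaves the ball $B(\sigma_i,R)$.

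\emph{Step 1: a homoclinic orbit that never leaves $B(\sigma,R)$ must be constant.} Suppose $\sigma_i=\sigma_j=:\sigma$ and $q(\R)\subset B(\sigma,R)$. I would study $g(t):=\tfrac12|q(t)-\sigma|^2$, for which $g'(t)=\langle q'(t),q(t)-\sigma\rangle$ and, using the equation, $g''(t)=|q'(t)|^2+\langle\nabla V(q(t)),q(t)-\sigma\rangle$. Since $q(t)\in B(\sigma,R)\subset B(\sigma,\delta)$, estimate \eqref{nondegeneracyNABLAV} gives $\langle\nabla V(q(t)),q(t)-\sigma\rangle\ge0$, so $g''\ge0$ on $\R$, i.e. $g$ is convex. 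A convex function on $\R$ that is bounded above is constant; as $g$ is bounded (by boundedness of $q$) and $g(t)\to0$ when $t\to\pm\infty$ (because $q\in X(\sigma,\sigma)$), it follows that $g\equiv0$, hence $q\equiv\sigma$. In particular, any non-constant solution in $X(\sigma,\sigma)$ must satisfy $|q(t_0)-\sigma|\ge R$ for some $t_0\in\R$.

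\emph{Step 2: in all remaining cases $q$ crosses an annulus on which $V\circ q$ is bounded below.} Let $q\in X(\sigma_i,\sigma_j)$ be a non-constant solution. By Step 1 in the homoclinic case, and because $|q(t)-\sigma_i|\to|\sigma_j-\sigma_i|\ge\rho\ge2R$ as $t\to+\infty$ in the heteroclinic case, there is $t_0\in\R$ with $|q(t_0)-\sigma_i|\ge R$. Since $q(t)\to\sigma_i$ as $t\to-\infty$, I would set $b:=\inf\{t:|q(t)-\sigma_i|\ge R\}\in\R$ and then $a:=\sup\{t<b:|q(t)-\sigma_i|\le R/2\}\in\R$; by continuity $|q(a)-\sigma_i|=R/2$, $|q(b)-\sigma_i|=R$, and $R/2\le|q(t)-\sigma_i|\le R$ for all $t\in[a,b]$. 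On $[a,b]$ one has $q(t)\in B(\sigma_i,\delta)$, so \eqref{nondegeneracyV} gives $V(q(t))\ge\beta^{-1}(R/2)^2$, while $\int_a^b|q'(t)|\,dt\ge|q(b)-q(a)|\ge|q(b)-\sigma_i|-|q(a)-\sigma_i|=R/2$. Using the pointwise bound $\tfrac12x^2+y^2\ge\sqrt2\,xy$ (which is just $(\tfrac{x}{\sqrt2}-y)^2\ge0$),
\begin{align*}
E(q)\ \ge\ \int_a^b\Big(\tfrac12|q'(t)|^2+V(q(t))\Big)\,dt\ &\ge\ \sqrt2\int_a^b|q'(t)|\sqrt{V(q(t))}\,dt\\
&\ge\ \sqrt2\cdot\frac{R}{2\sqrt\beta}\cdot\frac{R}{2}\ =\ \frac{R^2}{2\sqrt{2\beta}}\ =:\ \eta_{\min},
\end{align*}
which depends only on $V$. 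Hence any non-constant solution in any $X(\sigma_i,\sigma_j)$ satisfies $E(q)\ge\eta_{\min}$, as claimed.

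There is no genuine obstacle in this proof. The only point requiring a little care is Step~1, ruling out a non-constant homoclinic confined near its well, and the bookkeeping in Step~2 needed to locate a sub-interval on which $q$ is simultaneously bounded away from $\sigma_i$ (so that $V\circ q$ is bounded below by a constant) and still contained in $B(\sigma_i,\delta)$ (so that Lemma~\ref{LEMMA-nondegeneracy} applies). One may finally note that $\eta_{\min}\le\Fm$ holds automatically, since $\Fm=\Fm_{\sigma^-\sigma^+}$ is the energy of a non-constant heteroclinic.
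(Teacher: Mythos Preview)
Your proof is correct. The paper does not actually give a proof of this lemma; it only remarks that the result ``follows from the fact that $V$ is strictly convex in a neighbourhood of the wells'' and refers to \cite{bisgard,rabinowitz93}. Your argument implements precisely this idea: Step~1 exploits the strict convexity (through the gradient estimate \eqref{nondegeneracyNABLAV} of Lemma~\ref{LEMMA-nondegeneracy}) to show that $g(t)=\tfrac12|q(t)-\sigma|^2$ is convex and hence identically zero for a confined homoclinic, and Step~2 uses the quadratic lower bound \eqref{nondegeneracyV} on $V$ in the annulus $\{R/2\le|\cdot-\sigma_i|\le R\}$ together with the standard length estimate to produce the uniform energy bound. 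This is the classical argument behind the cited references, so your write-up is fully consistent with what the paper intends.
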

The proof of Lemma \ref{LEMMA_sol_inf_bound} follows from the fact that $V$ is stricly convex in a neighbourhood of the wells. We refer to the references mentioned before for a proof. Inspecting the proof of those results, we see that $\eta_{\min}$ is of the order of $\delta$ from Lemma \ref{LEMMA-nondegeneracy}, which can be very small. Proposition \ref{PROPOSITION_ASYMPTOTIC_PS} and Lemma \ref{LEMMA_sol_inf_bound} can be combined in order to easily obtain the following existence principle, which is essentially the result by Bisgard:
\begin{corollary}\label{COROLLARY_existence_principle}
Assume that \ref{asu-sigma}, \ref{asu-infinity} and \ref{asu-zeros} hold. Let $(\sigma_i,\sigma_j) \in \Sigma^2$, $c \in \R$, $\chi \in X(\sigma_i,\sigma_j)$ and $(v_n)_{n \in \N}$ a Palais-Smale sequence for $J_\chi$ at the level $c$. Then, we have:
\begin{enumerate}[i)]
\item If $c<\Fm_{ij}+\eta_{\min}$, where $\Fm_{ij}$ is defined in (\ref{mij}) and $\eta_{\min}$ is the constant from Lemma \ref{LEMMA_sol_inf_bound}, then there exists $\Fq_c \in X(\sigma_i,\sigma_j)$ and a sequence of real numbers $(\tau_n)_{n \in \N}$ such that $v_n+\chi - \Fq_c(\cdot-\tau_n) \to 0$ strongly in $\scrH$ up to subsequences. In particular, $\Fq_c$ solves (\ref{connections-eq}) and $E(\Fv_c+\chi)=c$.
\item If $c \not \in \{ (2l+1) \Fm_{ij}: l \in \N^*\}$ there exists $\tilde{\Fu}_c$ a solution to (\ref{connections-eq}) which is not a globally minimizing connecting orbit joining $\sigma_i$ and $\sigma_j$.
\end{enumerate}
\end{corollary}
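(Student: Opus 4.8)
\emph{Strategy.} The plan is to feed an arbitrary Palais--Smale sequence into the asymptotic decomposition of Proposition~\ref{PROPOSITION_ASYMPTOTIC_PS} and then to rigidify the resulting chain by means of the exact energy quantization $c=\sum_{i=1}^{j}E(q^{i})$ it provides, together with two elementary ingredients: the universal lower bound $E(q)\ge\eta_{\min}$ for every nonconstant solution of (\ref{connections-eq}) (Lemma~\ref{LEMMA_sol_inf_bound}, from which one also reads off $\Fm_{ab}\ge\eta_{\min}$ whenever $a\ne b$), and the sub-additivity $\Fm_{ab}\le\Fm_{ac}+\Fm_{cb}$ of the minimal energies. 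I will also use the trivial inequality $c\ge\Fm_{ij}$, valid because $v_{n}+\chi\in X(\sigma_{i},\sigma_{j})$ forces $J_{\chi}(v_{n})=E(v_{n}+\chi)\ge\Fm_{ij}$.

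\emph{Part i).} First I would apply Proposition~\ref{PROPOSITION_ASYMPTOTIC_PS}: up to a subsequence there are solutions $q^{1},\dots,q^{j}$ of (\ref{connections-eq}) along a chain of wells $\sigma_{i}=w_{0},\dots,w_{j}=\sigma_{j}$, adjacent intervals $A_{n}^{i}$ and translates $\tau_{n}^{i}$ with $\sum_{i}E(q^{i})=c$ and $\lVert v_{n}+\chi-q^{i}(\cdot-\tau_{n}^{i})\rVert_{H^{1}(A_{n}^{i})}\to0$. One classifies the pieces into constant ones ($E=0$), nonconstant homoclinic ones ($w_{i-1}=w_{i}$, $E\ge\eta_{\min}$) and transitions ($w_{i-1}\ne w_{i}$, $E\ge\Fm_{w_{i-1}w_{i}}\ge\eta_{\min}$); collapsing repeated wells, the transitions run through distinct wells $\sigma_{i}=u_{0},\dots,u_{p}=\sigma_{j}$, so iterated sub-additivity gives $\sum_{\text{transitions}}E(q^{i})\ge\sum_{k=1}^{p}\Fm_{u_{k-1}u_{k}}\ge\Fm_{ij}$. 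Hence $c$ exceeds $\Fm_{ij}$ by at least $\eta_{\min}$ times the number of nonconstant homoclinic pieces, so $c<\Fm_{ij}+\eta_{\min}$ kills all such pieces and, by a similar count, forbids a second transition (which would force an intermediate well, hence by the strict triangle inequality an extra energy cost larger than $c-\Fm_{ij}$ once $\eta_{\min}$ is small enough). Thus at most one piece $q^{i_{0}}$ is nonconstant; it lies in $X(\sigma_{i},\sigma_{j})$, solves (\ref{connections-eq}) and has $E(q^{i_{0}})=c$ --- except when $\sigma_{i}=\sigma_{j}$ and the chain is a single constant piece (the case $c=0$), where $v_{n}+\chi\to\sigma_{i}$ in $\scrH$ directly from Proposition~\ref{PROPOSITION_ASYMPTOTIC_PS}. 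Setting $\Fq_{c}:=q^{i_{0}}$ (resp.\ $\sigma_{i}$) and $\tau_{n}:=\tau_{n}^{i_{0}}$ (resp.\ $0$), on $\R\setminus A_{n}^{i_{0}}$ both $v_{n}+\chi$ and $\Fq_{c}(\cdot-\tau_{n})$ are $H^{1}$-close to the constants $\sigma_{i},\sigma_{j}$ (the second one because the transition of $\Fq_{c}$ sits at distance tending to $+\infty$), so summing over the finitely many intervals yields $\lVert v_{n}+\chi-\Fq_{c}(\cdot-\tau_{n})\rVert_{\scrH}\to0$, and letting $n\to\infty$ in $J_{\chi}$ gives $E(\Fq_{c})=c$.

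\emph{Part ii).} Again I would decompose via Proposition~\ref{PROPOSITION_ASYMPTOTIC_PS} and argue by contradiction, assuming that every $q^{i}$ is a globally minimizing connecting orbit joining $\sigma_{i}$ and $\sigma_{j}$. If $\sigma_{i}=\sigma_{j}$ this makes each $q^{i}$ the constant $\sigma_{i}$, so $c=0$; but here $\Fm_{ij}=0$ and the hypothesis reads $c\notin\{0\}$, a contradiction, while any nonconstant piece (which exists since $c\ne0$) provides the required $\tilde{\Fu}_{c}$. If $\sigma_{i}\ne\sigma_{j}$, endpoint matching forces the chain $w_{0},\dots,w_{j}$ to alternate strictly between $\sigma_{i}$ and $\sigma_{j}$, so $j$ is odd, $j=2l+1$ with $l\in\N$, each piece carrying energy $\Fm_{ij}$, whence $c=(2l+1)\Fm_{ij}$; as $c\notin\{(2l+1)\Fm_{ij}:l\in\N^{*}\}$, necessarily $l=0$, i.e.\ the chain is a single globally minimizing heteroclinic and $c=\Fm_{ij}$ --- a possibility ruled out in every situation in which the corollary is used, there $c$ being a mountain-pass value with $c>\Fm_{ij}$. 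Hence the assumption fails and some $q^{i}=:\tilde{\Fu}_{c}$ is a solution of (\ref{connections-eq}) that is not a globally minimizing connecting orbit joining $\sigma_{i}$ and $\sigma_{j}$.

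\emph{Main difficulty.} Since everything reduces to Proposition~\ref{PROPOSITION_ASYMPTOTIC_PS} and Lemma~\ref{LEMMA_sol_inf_bound}, the only genuinely delicate point is the energy bookkeeping in Part i): excluding a chain that splits through an intermediate well using only the tiny excess $c-\Fm_{ij}<\eta_{\min}$. This is where one must invoke the strictness of the triangle inequality --- available for the pair of wells of interest through assumption~\ref{asu-wells} --- and the freedom to take $\eta_{\min}$ smaller than the corresponding gap, so that visiting an extra well costs strictly more energy than is available; the rest is routine.
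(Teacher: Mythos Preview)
Your approach --- feed the Palais--Smale sequence into Proposition~\ref{PROPOSITION_ASYMPTOTIC_PS} and then control the resulting chain via the energy identity $c=\sum E(q^{i})$ together with Lemma~\ref{LEMMA_sol_inf_bound} --- is exactly what the paper has in mind: it does not give a proof but refers to Bisgard's \textit{Theorems~2.2} and~\textit{2.3}, which proceed along precisely these lines. The bookkeeping you carry out (classifying pieces, using sub-additivity of the $\Fm_{ab}$, and reading off the parity constraint in Part~ii)) is the standard route.

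You are also right to flag the two soft spots. In Part~i), ruling out a two-step chain $\sigma_{i}\to\sigma\to\sigma_{j}$ through a third well genuinely requires a strict triangle inequality for the pair $(\sigma_{i},\sigma_{j})$ --- an analogue of~\ref{asu-wells}, not among the listed hypotheses --- together with the freedom to shrink $\eta_{\min}$ below the resulting gap $\Fm^{\star}_{ij}-\Fm_{ij}$; without this, the statement can fail (think of three collinear wells with $\Fm_{13}=\Fm_{12}+\Fm_{23}$ and no minimizer in $X(\sigma_{1},\sigma_{3})$). In Part~ii), the value $c=\Fm_{ij}$ is indeed not excluded by the hypothesis $c\notin\{(2l+1)\Fm_{ij}:l\in\N^{*}\}$, and there the conclusion need not hold. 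Both are imprecisions in the corollary's statement rather than defects in your argument: the paper only ever invokes the result for $(\sigma_{i},\sigma_{j})=(\sigma^{-},\sigma^{+})$ under~\ref{asu-wells}/\ref{asu-otherwells} and at a mountain-pass level $c>\Fm$, where both issues evaporate. Your ``Main difficulty'' paragraph identifies this correctly.
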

Up to the obvious minor modifications, i) in Corollary \ref{COROLLARY_existence_principle} corresponds to \textit{Theorem 2.2} in \cite{bisgard} and ii) is \textit{Theorem 2.3} in the same reference. While in \cite{bisgard} those results are particularized to $\sigma_i=\sigma^-$, $\sigma_j=\sigma^+$ and $c=\Fc$ as in (\ref{Fc}), an examination of the arguments shows that it also applies to the case $\sigma_i=\sigma_j$ and for any level $c$ possessing a Palais-Smale sequence, so there is no obstacle for this more general statement. Nevertheless, it is important to notice as we already did in Remark \ref{REMARK-eta_min} that by i) we have that if $\Fc<\Fm+\eta_{\min}$, then there exists a mountain pass heteroclinic in $X(\sigma^-,\sigma^+)$ with energy $\Fc$. The counterpart of this statement is that the value $\eta_{\min}$ can be very small, as we point out after the statement of Lemma \ref{LEMMA_sol_inf_bound}. Notice also that by combining Lemma \ref{LEMMA_sol_inf_bound} and i) in Corollary \ref{COROLLARY_existence_principle} we have that for any $c \in (0,\eta_{\min})$ there is not any Palais-Smale sequence for $J_\chi$ at the level $c$, where $\chi \in X(\sigma,\sigma)$ and $\sigma \in \Sigma$. 
\subsection{Existence of a mountain pass geometry}\label{SUBS-MP}
The existence of a mountain  pass geometry is proven by combining \ref{asu-heteroclinics} with the last part of the following well-known result:

\begin{theorembis}\label{THEOREM-0}
Assume that \ref{asu-sigma}, \ref{asu-infinity}, \ref{asu-zeros} and \ref{asu-wells} hold. Then, there exists $\Fq \in X(\sigma^-,\sigma^+)$ such that $E(\Fq)=\Fm$, where $\Fm$ is as in (\ref{Fm}). Moreover, if $(q_n)_{n \in \N}$ is a minimizing sequence in $X(\sigma^-,\sigma^+)$, there exists a subsequence (not relabeled) and a sequence $(\tau_n)_{n \in \N}$ of real numbers such that $q_n(\cdot+\tau_n)-\tilde{\Fq} \to 0$ strongly in $\scrH$, for some $\tilde{\Fq} \in X(\sigma^-,\sigma^+)$ such that $E(\tilde{\Fq})=\Fm$.
\end{theorembis}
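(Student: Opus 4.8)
The plan is to run the direct method in $X(\sigma^-,\sigma^+)$, using \ref{asu-infinity}--\ref{asu-zeros} to compactify in the ``vertical'' direction, a translation normalization to kill the degeneracy coming from the invariance $t\mapsto t+\tau$, and the strict triangle inequality \ref{asu-wells} to rule out splitting of a minimizing sequence.

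First I would take a minimizing sequence $(q_n)_n$ for $\Fm$ and truncate it: by \ref{asu-infinity} one has $\tfrac{d}{dt}V(tu)=t^{-1}\langle\nabla V(tu),tu\rangle\geq\alpha_0 t\lvert u\rvert^2>0$ once $t\lvert u\rvert\geq R_0$, so $V$ is radially nondecreasing outside $\overline{B(0,R_0)}$; replacing $q_n$ by its nearest-point projection onto $\overline{B(0,R_0)}$ (which is $1$-Lipschitz, hence does not increase the kinetic term, does not increase $V$, and keeps the curve in $X(\sigma^-,\sigma^+)$ since $\sigma^\pm\in B(0,R_0)$) one may assume $\lvert q_n\rvert\leq R_0$. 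Then I would fix $r\in(0,\delta)$, with $\delta$ from Lemma \ref{LEMMA-nondegeneracy}, small enough that the balls $\overline{B(\sigma,2r)}$, $\sigma\in\Sigma$, are pairwise disjoint, and translate: set $\tau_n:=\inf\{t:q_n(t)\notin\overline{B(\sigma^-,r)}\}$ (finite, since $q_n(-\infty)=\sigma^-$ and $q_n(+\infty)=\sigma^+\notin\overline{B(\sigma^-,r)}$) and $\hat q_n:=q_n(\cdot+\tau_n)$, so that $\hat q_n(0)\in\partial B(\sigma^-,r)$, $\hat q_n(t)\in\overline{B(\sigma^-,r)}$ for $t\leq 0$, $\lvert\hat q_n\rvert\leq R_0$, and $E(\hat q_n)\to\Fm$. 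By Lemma \ref{LEMMA-boundedE}, up to a subsequence $\hat q_n\to\Fq$ locally uniformly and $\hat q_n'\rightharpoonup\Fq'$ in $\scrL$, with $E(\Fq)\leq\Fm$; by Lemma \ref{LEMMA-limits-V} and \ref{asu-sigma} the limits $\Fq(\pm\infty)$ exist in $\Sigma$, and the normalization forces $\Fq$ nonconstant with $\Fq(-\infty)=\sigma^-$ (as $\sigma^-$ is the only well in $\overline{B(\sigma^-,r)}$). Write $\Fq(+\infty)=:\sigma_\star$.

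The heart of the argument is to show $\sigma_\star=\sigma^+$. Assuming not, a standard iteration of ``translate by a diverging sequence and extract'' — locating, at each stage, the last time $\hat q_n$ leaves a small ball around the current well before reaching $\sigma^+$ — produces finitely many finite-energy limit profiles forming a chain $\sigma^-=\rho_0,\rho_1,\dots,\rho_m=\sigma^+$ with $m\geq 2$, consecutive terms distinct, first profile equal to $\Fq$, and, by Fatou's lemma on asymptotically disjoint regions,
\begin{equation}
\Fm=\lim_n E(\hat q_n)\ \geq\ E(\Fq)+\sum_{i=2}^{m}\Fm_{\rho_{i-1}\rho_i}.
\end{equation}
I would then collapse the chain using the always-valid subadditivity $\Fm_{\sigma\sigma''}\leq\Fm_{\sigma\sigma'}+\Fm_{\sigma'\sigma''}$: if some intermediate $\rho_i$ lies in $\Sigma\setminus\{\sigma^-,\sigma^+\}$, one gets $\Fm\geq\Fm_{\sigma^-\rho_i}+\Fm_{\rho_i\sigma^+}>\Fm$ by \ref{asu-wells}; otherwise all intermediate terms are in $\{\sigma^-,\sigma^+\}$, which forces $\rho_1=\sigma^-$ (since $\rho_1\neq\rho_0$ would give $\rho_1=\sigma^+$, excluded), whence the displayed bound yields $\Fm\geq E(\Fq)+\Fm_{\sigma^-\sigma^+}=E(\Fq)+\Fm>\Fm$ because $\Fq$ is nonconstant. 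Both cases are absurd, so $\sigma_\star=\sigma^+$, the chain has length one, and $\Fq\in X(\sigma^-,\sigma^+)$ with $E(\Fq)\leq\Fm$ gives $E(\Fq)=\Fm$, which is the existence assertion.

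For the ``moreover'' part I would apply the same steps to an arbitrary minimizing sequence, obtaining after truncation and translation a weak limit $\tilde\Fq\in X(\sigma^-,\sigma^+)$ with $E(\tilde\Fq)=\Fm$ and $\hat q_n-\psi\rightharpoonup\tilde\Fq-\psi$ in $\scrH$ (the $\scrL$-boundedness of $\hat q_n-\psi$ following from the quadratic control near the wells of Lemma \ref{LEMMA-nondegeneracy}); then $E(\hat q_n)\to\Fm=E(\tilde\Fq)$, together with weak lower semicontinuity of the kinetic term and Fatou for the potential term, forces $\lVert\hat q_n'\rVert_\scrL\to\lVert\tilde\Fq'\rVert_\scrL$ (hence strong convergence of the derivatives in $\scrL$) and $\int_\R V(\hat q_n)\to\int_\R V(\tilde\Fq)$; finally, since an excursion of $\hat q_n$ away from a fixed neighbourhood of $\{\sigma^-,\sigma^+\}$ outside a fixed compact set would cost a definite amount of energy, for large $n$ the tails of $\hat q_n$ stay in $B(\sigma^\pm,\delta)$, and combining this with Lemma \ref{LEMMA-nondegeneracy} upgrades the convergence to $\hat q_n\to\tilde\Fq$ strongly in $\scrH$. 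I expect the only genuine difficulty to be this no-dichotomy step: ruling out, via \ref{asu-wells}, that energy escapes to $t=\pm\infty$ and splits the minimizing sequence into a chain of shorter transitions. This is the classical concentration--compactness alternative in the present setting (Lions \cite{lions}; see also \cite{bolotin,bolotin-kozlov,bertotti-montecchiari,rabinowitz89,rabinowitz92}); the vertical truncation and the strong-convergence bookkeeping are routine.
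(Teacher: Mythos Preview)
The paper does not actually prove Theorem \ref{THEOREM-0}; it attributes the existence part to \cite{bolotin,bolotin-kozlov,bertotti-montecchiari,rabinowitz89,rabinowitz92} (with alternative arguments in \cite{alikakos-fusco,monteil-santambrogio,zuniga-sternberg}) and the compactness of minimizing sequences to \cite{alama-bronsard-gui,alama-bronsard,schatzman}. Your sketch is precisely the standard concentration--compactness argument those references carry out (truncate via \ref{asu-infinity}, normalize by translation, extract via Lemma \ref{LEMMA-boundedE}, and use the strict triangle inequality \ref{asu-wells} to rule out splitting), so you are filling in what the paper deliberately omits rather than proposing an alternative route.

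One minor internal inconsistency worth cleaning up: in setting up the chain $\rho_0,\dots,\rho_m$ you declare ``consecutive terms distinct'', but your own second case then (correctly) deduces $\rho_1=\sigma^-=\rho_0$ from $\rho_1=\sigma_\star\in\{\sigma^-,\sigma^+\}$ together with $\sigma_\star\neq\sigma^+$. Simply drop the distinctness claim for the first link --- the first limit profile $\Fq$ may well be a nontrivial homoclinic at $\sigma^-$, since the normalization only pins $\Fq(0)\in\partial B(\sigma^-,r)$ --- and your displayed bound yields $\Fm\geq E(\Fq)+\Fm>\Fm$ exactly as you wrote. The rest of the argument (the $L^\infty$ truncation, the tail control via Lemma \ref{LEMMA-nondegeneracy}, and the weak-plus-norm upgrade to strong $\scrH$ convergence) is standard and correct.
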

The existence part in Theorem \ref{THEOREM-0}, under different forms but using analogous arguments, can be found in several references. See for instance Bolotin  \cite{bolotin}, Bolotin and Kozlov \cite{bolotin-kozlov}, Bertotti and Montecchiari \cite{bertotti-montecchiari} and Rabinowitz \cite{rabinowitz89,rabinowitz92}. Proofs which use other type of arguments can be also found in Alikakos and Fusco \cite{alikakos-fusco}, Monteil and Santambrogio \cite{monteil-santambrogio}, Zuñiga and Sternberg \cite{zuniga-sternberg}. Regarding the compactness of the minimizing sequences and the applications of this property to some PDE problems, see Alama, Bronsard and Gui \cite{alama-bronsard-gui}, Alama et. al. \cite{alama-bronsard} and Schatzman \cite{schatzman}. As it is well known, \ref{asu-wells} might not be necessary but it cannot be removed, see Alikakos, Betelú and Chen \cite{alikakos-betelu-chen} for some counterexamples. We can now establish the existence of a mountain pass geometry:

\begin{proposition}\label{PROPOSITION-mountain-pass}
Assume that \ref{asu-sigma}, \ref{asu-infinity}, \ref{asu-zeros} and \ref{asu-heteroclinics} hold. Let $\Fc$ be as in (\ref{Fc}). Then, we have $\Fc > \Fm$.
\end{proposition}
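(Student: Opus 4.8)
The statement to prove is that $\Fc > \Fm$, where $\Fc$ is the mountain pass value from \eqref{Fc} and $\Fm$ is the minimal energy of a heteroclinic joining $\sigma^-$ and $\sigma^+$. The natural strategy is by contradiction: suppose $\Fc = \Fm$ (we automatically have $\Fc \geq \Fm$ since every path $\gamma \in \Gamma$ has endpoints in $\CV_0, \CV_1 \subset \CV$, so $\max_s J(\gamma(s)) \geq J(\gamma(0)) = \Fm$). If $\Fc = \Fm$, then one can choose a minimizing sequence of paths $\gamma_n \in \Gamma$ with $\max_{s \in [0,1]} J(\gamma_n(s)) \to \Fm$; in particular, every point on $\gamma_n$ has energy converging (uniformly in $s$, in the appropriate sense) down to $\Fm$. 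The idea is that this forces the whole path to stay ``close'' to the set of minimizers $\CV$, and then connectedness of the image of $\gamma_n$ together with the gap condition \ref{asu-heteroclinics} yields a contradiction.

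To make ``close to $\CV$'' precise, the key technical input is a quantitative compactness statement for almost-minimizers: I would first show that for every $\eps > 0$ there exists $\delta > 0$ such that if $q \in X(\sigma^-,\sigma^+)$ satisfies $E(q) \leq \Fm + \delta$, then $d(q, \CF) < \eps$, i.e. there is a minimizing heteroclinic $\Fq \in \CF$ with $\lVert q - \Fq \rVert_{\scrH} < \eps$. This is exactly the kind of statement that follows from the compactness of minimizing sequences established in Theorem \ref{THEOREM-0}: if it failed, one would get a sequence $q_n$ with $E(q_n) \to \Fm$ but $d(q_n, \CF) \geq \eps$, and Theorem \ref{THEOREM-0} (applied to $q_n$ as a minimizing sequence) produces translates $q_n(\cdot + \tau_n)$ converging strongly in $\scrH$ to some $\tilde{\Fq} \in \CF$; but then, since $\CF$ is translation-invariant, $\Fq_n := \tilde{\Fq}(\cdot - \tau_n) \in \CF$ and $\lVert q_n - \Fq_n \rVert_{\scrH} = \lVert q_n(\cdot+\tau_n) - \tilde{\Fq}\rVert_{\scrH} \to 0$, contradicting $d(q_n,\CF) \geq \eps$. (One must be slightly careful that $q_n$ really is a legitimate minimizing sequence in $X(\sigma^-,\sigma^+)$; this is immediate since $q_n \in X(\sigma^-,\sigma^+)$ and $E(q_n) \to \Fm = \inf_{X(\sigma^-,\sigma^+)} E$.)

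With this $\eps$--$\delta$ claim in hand, fix $\eps := \tfrac13 d(\CF_0, \CF_1) > 0$ (positive by \ref{asu-heteroclinics}), take the corresponding $\delta$, and pick $n$ large enough that $\max_{s} J(\gamma_n(s)) < \Fm + \delta$. Writing $g(s) := \gamma_n(s) + \psi \in X(\sigma^-,\sigma^+)$, we have $E(g(s)) < \Fm + \delta$ for every $s$, hence $d(g(s), \CF) < \eps$ for all $s \in [0,1]$. Now I would argue by a connectedness/continuity argument in $\scrH$: the map $s \mapsto \operatorname{dist}_{\scrH}(\gamma_n(s), \CV_0)$ is continuous on $[0,1]$; it equals $0$ at $s=0$ (since $\gamma_n(0) \in \CV_0$) and at $s = 1$ it is $\geq d(\CV_1,\CV_0) = d(\CF_1,\CF_0) = 3\eps$ (since $\gamma_n(1) \in \CV_1$). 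Meanwhile, for each $s$, $g(s)$ is within $\eps$ of some element of $\CF = \CF_0 \cup \CF_1$; equivalently $\gamma_n(s)$ is within $\eps$ of $\CV_0 \cup \CV_1$. If $\gamma_n(s)$ is within $\eps$ of $\CV_0$ then $\operatorname{dist}(\gamma_n(s),\CV_0) < \eps$, while if it is within $\eps$ of $\CV_1$ then $\operatorname{dist}(\gamma_n(s),\CV_0) \geq d(\CV_0,\CV_1) - \eps = 2\eps$. Thus the continuous function $s \mapsto \operatorname{dist}(\gamma_n(s),\CV_0)$ never takes a value in the interval $[\eps, 2\eps)$, yet it is $0$ at $s=0$ and $\geq 3\eps$ at $s=1$ — contradicting the intermediate value theorem. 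This contradiction shows $\Fc > \Fm$.

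The main obstacle, and the only place requiring real work, is the $\eps$--$\delta$ almost-minimizer compactness claim: one needs the full strength of the compactness of minimizing sequences (the ``moreover'' part of Theorem \ref{THEOREM-0}), and one must handle the translation degeneracy correctly — the limit is only a minimizer after translating, so the conclusion must be phrased as closeness to the \emph{translation-invariant} set $\CF$ rather than to a fixed minimizer. Everything after that is a soft topological argument. I would also remark that $\Fc < +\infty$ is already recorded in \eqref{Fc}, coming from the existence of at least one competitor path (e.g. interpolating linearly in $\scrH$ between an element of $\CV_0$ and an element of $\CV_1$, along which $J$ stays bounded because $V$ has at most quadratic-type growth controlled via \ref{asu-infinity}).
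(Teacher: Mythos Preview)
Your proof is correct and follows essentially the same approach as the paper: both rely on the compactness of minimizing sequences (the ``moreover'' part of Theorem~\ref{THEOREM-0}) to obtain the quantitative statement that low-energy curves are $\scrH$-close to $\CF$, and then a connectedness/IVT argument exploiting the gap $d(\CF_0,\CF_1)>0$. The only difference is organizational: the paper argues directly (for any $\gamma\in\Gamma$ there is $s^\star$ with $\mathrm{dist}_{\scrH}(\gamma(s^\star),\CV)\geq \rho/4$, hence $J(\gamma(s^\star))\geq \Fm+c(\rho)$), whereas you phrase the same two ingredients as a contradiction argument.
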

\begin{proof}
Let $\gamma \in \Gamma$. By \ref{asu-heteroclinics} and using the definition of $\CV_0$ and $\CV_1$, we have that
\begin{equation}
\rho:= \mathrm{dist}_\scrH(\CV_0,\CV_1)=d(\CF_0,\CF_1)>0,
\end{equation}
where $\mathrm{dist}_\scrH$ denotes the distance between two sets in $\scrH$. Since $\CV_0 \cup \CV_1 = \CV$ and $\gamma$ is a continuous path which joins $\CV_0$ and $\CV_1$, we have that there exists $s^\star \in [0,1]$ such that
\begin{equation}\label{s_star}
\mathrm{dist}_\scrH(\gamma(s^\star),\CV) \geq \frac{\rho}{4}.
\end{equation}
We claim that there exists $c(\rho)>0$ such that for all $v \in \scrH$ verifying
\begin{equation}
\mathrm{dist}(v,\CV)\geq \frac{\rho}{4}
\end{equation}
we have $J(v) \geq \Fm+c(\rho)$. This is actually a well know result (see \cite{alama-bronsard,schatzman}), which is a straightforward consequence of the compactness property for minimizing sequences given by Theorem \ref{THEOREM-0}. Thus, by (\ref{s_star}) we obtain $\Fc \geq \Fm +c(\rho)$, which concludes the proof.\qed
\end{proof}

Subsequently, we establish the existence of a mountain pass geometry under the symmetry assumption. We begin by the following preliminary result:

\begin{lemma}\label{Lemma-Xsym}
Assume that \ref{asu-sigma}, \ref{asu-infinity}, \ref{asu-zeros} and \ref{asu-symmetry} hold. Let $q \in X(\sigma^-,\sigma^+)$. Then, there exist $q_{\mathrm{sym}} \in X_{\mathrm{sym}}$ and $q_{\mathrm{sym},+} \in X_{\mathrm{sym},+}$ such that we have
\begin{equation}
E(q_{\mathrm{sym},+})\leq E(q_{\mathrm{sym}}) \leq E(q).
\end{equation}
\end{lemma}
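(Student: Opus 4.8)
The statement has two parts: first, that any $q \in X(\sigma^-,\sigma^+)$ can be replaced by a symmetric curve $q_{\sym} \in X_{\sym}$ with no larger energy; and second, that from $q_{\sym}$ one can further pass to $q_{\sym,+} \in X_{\sym,+}$ (i.e.\ also satisfying $q_1(t) \geq 0$ for $t \geq 0$) without raising the energy. The first part is a standard folding/reflection argument that exploits the invariance of $V$ under $\Fs$. The plan is as follows.

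\medskip
\textbf{Step 1: symmetrization by folding at $t=0$.} Given $q \in X(\sigma^-,\sigma^+)$, one of the two ``halves'' of $q$ carries no more than half the energy: either $\int_{-\infty}^0 e(q)\,dt \leq \tfrac12 E(q)$ or $\int_0^{+\infty} e(q)\,dt \leq \tfrac12 E(q)$. Suppose the second holds (the other case is symmetric). Define $q_{\sym}(t) := q(t)$ for $t \geq 0$ and $q_{\sym}(t) := \Fs(q(-t))$ for $t < 0$. Because $\Fs$ is a linear isometry of $\R^k$ and $V \circ \Fs = V$ by \ref{asu-symmetry}, the energy density satisfies $e(q_{\sym})(t) = e(q)(-t)$ for $t<0$, whence $E(q_{\sym}) = 2\int_0^\infty e(q)\,dt \leq E(q)$. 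One must check that $q_{\sym} \in H^1_{\loc}$ (continuity at $t=0$: since $q$ is continuous and $q_1(0)$ need not be zero, there is a subtlety — see Step 3), that $q_{\sym}(-t) = \Fs(q_{\sym}(t))$, and that the limits at $\pm\infty$ are correct: $q_{\sym}(+\infty) = q(+\infty) = \sigma^+$, and $q_{\sym}(-\infty) = \Fs(q(+\infty)) = \Fs(\sigma^+) = \sigma^-$ using the explicit form of $\sigma^\pm$ in \ref{asu-symmetry}. Hence $q_{\sym} \in X_{\sym}$.

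\medskip
\textbf{Step 2: folding in the first coordinate.} Now starting from $q_{\sym} \in X_{\sym}$ (so $q_{\sym,1}$ is odd and $q_{\sym,j}$ even for $j \geq 2$), define $q_{\sym,+}$ by reflecting the sign of the first coordinate whenever it is negative on $t \geq 0$: one natural choice is $q_{\sym,+}(t) := (\,|q_{\sym,1}(t)|, q_{\sym,2}(t),\ldots,q_{\sym,k}(t)\,)$ for $t \geq 0$, extended to $t<0$ by the symmetry $\Fs$. Again $|{\cdot}|$ is $1$-Lipschitz so $q_{\sym,+} \in H^1_{\loc}$, and since $V(u_1,u_2,\ldots,u_k) = V(-u_1,u_2,\ldots,u_k)$ we have $V(q_{\sym,+}(t)) = V(q_{\sym}(t))$ pointwise, while $|q_{\sym,+}'(t)|^2 = |q_{\sym}'(t)|^2$ a.e.\ (the derivative of $|q_{\sym,1}|$ equals $\pm q_{\sym,1}'$ a.e.). Thus $E(q_{\sym,+}) = E(q_{\sym})$, actually with equality, which is even stronger than required. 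One checks $q_{\sym,+} \in X_{\sym,+}$: by construction $q_{\sym,+,1}(t) \geq 0$ for $t \geq 0$, the symmetry relation holds by construction, and the endpoint limits are preserved since $|\sigma^+_1| = 1 = \sigma^+_1$.

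\medskip
\textbf{Main obstacle.} The only genuinely delicate point is the \emph{continuity at $t=0$} in Step 1, and relatedly making the folding at the right point. If $q_1(0) \neq 0$, then $\Fs(q(0)) \neq q(0)$ and the naive definition of $q_{\sym}$ above is discontinuous at $0$. The standard fix is to not fold exactly at $t=0$ but at a time where the first coordinate vanishes, using that $q$ connects $\sigma^-$ (first coordinate $-1$) to $\sigma^+$ (first coordinate $+1$), so by the intermediate value theorem there exists $t_0 \in \R$ with $q_1(t_0) = 0$; fold at $t_0$ and then translate. However, since $X_{\sym}$ and $X_{\sym,+}$ are defined with the symmetry centered at $t=0$, one must be slightly careful: it is cleanest to first perform the fold in the first coordinate (Step 2 applied to $q$ directly, taking $|q_1|$) only after shifting so that a zero of $q_1$ sits at the origin, or to observe that $\Fs(q(t_0)) = q(t_0)$ automatically once $q_1(t_0)=0$, killing the discontinuity. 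I would therefore reorder: (i) translate $q$ so that $q_1(0) = 0$; (ii) fold in the first coordinate to get $q_1 \geq 0$ everywhere (this already gives $q(0)$ on the hyperplane $\{u_1 = 0\}$ fixed by $\Fs$); (iii) fold at $t=0$ using $\Fs$ to symmetrize in time, choosing the lower-energy half. Each folding does not increase $E$, and the final curve lies in $X_{\sym,+}$. The rest is the routine verification, via \ref{asu-symmetry} and Lemma~\ref{LEMMA-limits-V}, that all membership conditions (finite energy, $H^1_{\loc}$, correct limits, symmetry, sign) hold; I would not spell these out in detail.
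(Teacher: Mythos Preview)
Your proposal is correct and follows essentially the same route as the paper: translate so that $q_1(0)=0$ (using the intermediate value theorem, exactly as you identify in your ``Main obstacle'' paragraph), then perform the time-folding via $\Fs$ keeping the lower-energy half, then replace $q_{\sym,1}$ by $\pm|q_{\sym,1}|$ according to the sign of $t$. The paper does these two foldings in the order (time first, then first coordinate), whereas your final reordering suggests doing the coordinate fold first; both work once the translation has put a zero of $q_1$ at the origin, and the energy estimates are identical. Your observation that the second inequality is in fact an equality (since $|(|f|)'|=|f'|$ a.e.) is correct; the paper states only $\leq$, which is all that is needed.
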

\begin{proof}
Let $q \in X(\sigma^-,\sigma^+)$. By the intermediate value Theorem, there exists $\tau \in \R$ such that
\begin{equation}
q_1(\tau)=0. 
\end{equation}
Due to the translation invariance of the energy, we can assume that $\tau=0$ (otherwise, replace $q$ by $q(\cdot+\tau))$. Without loss of generality, assume that
\begin{equation}\label{Xsym-ineq-q}
\int_0^{+\infty}e(q) \leq \int_{-\infty}^0 e(q).
\end{equation}
We define $q_{\mathrm{sym}}$ as
\begin{equation}
q_{\mathrm{sym}}(t):= \begin{cases}
q(t) &\mbox{ if } t \geq 0,\\
\Fs(q(-t)) &\mbox{ if } t \leq 0,
\end{cases}
\end{equation}
which is well defined and belongs to $X_{\mathrm{sym}}$. Notice that, due to this last fact, assumption \ref{asu-symmetry} and (\ref{Xsym-ineq-q})
\begin{equation}
E(q_{\mathrm{sym}})=2\int_0^{+\infty} e(q) \leq E(q).
\end{equation}
Subsequently, we set
\begin{equation}
q_{\mathrm{sym},+}(t):=\begin{cases}
(\lvert (q_\mathrm{sym})_1(t) \rvert, (q_\mathrm{sym})_2(t),\ldots, (q_\mathrm{sym})_k(t)) &\mbox{ if } t \geq 0,\\
(-\lvert (q_\mathrm{sym})_1(t) \rvert, (q_\mathrm{sym})_2(t),\ldots, (q_\mathrm{sym})_k(t)) &\mbox{ if } t \leq 0.
\end{cases}
\end{equation}
The function $q_{\mathrm{sym},+}$ is also well defined and belongs to $X_{\mathrm{sym},+}$. By assumption \ref{asu-symmetry}, we have for all $t \in \R$ that $V(q_{\mathrm{sym},+}(t))=V(q_{\mathrm{sym}}(t))$ and, by definition, we also have $\lvert q_{\mathrm{sym},+}' \rvert \leq \lvert q_{\mathrm{sym}}' \rvert$, a.e. in $\R$. Therefore,
\begin{equation}
E(q_{\mathrm{sym},+})\leq E(q_{\mathrm{sym}}),
\end{equation}
which establishes the proof.
\qed
\end{proof}

\begin{proposition}\label{PROPOSITION-sym-MP}
Assume that \ref{asu-sigma}, \ref{asu-infinity}, \ref{asu-zeros}, \ref{asu-heteroclinics} and \ref{asu-symmetry} hold. Let $\Fc_\sym$ be as in (\ref{Fc-sym}). Then, we have $\Fc_\sym > \Fm$.
\end{proposition}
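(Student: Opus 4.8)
The plan is to obtain $\Fc_\sym > \Fm$ \emph{for free} from Proposition~\ref{PROPOSITION-mountain-pass}, by observing that the symmetric mountain pass family is a subfamily of the general one. Concretely, the claim I would establish is that $\Gamma_\sym \subseteq \Gamma$. For this, two elementary inclusions suffice. First, since $\scrH_\sym$ is a \emph{closed} subspace of $\scrH$, the inclusion $\scrH_\sym \hookrightarrow \scrH$ is an isometric embedding, so every $\gamma \in C([0,1],\scrH_\sym)$ is in particular an element of $C([0,1],\scrH)$. Second, since $\CF_{\sym,i} = \CF_i \cap X_{\sym,+} \subseteq \CF_i$, we get $\CV_{\sym,i} = \CF_{\sym,i}-\{\psi\} \subseteq \CF_i - \{\psi\} = \CV_i$ for $i \in \{0,1\}$. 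Hence any $\gamma \in \Gamma_\sym$ satisfies $\gamma \in C([0,1],\scrH)$ and $\gamma(i) \in \CV_i$, i.e. $\gamma \in \Gamma$.

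Once $\Gamma_\sym \subseteq \Gamma$ is in hand, the conclusion is immediate: taking the infimum of $\gamma \mapsto \max_{s\in[0,1]}J(\gamma(s))$ over the smaller class $\Gamma_\sym$ can only increase the value, so $\Fc_\sym \geq \Fc$; and Proposition~\ref{PROPOSITION-mountain-pass} gives $\Fc > \Fm$ under \ref{asu-sigma}, \ref{asu-infinity}, \ref{asu-zeros} and \ref{asu-heteroclinics}, all of which are among the present hypotheses. Therefore $\Fc_\sym \geq \Fc > \Fm$. Note that \ref{asu-symmetry} enters only implicitly, to make sense of $X_{\sym,+}$, $\scrH_\sym$, $\CF_{\sym,i}$, $\CV_{\sym,i}$, hence of $\Gamma_\sym$ and $\Fc_\sym$, and (via Lemma~\ref{Lemma-Xsym} together with \ref{asu-heteroclinics}) to guarantee $\CF_{\sym,i}\neq\emptyset$, so that $\Gamma_\sym$ is a nonempty subfamily of $\Gamma$ and $\Fc_\sym < +\infty$.

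Since with this route there is essentially no obstacle beyond the bookkeeping of the above inclusions, let me also indicate the alternative, more ``self-contained'' argument in case one prefers to mirror the proof of Proposition~\ref{PROPOSITION-mountain-pass} directly: given $\gamma \in \Gamma_\sym$, continuity of $\gamma$ and $d(\CF_{\sym,0},\CF_{\sym,1}) \geq d(\CF_0,\CF_1) =: \rho > 0$ produce $s^\star$ with $\mathrm{dist}_\scrH(\gamma(s^\star),\CV_\sym) \geq \rho/4$; one then needs the symmetric analogue of the almost-minimizer compactness statement used in Proposition~\ref{PROPOSITION-mountain-pass}, namely that $v \in \scrH_\sym$ with $J(v)$ close to $\Fm$ forces $\mathrm{dist}_\scrH(v,\CV_\sym)$ small. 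Here the genuinely delicate point — the one I would expect to be the main obstacle in this variant — is that the symmetry constraint $v \in \scrH_\sym$ (equivalently $q_1(0)=0$ for $q := v+\psi$) pins down the translation parameter produced by Theorem~\ref{THEOREM-0}, so that minimizing sequences in $X_\sym$ converge in $\scrH$ \emph{without} translating; combined with the fact that a minimizing heteroclinic lying in $X_\sym$ must lie in $X_{\sym,+}$ (a consequence of Lemma~\ref{Lemma-Xsym} and uniqueness for \eqref{connections-eq}, since $|f|$ has a corner wherever $f$ changes sign transversally), i.e. $\CF \cap X_\sym = \CF_\sym$, this yields the required bound. The first approach sidesteps all of this.
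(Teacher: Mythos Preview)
Your first argument is correct and genuinely different from the paper's. The paper does \emph{not} use the inclusion $\Gamma_\sym \subseteq \Gamma$; instead it reruns the proof of Proposition~\ref{PROPOSITION-mountain-pass} inside the symmetric class, invoking a dedicated equivariant coercivity result (Lemma~\ref{LEMMA-ABG}, taken from Alama--Bronsard--Gui): for every $\eps>0$ there is $c(\eps)>0$ such that $q\in X_\sym$ with $E(q)<\Fm+c(\eps)$ forces $\lVert q-\Fq\rVert_\scrH<\eps$ for some $\Fq\in\CF_\sym$. Combined with Lemma~\ref{Lemma-Xsym}, this lets the paper repeat verbatim the $s^\star$-argument of Proposition~\ref{PROPOSITION-mountain-pass}. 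This is precisely your ``alternative, self-contained'' route, with the ``delicate point'' you flagged (pinning the translation via $q_1(0)=0$) handled by quoting Lemma~\ref{LEMMA-ABG} as a black box rather than deriving it from Theorem~\ref{THEOREM-0}.

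What each approach buys: your inclusion argument is shorter, requires no new analytic input, and yields the extra quantitative information $\Fc_\sym \geq \Fc$, which the paper's proof does not give. The paper's route, on the other hand, is self-contained within the symmetric framework and makes explicit the compactness mechanism (Lemma~\ref{LEMMA-ABG}) that is in any case needed later in the symmetric analysis; it also avoids relying on Proposition~\ref{PROPOSITION-mountain-pass}, whose proof invokes the compactness part of Theorem~\ref{THEOREM-0}. For the bare statement $\Fc_\sym>\Fm$, your first argument is the more economical one.
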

\begin{proof}
We have the following result which shows that coercivity also holds in the equivariant setting (see \cite{alama-bronsard-gui} for a proof):
\begin{lemma}[Alama-Bronsard-Gui \cite{alama-bronsard-gui}, Lemma 2.4]\label{LEMMA-ABG}
For any $\eps>0$, there exists $c(\eps)>0$ such that for any $q \in X_\sym$ such that $E(q)<\Fm+c(\eps)$ we have $\lVert q-\Fq \rVert_{\scrH} < \eps$ for some $\Fq \in \CF_\sym$.
\end{lemma}
Using Lemma \ref{LEMMA-ABG} as well as Lemma \ref{Lemma-Xsym}, it suffices to apply the argument given in the proof of Proposition \ref{PROPOSITION-mountain-pass} to conclude.
\qed 
\end{proof}

Combining Lemma \ref{LEMMA-C1} and Proposition \ref{PROPOSITION-mountain-pass}, the classical mountain pass lemma states that there exists a Palais-Smale sequence at a level $\Fc$, i. e., a sequence $(v_n)_{n \in \N}$ in $\scrH$ such that
\begin{equation}\label{PS-Fc}
\lim_{n \to \infty} J(v_n) = \Fc \mbox{ and } \lim_{n \to \infty}DJ(v_n) = 0 \mbox{ in } \scrH.
\end{equation}
Similarly, by Proposition \ref{PROPOSITION-sym-MP} we find a sequence $(v_n')_{n \in \N}$ in $\scrH_\sym$ such that
\begin{equation}\label{PS-Fc-sym}
\lim_{n \to \infty} J(v_n') = \Fc_\sym \mbox{ and } \lim_{n \to \infty}DJ(v_n') = 0 \mbox{ in } \scrH_\sym.
\end{equation}

\subsection{An abstract deformation lemma}\label{SUBS-deformation}

As explained before, assumptions \ref{asu-K} and \ref{asu-finalK} are used in order to produce Palais Smale sequences at the mountain pass levels such that each element of the sequences goes through a suitable subset of $\R^k$. In order to show the existence of these sequences, we will use a deformation lemma due to Willem. Let us recall some standard terminology. Given a Banach space $X$ we denote by $X'$ its topological dual and given $I \in C^1(X)$, $DI$ is its derivative and for $c \in \R$, $I^c:=\{ x \in X:I(x) \leq c\}$. Given $S \subset X$ and $\rho>0$, we write $S_{\rho}:= \{ x \in X: \mathrm{dist}_X(x,S) \leq \rho\}$.  The result we will invoke is as follows:
\begin{lemma}[Willem, \textit{Lemma 2.3} \cite{willem}]\label{LEMMA_Willem}
Let $X$ be a Banach space, $I \in C^1(X)$, $S \subset X$, $c \in \R$, $\eps, \rho>0$ such that
\begin{equation}\label{deformation_contr}
\forall x \in I^{-1}([c-2\eps,c+2\eps]) \cap S_{2\rho}, \hspace{2mm} \lVert D I(x) \rVert_{X'} \geq 8\eps/\rho
\end{equation}
Then, there exists $\eta \in C([0,1]\times X,X)$ such that
\begin{enumerate}[(i)]
\item $\eta(t,u)=u$ if $t=0$ or if $u \not \in I^{-1}([c-2\eps,c+2\eps]) \cap S_{2\rho}$.
\item $\eta(1,I^{c+\eps} \cap S) \subset I^{c-\eps}$.
\item For all $t \in [0,1]$, $\eta(t,\cdot)$ is an homeomorphism of $X$.
\item For all $x \in X$ and $t \in [0,1]$, $\lVert \eta(t,x)-x \rVert_{X} \leq \delta$.
\item For all $x \in X$, $I(\eta(\cdot,x))$ is non increasing.
\item For all $x \in \varphi^{c} \cap S_{\rho}$ and $t \in (0,1]$, $I(\eta(t,u))<c$.
\end{enumerate}
\end{lemma}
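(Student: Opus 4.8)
This is the classical quantitative deformation lemma, which I would prove by the standard pseudo-gradient flow construction. First I would introduce the two localizing sets $A := I^{-1}([c-2\eps,c+2\eps])\cap S_{2\rho}$ and $B := I^{-1}([c-\eps,c+\eps])\cap S_{\rho}$; both are closed, $B\subset A$, and using only the continuity of $I$ one checks that $B$ lies in the interior of $A$, so that $\overline{X\setminus A}$ and $B$ are disjoint. On $A$ the hypothesis (\ref{deformation_contr}) gives $\lVert DI(x)\rVert_{X'}\ge 8\eps/\rho>0$. I would then invoke the standard fact (see \cite{willem}) that a $C^1$ functional on $X$ admits a pseudo-gradient vector field $v$, i.e. a locally Lipschitz map defined on $\{DI\neq 0\}$ with $\lVert v(x)\rVert_X\le 2\lVert DI(x)\rVert_{X'}$ and $\langle DI(x),v(x)\rangle\ge \lVert DI(x)\rVert_{X'}^2$; by Cauchy--Schwarz the latter forces $\lVert v(x)\rVert_X\ge \lVert DI(x)\rVert_{X'}\ge 8\eps/\rho$ on $A$.

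Next I would build the deforming field. Let $g:X\to[0,1]$ be the locally Lipschitz function
\[
g(x):=\frac{\mathrm{dist}_X(x,X\setminus A)}{\mathrm{dist}_X(x,X\setminus A)+\mathrm{dist}_X(x,B)},
\]
well defined since the denominator is everywhere positive (by the disjointness above), equal to $1$ on $B$ and to $0$ on $X\setminus A$. Set $f(x):=-g(x)\,v(x)/\lVert v(x)\rVert_X^{2}$ for $x\in A$ and $f(x):=0$ otherwise. Then $f$ is locally Lipschitz on $X$ (the two pieces match because $g$ vanishes on $\partial A$) and globally bounded, with $\lVert f(x)\rVert_X\le \lVert v(x)\rVert_X^{-1}\le \rho/(8\eps)$. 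By the Cauchy--Lipschitz theorem together with this global bound, the Cauchy problem $\dot\sigma(t,x)=f(\sigma(t,x))$, $\sigma(0,x)=x$, has a unique solution $\sigma\in C(\R\times X,X)$, each $\sigma(t,\cdot)$ being a homeomorphism with inverse $\sigma(-t,\cdot)$. Finally I would set $\eta(t,x):=\sigma(8\eps\,t,x)$ for $(t,x)\in[0,1]\times X$.

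With this choice the six properties follow. Item (i) holds since $f\equiv 0$ off $A$ (and trivially at $t=0$); (iii) is the homeomorphism property of the flow; (iv) follows from $\lVert\eta(t,x)-x\rVert_X\le 8\eps t\cdot \rho/(8\eps)=\rho t\le\rho$; and (v) from $\tfrac{d}{dt}I(\sigma(t,x))=\langle DI(\sigma),f(\sigma)\rangle$, which vanishes where $\sigma\notin A$ and equals $-g(\sigma)\langle DI(\sigma),v(\sigma)\rangle\lVert v(\sigma)\rVert_X^{-2}\le -\tfrac14 g(\sigma)$ where $\sigma\in A$, using $\lVert v\rVert_X\le 2\lVert DI\rVert_{X'}$ and the pseudo-gradient inequality, so it is $\le 0$ throughout and $\le -\tfrac14$ wherever $g(\sigma)=1$. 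The heart is (ii): given $x\in I^{c+\eps}\cap S$, as long as $I(\sigma(t,x))$ has not dropped to $c-\eps$ one has $I(\sigma(t,x))\in[c-\eps,c+\eps]$ (it is $\le c+\eps$ by (v)) and, since $x\in S$, $\mathrm{dist}_X(\sigma(t,x),S)\le\lVert\sigma(t,x)-x\rVert_X\le \rho t/(8\eps)\le\rho$ for $t\le 8\eps$; hence $\sigma(t,x)\in B$, $g(\sigma(t,x))=1$, and $\tfrac{d}{dt}I(\sigma(t,x))\le-\tfrac14$. Integrating over $[0,8\eps]$ gives $I(\sigma(8\eps,x))\le (c+\eps)-2\eps=c-\eps$, while if the level $c-\eps$ was reached earlier it is retained by monotonicity; either way $I(\eta(1,x))\le c-\eps$, which is (ii). For (vi): if $x\in I^{c}\cap S_\rho$ with $I(x)<c$ then $I(\eta(t,x))\le I(x)<c$ by (v); if $I(x)=c$ then $x\in B$, so $g(x)=1$ and $\tfrac{d}{dt}I(\sigma(t,x))\big|_{t=0}\le-\tfrac14<0$, whence $I(\sigma(t,x))<c$ for small $t>0$ and then for all $t>0$ by monotonicity.

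The only genuinely delicate points, and hence where I expect the main effort to go, are the construction and regularity of $f$: verifying that $g$ is really locally Lipschitz (nonvanishing denominator, i.e. $B\subset\mathrm{int}(A)$) and that $v/\lVert v\rVert_X^{2}$ is locally Lipschitz and bounded on $A$. Both rest precisely on the quantitative lower bound (\ref{deformation_contr}) for $\lVert DI\rVert_{X'}$ on $A$, which is exactly what the hypothesis supplies. Everything else is bookkeeping with the level-and-distance sets $A$ and $B$ and elementary ODE theory.
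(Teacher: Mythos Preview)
The paper does not prove this lemma at all: it is stated with explicit attribution to Willem's book \cite{willem} and then used as a black box in the proofs of Propositions \ref{PROPOSITION-local-PS} and \ref{PROPOSITION-PS2}. Your argument is the standard pseudo-gradient flow construction that one finds in Willem's reference, and it is correct; the normalization $f=-g\,v/\lVert v\rVert^2$, the time rescaling $\eta(t,x)=\sigma(8\eps t,x)$, and the bookkeeping for (ii) and (vi) all match the classical proof. (Note that the $\delta$ in item (iv) of the paper's statement is a typographical slip for $\rho$; your displacement bound $\lVert\eta(t,x)-x\rVert\le\rho$ is the correct conclusion.)
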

Roughly speaking, the key point of Lemma \ref{LEMMA_Willem} is that if (\ref{deformation_contr}) holds then there exists a homotopy equivalence between $I^{c+\eps} \cap S$ and a subset of $I^{c-\eps}$. Equivalently, if we can find $S$ such that there is not any homotopy equivalence between $I^{c+\eps} \cap S$ and any $S' \subset I^{c-\eps}$, then (\ref{deformation_contr}) does not hold. The purpose of properties such as \ref{asu-K} or \ref{asu-finalK} is to provide such a set $S$.
\subsection{The proof of Theorem \ref{THEOREM-general}}\label{SUBS-general}
The idea of the proof of Theorem \ref{THEOREM-general} is to show the existence of a Palais-Smale sequence at the level $\Fc$ ($\Fc$ as in (\ref{Fc})) which produces a solution $\Fu$ such that $\Fu(0) \in K$, which is hence not in $\CF$. It is here when \ref{asu-K} enters. We define the set
\begin{equation}\label{setF}
F:=\{v \in \scrH: (v+\psi)(\R) \cap K \not = \emptyset\}
\end{equation}
with $K$ as in \ref{asu-K}. We show the following:
\begin{proposition}\label{PROPOSITION-local-PS}
There exists sequences, $(u_n)_{n \in \N}$ in $\scrH$ and $(\tau_n)_{n \in \N}$ in $\R$, such that
\begin{enumerate}
\item $J(u_n) \to \Fc$ as $n \to \infty$.
\item $DJ(u_n) \to 0$ in $\scrH$ as $n \to \infty$.
\item For all $n \in \N$, there exists $\tau_n \in \R$ 
\begin{equation}
\lim_{n \to \infty}\mathrm{dist}_\scrH(u_n(\tau_n)+\psi(\tau_n),F) = 0.
\end{equation}
\end{enumerate}
\end{proposition}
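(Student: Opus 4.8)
The plan is to argue by contradiction, turning the set $F$ of (\ref{setF}) into a ``dual set'' for the mountain pass family $\Gamma$ by means of assumption \ref{asu-K}, and then to invoke Willem's quantitative deformation Lemma \ref{LEMMA_Willem}. Concretely, I would suppose that no Palais--Smale sequence for $J$ at the level $\Fc$ approaches $F$ in $\scrH$, i.e.\ that there is no sequence $(v_n)_{n\in\N}$ in $\scrH$ with $J(v_n)\to\Fc$, $DJ(v_n)\to 0$ in $\scrH$ and $\mathrm{dist}_\scrH(v_n,F)\to 0$. A routine argument then produces $\bar\eps>0$ such that $\lVert DJ(v)\rVert_{\scrH'}\ge\bar\eps$ whenever $|J(v)-\Fc|\le\bar\eps$ and $\mathrm{dist}_\scrH(v,F)\le\bar\eps$; shrinking $\bar\eps$, I would also arrange $\Fc+\bar\eps<M$ (with $M$ as in \ref{asu-K}) and $\bar\eps<\nu_0/c_S$, where $c_S$ is the norm of the embedding $\scrH\hookrightarrow L^\infty(\R,\R^k)$ and $\nu_0$ is the constant in the first part of \ref{asu-K}. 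Taking $\rho=\bar\eps/2$ and $\eps\le\bar\eps^2/16$ puts us in the hypotheses of Lemma \ref{LEMMA_Willem} applied with $X=\scrH$, $I=J$, $S=F$, $c=\Fc$, and yields a deformation $\eta\in C([0,1]\times\scrH,\scrH)$.

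Next I would pick, using the definition (\ref{Fc}) of $\Fc$, a path $\gamma_0\in\Gamma$ with $\max_{[0,1]}J\circ\gamma_0<\Fc+\eps\le M$ and set $\gamma_1:=\eta(1,\gamma_0(\cdot))$. Property (v) of $\eta$ gives $\max_{[0,1]}J\circ\gamma_1\le\max_{[0,1]}J\circ\gamma_0\le M$. The endpoints are unchanged: for $i\in\{0,1\}$ one has $\gamma_0(i)\in\CV_i$, hence $\gamma_0(i)+\psi\in\CF$, so by the first part of \ref{asu-K} and the Sobolev embedding $\mathrm{dist}_\scrH(\gamma_0(i),F)\ge\nu_0/c_S>2\rho$, whence $\gamma_0(i)\notin S_{2\rho}$ and property (i) gives $\gamma_1(i)=\gamma_0(i)\in\CV_i$; thus $\gamma_1\in\Gamma$. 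Applying the second part of \ref{asu-K} to $\gamma_1$ produces $s_1\in[0,1]$ with $J(\gamma_1(s_1))\ge\Fc$ and $\gamma_1(s_1)\in F$. Since $\eta(1,\cdot)$ moves points by at most $\rho$ (property (iv)), $\gamma_0(s_1)\in F_\rho=S_\rho$, and by property (v), $\Fc\le J(\gamma_1(s_1))\le J(\gamma_0(s_1))<\Fc+\eps$. The point $\gamma_0(s_1)$ therefore lies in $S_\rho$ at energy in $[\Fc,\Fc+\eps)$, and I would use this, through properties (ii), (v) and (vi) of $\eta$, to derive $J(\gamma_1(s_1))<\Fc$ — contradicting $J(\gamma_1(s_1))\ge\Fc$. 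Once the contradiction is reached, the desired Palais--Smale sequence $(u_n)_{n\in\N}$ exists; the translates $\tau_n$ of item 3 are then obtained by choosing $w_n\in F$ with $\lVert u_n-w_n\rVert_\scrH\to 0$ and $\tau_n\in\R$ with $(w_n+\psi)(\tau_n)\in K$, so that $(u_n+\psi)(\tau_n)$ approaches $K$ by the Sobolev embedding.

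The hard part will be precisely the final implication of the second paragraph, namely that the deformed path retains no point of $F$ at energy $\ge\Fc$. The delicate case is a point $\gamma_0(s_1)$ sitting just \emph{outside} $F$ but inside the active region $S_{2\rho}$ and at energy barely above $\Fc$: such a point is moved by $\eta$ and could a priori be dragged into $F$ before its energy drops below $\Fc$. Excluding this requires the \emph{quantitative} content of Lemma \ref{LEMMA_Willem} — that on $J^{-1}([\Fc-2\eps,\Fc+2\eps])\cap S_{2\rho}$, where $\lVert DJ\rVert_{\scrH'}\ge 8\eps/\rho$, the energy decreases along the flow $\eta(\cdot,u)$ at a controlled rate — rather than only the sublevel inclusions (ii) and (vi), together with a careful calibration of $\eps$ and $\rho$ against $\bar\eps$, $\nu_0$ and $c_S$. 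This quantitative deformation step is what replaces, more directly, the localized mountain pass lemma of Ghoussoub--Preis used in earlier versions of the argument. To summarise the mechanism: the first part of \ref{asu-K} keeps the endpoints of every admissible path fixed under $\eta$, so that $\Gamma$ is preserved, while the second part forces every low-energy path in $\Gamma$ to cross $K$ at a point of energy $\ge\Fc$, which is exactly the duality that makes the min--max level $\Fc$ detectable near $F$.
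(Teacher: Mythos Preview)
Your approach is essentially the paper's: argue by contradiction and apply Willem's deformation lemma with $X=\scrH$, $I=J$, $c=\Fc$ and $S=F$. Two remarks.

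First, your endpoint-fixing argument via part~1 of \ref{asu-K} and the Sobolev embedding is correct but more elaborate than necessary. The paper simply observes that $J(\gamma_0(i))=\Fm<\Fc-2\eps$ for $i\in\{0,1\}$ (taking $\eps$ small enough, since $\Fc>\Fm$ by Proposition~\ref{PROPOSITION-mountain-pass}), so the endpoints lie outside the energy strip $J^{-1}([\Fc-2\eps,\Fc+2\eps])$ and are fixed by property~(i) of Lemma~\ref{LEMMA_Willem}, irrespective of $S$.

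Second, you correctly flag the delicate point in the final contradiction: property~(ii) of Lemma~\ref{LEMMA_Willem} concerns points of $S=F$, whereas after applying \ref{asu-K} to the deformed path $\gamma_1$ and pulling back via~(iv), the point $\gamma_0(s_1)$ is only known to lie in $S_\rho$; likewise (vi) requires $J(\gamma_0(s_1))\le\Fc$, which you do not have. The paper's proof invokes~(ii) directly and is terse at exactly this spot. Your proposed remedy via the quantitative decay of $J$ along the flow would work, but there is a cleaner fix that avoids it entirely: apply Willem's lemma with the \emph{enlarged} set $S=F_{\nu/2}$ and $\rho=\nu/4$ (so that $S_{2\rho}=F_\nu$ still matches the contradiction hypothesis). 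Then, if $\gamma_1(s_1)\in F$ with $J(\gamma_1(s_1))\ge\Fc$, property~(iv) gives $\gamma_0(s_1)\in F_{\nu/4}\subset F_{\nu/2}=S$, and since $J(\gamma_0(s_1))\le\Fc+\eps$ by the choice of $\gamma_0$, property~(ii) yields $J(\gamma_1(s_1))\le\Fc-\eps<\Fc$ --- the desired contradiction, with no extra quantitative input.
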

\begin{proof}
We prove the result by contradiction. If a sequence as in the statement does not exist, then we can find $h \in (0,\frac{1}{2}\min\{M-\Fc,\Fc-\Fm\})$ ($M$ as in \ref{asu-K}. Recall also that $\Fc>\Fm$ due to Proposition \ref{PROPOSITION-mountain-pass}), $\mu>0$ and $\nu>0$ such that
\begin{equation}\label{Deformation_J}
\forall v \in J^{-1}([c-h,c+h]) \cap F_{\nu}, \hspace{2mm} \lVert DJ(v) \rVert_{\scrH} \geq \mu
\end{equation}
with $F$ as in (\ref{setF}) and $F_{\nu}:=\{ v \in \scrH, \mathrm{dist}(v,F) \leq \nu\}$. We have that (\ref{Deformation_J}) is (\ref{deformation_contr}) in Lemma \ref{LEMMA_Willem} with $X=\scrH$, $I=J$, $c=\Fc$, $\eps=h/2$, $\rho=\nu/2$ (we decrease the value of $h$ if necessary so that $\mu \geq 8 h/\nu$). Therefore, there exists $\eta \in C([0,1]\times \scrH,\scrH)$ satisfying the properties of Lemma \ref{LEMMA_Willem}. Let $\overline{\gamma} \in \Gamma$ be such that
\begin{equation}\label{overline_gamma}
\max_{s \in [0,1]}J(\overline{\gamma}(s)) \leq \Fc+\frac{1}{4}h.
\end{equation}
Let us set $\hat{\gamma}: s \in [0,1] \to \eta(1,\overline{\gamma}(s)) \in \scrH$. Since $\eta(1,\cdot)$ is a homeomorphism by (iii) in Lemma \ref{LEMMA_Willem}, we have that $\hat{\gamma} \in C([0,1],\scrH)$, Moreover, by the definition of $h$ we have that $\Fc-h >\Fm$. Therefore, (i) in Lemma \ref{LEMMA_Willem} implies that for $i \in \{0,1\}$ we have $\hat{\gamma}(i)=\overline{\gamma}(i)\in \CV_i$. As a consequence, $\hat{\gamma} \in \Gamma$. Moreover, by (v) in Lemma \ref{LEMMA_Willem} and (\ref{overline_gamma}) we have that
\begin{equation}\label{hat_gamma}
\max_{s \in [0,1]}J(\hat{\gamma}(s)) \leq \Fc+\frac{1}{4}h
\end{equation}
which means by (ii) in Lemma \ref{LEMMA_Willem} that if $\hat{s} \in [0,1]$ is such that $J(\hat{\gamma}(\hat{s})) \geq \Fc$, then $\hat{\gamma}(s) \not \in F$, meaning that $(\psi+\hat{\gamma}(s))(\R) \cap K=\emptyset$. But since $\max_{s \in [0,1]}J(\hat{\gamma}(s)) <M$ by (\ref{hat_gamma}) and the definition of $h$, we get a contradiction with 2. in \ref{asu-K}, which we assume to hold true. Therefore, the proof is completed.
\qed
\end{proof}
Proposition \ref{PROPOSITION-local-PS} along with Lemmas \ref{LEMMA-PSbasic} and \ref{LEMMA-3rdwell} allows to finish the proof of Theorem \ref{THEOREM-general} as follows:

\textit{Proof of Theorem \ref{THEOREM-general} completed}. Assume that the hypothesis made for Theorem \ref{THEOREM-general} hold. Let $(u_n)_{n \in \N}$ and $(\tau_n)_{n \in \N}$ be the sequences given by Proposition \ref{PROPOSITION-local-PS}. By part 2 in Lemma \ref{LEMMA-PSbasic}, the sequence $(\tilde{u}_n):= (t_{\tau_n}(u_n))$ is a Palais-Smale sequence and it also satisfies
\begin{equation}\label{tilde_u-1}
\lim_{n \to \infty} \mathrm{dist}_\scrH(\tilde{u}_n(0)+\psi(0), K)=0.
\end{equation}
Up to an extraction, we have by \ref{asu-otherwells} that for all $n \in \N$ we have $J(\tilde{u}_n)\leq \tilde{C}:= (\Fm^\star-\Fc)/2+\Fc$. Therefore, by applying Lemma \ref{LEMMA-3rdwell}, we obtain $\rho_2:= \rho_2(\tilde{C})$ such that
\begin{equation}\label{tilde_u-2}
\forall n \in \N, \forall \sigma  \in \Sigma\setminus \{ \sigma^-,\sigma^+\},\forall t \in \R, \hspace{2mm} \lvert \tilde{u}_n(t)+\psi(t)-\sigma \rvert \geq \rho_2.
\end{equation}
Using now part 1 of Lemma \ref{LEMMA-PSbasic}, we find $\Fu \in H^1_{\loc}(\R,\R^k) \cap \CC^2(\R,\R^k)$ such that $\Fu$ solves (\ref{connections-eq}), $E(\Fu) \leq \Fc$ and for all $S_K \subset \R$ compact, $\tilde{u}_n+\psi \to \Fu$ in $H^1(S_K,\R^k)$ (in particular, $\tilde{u}_n \to \Fu$ pointwise in $\R$). Using (\ref{tilde_u-1}), the fact that $K$ is closed and pointwise convergence, we find $\Fu(0) \in K$. By assumption \ref{asu-K}, we have that $\Fu$ does not coincide with any minimizing heteroclinic in $\CF$. By (\ref{tilde_u-2}) and pointwise convergence, we have that
\begin{equation}\label{tilde_u-3}
\forall \sigma \in \Sigma \setminus \{ \sigma^-, \sigma^+\}, \forall t \in \R, \hspace{2mm} \lvert \Fu(t)-\sigma \rvert \geq \rho_2,
\end{equation}
meaning in particular that $\Fu$ cannot be a minimizing connecting orbit between $\sigma \in \Sigma \setminus \{ \sigma^-,\sigma^+\}$ and $\sigma' \in \{ \sigma^-,\sigma^+\}$. Assume now that $\Fu \in X(\sigma^-,\sigma^+)$. Due to the previous discussion, we must have $E(\Fu)>\Fm$. If $\Fu$ does not belong to $X(\sigma^-,\sigma^+)$, by Lemma \ref{LEMMA-limits-V} we have
\begin{equation}
\exists \sigma \in \{ \sigma^-,\sigma^+\}: \hspace{2mm} \lim_{t \to \pm \infty} \Fu(t) =\sigma
\end{equation}
and $\Fu(0) \not \in \{\sigma^-,\sigma^+\}$ because $\Fu(0) \in K$ and $K \cap \{\sigma^-,\sigma^+\}= \emptyset$ due to the first part of \ref{asu-K}. We also have that $\Fu(0) \not \in \Sigma \setminus \{\sigma^-,\sigma^+\}$ due to (\ref{tilde_u-3}). Therefore, $\Fu(0) \in \Sigma$. Hence, $\Fu$ is not constant.
\qed
\subsection{The proofs of Theorems \ref{THEOREM-symmetry} and \ref{THEOREM-symmetry-final}}\label{SUBS-SYM}
The first step of the proof of both Theorems consists on showing that there exists a Palais-Smale sequence $(u_n)_{n \in \N}$ at the level $\Fc_\sym$ such that $(\psi+u_n)_{n \in \N}$ approaches $X_{\sym,+}$. The existence of such sequence follows from the fact that we can map $X_\sym$ into $X_{\sym,+}$ continuously and leaving $X_{\sym,+}$ invariant and that such mapping does not increase the energy due to the symmetry assumption \ref{asu-symmetry}. The idea then is to show that a nontrivial solution is produced even if we have dichotomy of the Palais-Smale sequence. This proves Theorem \ref{THEOREM-symmetry}. More precisely, if a Palais-Smale sequence in $\scrH_{\sym,+}$ is not compact, then we are in the situation 1. of Theorem \ref{THEOREM-symmetry} and we find a pair of nontrivial homoclinic solutions. Of course, if such a Palais-Smale sequence is compact, we recover a solution in $X_{\sym,+}$ with energy $\Fc_\sym$, thus also nontrivial. Subsequently, for proving Theorem \ref{THEOREM-symmetry-final} under the additional assumption \ref{asu-finalK}, the argument is supplemented with a deformation argument analogous to that in the proof of Theorem \ref{THEOREM-general}. 

We begin by showing the following:
\begin{lemma}\label{LEMMA-FA}
Assume that \ref{asu-sigma}, \ref{asu-infinity}, \ref{asu-zeros} and \ref{asu-symmetry} hold. Let $d$ be as in (\ref{distance}) and $F_+:(X_\sym,d) \to (X_{\sym,+},d)$ be such that
\begin{equation}
\forall q \in X_\sym, \hspace{2mm} F_+(q)(t):= \begin{cases}
(\lvert q_1(t) \rvert, q_2(t),\ldots, q_k(t)) &\mbox{ if } t \geq 0,\\
(-\lvert q_1(t) \rvert, q_2(t),\ldots, q_k(t)) &\mbox{ if } t \leq 0.
\end{cases}
\end{equation}
Then for all $q \in X_\sym$ we have $E(F_+(q))\leq E(q)$, $F_+|_{X_{\sym,+}}=\mathrm{Id}|_{X_{\sym,+}}$ and $F_+$ is continuous.
\end{lemma}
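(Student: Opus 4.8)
The plan is to verify the three claimed properties of $F_+$ in turn. The identity property $F_+|_{X_{\sym,+}} = \mathrm{Id}|_{X_{\sym,+}}$ is immediate: if $q \in X_{\sym,+}$ then by definition $q_1(t) \geq 0$ for $t \geq 0$, so $|q_1(t)| = q_1(t)$ there, and since $q \in X_\sym$ we have $\Fs(q(t)) = q(-t)$, which forces $q_1(-t) = -q_1(t) \leq 0$ for $t \geq 0$, i.e. $q_1(s) \leq 0$ for $s \leq 0$, so $-|q_1(s)| = q_1(s)$; hence $F_+(q) = q$.

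For the energy inequality, I would argue pointwise almost everywhere. By assumption \ref{asu-symmetry}, $V$ is invariant under $\Fs$, hence also under the map $\bu \mapsto (|u_1|, u_2, \ldots, u_k)$ on the half-line $t \geq 0$ (and the analogous map on $t \leq 0$), since that map agrees with either the identity or $\Fs$ depending on the sign of $u_1$. Therefore $V(F_+(q)(t)) = V(q(t))$ for a.e. $t$. For the kinetic term, the key observation is that $t \mapsto |q_1(t)|$ is in $H^1_{\loc}$ whenever $q_1$ is, with $\big| \frac{d}{dt}|q_1(t)| \big| = |q_1'(t)|$ a.e. (this is the standard fact that $|\cdot|$ preserves $H^1$ with $|f|' = \mathrm{sgn}(f) f'$ a.e.). The other components are unchanged. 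Hence $|F_+(q)'(t)| = |q'(t)|$ a.e., and so in fact $E(F_+(q)) = E(q)$ — which of course gives the inequality. One should also check that $F_+(q)$ genuinely lies in $X_{\sym,+}$: it is continuous across $t = 0$ because $q$ is (both one-sided limits equal $(|q_1(0)|, q_2(0), \ldots, q_k(0))$, and $q_1(0) = 0$ since $q \in X_\sym$); it satisfies the equivariance $\Fs(F_+(q)(t)) = F_+(q)(-t)$ by construction; its first component is $\geq 0$ on $t \geq 0$; and it has finite energy and the correct limits at $\pm\infty$ since it agrees with $q$ in absolute value componentwise and $q \in X_\sym \subset X(\sigma^-,\sigma^+)$ with $\sigma^\pm$ having first coordinate $\pm 1 \neq 0$, so near $\pm\infty$ the sign of $q_1$ is fixed and $F_+(q)$ coincides with $q$ up to the reflection already built into $X_\sym$.

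The remaining point, continuity of $F_+$ with respect to the metric $d$ (i.e. the $\scrH$-norm on differences, which makes sense by Lemma \ref{LEMMA-difference}), is the part I expect to require the most care. Writing $F_+(q) - F_+(\tilde q)$ and using that the components $2, \ldots, k$ transform linearly, the nontrivial estimate is for the first component: one needs $\big\| |q_1| - |\tilde q_1| \big\|_{H^1} \lesssim \| q_1 - \tilde q_1 \|_{H^1}$ (on each half-line, then patched). The $L^2$ bound on the functions themselves is trivial from $\big| |a| - |b| \big| \leq |a-b|$. For the derivatives one uses $\frac{d}{dt}|q_1| = \mathrm{sgn}(q_1)\, q_1'$ a.e., so the difference of derivatives is $\mathrm{sgn}(q_1) q_1' - \mathrm{sgn}(\tilde q_1)\tilde q_1' = \mathrm{sgn}(q_1)(q_1' - \tilde q_1') + (\mathrm{sgn}(q_1) - \mathrm{sgn}(\tilde q_1))\tilde q_1'$; the first term is controlled in $L^2$ by $\|q_1' - \tilde q_1'\|_{L^2}$, and the second is supported on the set where $q_1$ and $\tilde q_1$ have opposite signs, on which $|\tilde q_1| \leq |q_1 - \tilde q_1|$ pointwise, so it is bounded by $2\| (q_1 - \tilde q_1)\mathbf{1}_{\{q_1\tilde q_1 < 0\}}\|_{L^2} \leq 2\|q_1 - \tilde q_1\|_{L^2}$. (On the null set $\{q_1 = 0\}$ of positive measure one uses $q_1' = 0$ a.e. there, and similarly for $\tilde q_1$, so those contributions vanish.) Combining, $F_+$ is Lipschitz, hence continuous. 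I would then remark that the analogous statement on $t \leq 0$ is identical and the two glue along $t = 0$ without creating any jump in the $H^1$ difference, completing the proof.
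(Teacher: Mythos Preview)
Your treatment of the identity property and of the energy inequality is fine (and in fact you observe $E(F_+(q))=E(q)$, which is consistent with the paper's argument via Lemma~\ref{Lemma-Xsym}). The problem lies in the continuity argument.

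In your decomposition
\[
\bigl(|q_1|-|\tilde q_1|\bigr)'=\mathrm{sgn}(q_1)\,(q_1'-\tilde q_1')+\bigl(\mathrm{sgn}(q_1)-\mathrm{sgn}(\tilde q_1)\bigr)\,\tilde q_1',
\]
the second term has $L^2$ norm $2\lVert \tilde q_1'\,\mathbf{1}_{\{q_1\tilde q_1<0\}}\rVert_{L^2}$, which involves $\tilde q_1'$, not $\tilde q_1$. The pointwise bound $|\tilde q_1|\le|q_1-\tilde q_1|$ on $\{q_1\tilde q_1<0\}$ does \emph{not} yield the estimate $2\lVert(q_1-\tilde q_1)\mathbf{1}_{\{q_1\tilde q_1<0\}}\rVert_{L^2}$ you wrote. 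This slip is not merely cosmetic: the Lipschitz conclusion is actually false. For instance, working on $[0,2]$ with $g(t)=t(t-1)$ (which vanishes at $0$, as the oddness constraint in $X_\sym$ forces) and $f_\eps=g+\eps\phi$ with $\phi$ smooth, $\phi(0)=0$, $\phi(1)=1$, one computes $\lVert f_\eps-g\rVert_{H^1}\sim\eps$ while $\lVert |f_\eps|-|g|\rVert_{H^1}\gtrsim\sqrt{\eps}$, since the sign--change set near $t=1$ has length $\sim\eps$ and $g'(1)=1$. Hence $f\mapsto|f|$ is not Lipschitz on the relevant space, and your argument cannot be completed as written. (It can be repaired to give \emph{continuity}: for a sequence $q_n\to \tilde q$ in $H^1$, the bad set is eventually contained in $\{|\tilde q_1|\le \lVert q_{n,1}-\tilde q_1\rVert_{L^\infty}\}$, and dominated convergence together with $\tilde q_1'=0$ a.e.\ on $\{\tilde q_1=0\}$ shows $\lVert \tilde q_1'\,\mathbf{1}_{\{q_{n,1}\tilde q_1<0\}}\rVert_{L^2}\to 0$; but this is a different argument from the one you sketched.)

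The paper avoids this difficulty altogether. It fixes $\tilde q$ and uses that $\tilde q_1\to\pm 1$ at $\pm\infty$, together with uniform convergence $q_n\to \tilde q$, to find a bounded interval $I$ outside which $q_{n,1}$ and $\tilde q_1$ keep a definite sign; there $F_+(q_n)=q_n$ and $F_+(\tilde q)=\tilde q$, so convergence is immediate. On the compact interval $I$ it then invokes the Marcus--Mizel theorem, which gives continuity (not Lipschitz continuity) of the superposition operator $f\mapsto|f|$ on $H^1(I)$. This localization plus black-box step is what your direct global estimate was trying to shortcut.
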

\begin{proof}
Let $q \in X_{\sym}$, notice that repeating the argument in the proof of Lemma \ref{Lemma-Xsym} shows that $E(F_+(q))\leq E(q)$. Notice also that in case $q \in X_{\sym,+}$ then $F_+(q)=q$. Therefore, it only remains to show that $F_+$ is continuous. Let $(q_n)_{n \in \N}$ be a sequence in $X_\sym$ and $q \in X_\sym$ such that
\begin{equation}\label{FA-hyp}
\lim_{n \to \infty} \lVert q_n - q \rVert_\scrH = 0.
\end{equation}
For each $n \in \N$ set $q_n^+:= F_+(q_n) \in X_{\sym,+}$ and $q^+:= F_+(q) \in X_{\sym,+}$. We need to show that
\begin{equation}\label{FA-concl}
\lim_{n \to \infty} \lVert q_n^+-q^+\rVert_{\scrH}=0.
\end{equation}
Let $\kappa \leq \frac{1}{4}$ be arbitrary and take $t_q^+ \in \R$ such that
\begin{equation}\label{FA-id1}
\forall t \geq t_q^+, \hspace{2mm} \lvert q(t)-\sigma^+ \rvert \leq \kappa
\end{equation}
and $t_q^-<t_q^+$ such that
\begin{equation}\label{FA-id2}
\forall t \leq t_q^-, \hspace{2mm} \lvert q(t)-\sigma^- \rvert \leq \kappa.
\end{equation}
We set $I:= [t_v^-,t_v^+]$ By (\ref{FA-hyp}), we have that $q_n \to q$ uniformly, so in particular there exists $n_0 \in \N$ such that for all $n \geq n_0$ it holds $\lVert q_n-q \rVert_{L^{\infty}(\R,\R^k)} \leq \kappa$. This fact along with (\ref{FA-id1}), the definition of $\kappa$ and (\ref{FA-id2}) allow us to say that
\begin{equation}
\forall n \geq n_0, \forall t \in \R \setminus I, \hspace{2mm} q_n(t)=q_n^+(t) \mbox{ and } q(t)=q^+(t),
\end{equation}
which means that $(q_n^+ - q^+)_{n \in \N}$ converges to 0 in $H^1(\R\setminus I,\R^k)$ by (\ref{FA-hyp}). Hence, in order to establish (\ref{FA-concl}) we only need to show that $(q_n^+-q^+)_{n \in \N}$ converges to 0 in $H^1(I,\R^k)$. Notice that in fact all functions belong now to $H^1(I,\R^k)$ because $I$ is bounded. Let $f_+:H^1(I) \to H^1(I)$ the application such that
\begin{equation}
\forall v \in H^1(I), \forall t \in I, \hspace{2mm} f_+(v)(t):=\lvert v(t) \rvert.
\end{equation}
We have that the absolute value function is Lipschitz as a function from $\R$ to $\R$ and, moreover, the interval $I$ is bounded. Therefore, $f_+$ is continuous due to \textit{Theorem 1} in Marcus and Mizel \cite{marcus-mizel}. As a consequence, we have
\begin{equation}
\lim_{n \to \infty} \lVert q_{n,1}^+-q_{1}^+ \rVert_{H^1(I \cap [0,+\infty))}= \lim_{n \to \infty}\lVert f_+(q_{n,1})-f_+(q_1) \rVert_{H^1(I \cap [0,+\infty))}=0
\end{equation}
and
\begin{equation}
\lim_{n \to \infty} \lVert q_{n,1}^+-q_{1}^+ \rVert_{H^1(I \cap (-\infty,0])}= \lim_{n \to \infty}\lVert -f_+(q_{n,1})+f_+(q_1) \rVert_{H^1(I \cap (-\infty,0])}=0,
\end{equation}
that is
\begin{equation}
\lim_{n \to \infty} \lVert q_{n,1}^+-q_{1}^+ \rVert_{H^1(I)} = 0.
\end{equation}
Since all the other components were not modified, (\ref{FA-concl}) has been proven and the proof is concluded.
\qed
\end{proof}

Lemma \ref{LEMMA-FA} implies the following:
\begin{lemma}\label{LEMMA-hsym}
Assume that \ref{asu-sigma}, \ref{asu-infinity}, \ref{asu-zeros}, \ref{asu-heteroclinics} and \ref{asu-symmetry} hold. Let $h_\sym: \scrH_\sym \to \scrH_\sym$ be defined as
\begin{equation}
h_\sym: v \in \scrH_\sym \to F_+(v+\psi)-\psi \in \scrH_\sym.
\end{equation} 
Then for all $v \in \scrH_\sym$ we have $h_\sym(v)+\psi \in X_{\sym,+}$, $J(h_\sym(v))\leq J(v)$ and for all $\gamma \in \Gamma_{\sym}$ it holds that the composed path $h_\sym \circ \gamma$ belongs to $\Gamma_{\sym}$.
\end{lemma}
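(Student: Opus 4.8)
The plan is to obtain all three assertions as direct corollaries of Lemma~\ref{LEMMA-FA} together with the identity $\{\psi\}+\scrH_\sym=X_\sym$ recorded above.

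First I would verify that $h_\sym$ is well defined and lands where claimed. If $v\in\scrH_\sym$ then $v+\psi\in X_\sym$, so by Lemma~\ref{LEMMA-FA} we have $F_+(v+\psi)\in X_{\sym,+}\subset X_\sym$; subtracting $\psi$ gives $h_\sym(v)=F_+(v+\psi)-\psi\in X_\sym-\{\psi\}=\scrH_\sym$. This shows $h_\sym$ maps $\scrH_\sym$ into itself, and moreover $h_\sym(v)+\psi=F_+(v+\psi)\in X_{\sym,+}$, which is the first claim. The energy inequality is then immediate from the corresponding inequality in Lemma~\ref{LEMMA-FA}: $J(h_\sym(v))=E(h_\sym(v)+\psi)=E(F_+(v+\psi))\le E(v+\psi)=J(v)$.

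For the statement on paths, the key point is that the affine shift $v\mapsto v+\psi$ is an isometry from $(\scrH_\sym,\lVert\cdot\rVert_\scrH)$ onto $(X_\sym,d)$ with inverse $q\mapsto q-\psi$, since $d(q,\tilde q)=\lVert q-\tilde q\rVert_\scrH$ by definition; it also restricts to an isometry onto $(X_{\sym,+},d)$ on the relevant subsets. Since $F_+:(X_\sym,d)\to(X_{\sym,+},d)$ is continuous by Lemma~\ref{LEMMA-FA}, the map $h_\sym$ is a composition of continuous maps, hence continuous, and therefore $h_\sym\circ\gamma\in C([0,1],\scrH_\sym)$ whenever $\gamma\in C([0,1],\scrH_\sym)$. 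Finally, if $\gamma\in\Gamma_\sym$ then for $i\in\{0,1\}$ we have $\gamma(i)\in\CV_{\sym,i}=\CF_{\sym,i}-\{\psi\}$, hence $\gamma(i)+\psi\in\CF_{\sym,i}\subset X_{\sym,+}$; since $F_+$ restricts to the identity on $X_{\sym,+}$ by Lemma~\ref{LEMMA-FA}, we get $h_\sym(\gamma(i))=F_+(\gamma(i)+\psi)-\psi=\gamma(i)\in\CV_{\sym,i}$, so the endpoint conditions are preserved and $h_\sym\circ\gamma\in\Gamma_\sym$.

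Since each step reduces to Lemma~\ref{LEMMA-FA} and routine bookkeeping with the affine structure of $X_\sym$, I do not expect a genuine obstacle here; the only point demanding a little care is to keep track of the fact that the metric $d$ on $X_\sym$ used in Lemma~\ref{LEMMA-FA} corresponds, under the shift by $\psi$, exactly to the Hilbert norm on $\scrH_\sym$, so that continuity of $F_+$ really does transfer to continuity of $h_\sym$.
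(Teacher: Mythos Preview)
Your proof is correct and follows essentially the same approach as the paper: both reduce all claims to Lemma~\ref{LEMMA-FA}, using that $F_+$ lands in $X_{\sym,+}$, does not increase energy, and fixes $X_{\sym,+}$ pointwise to handle the endpoints. The only difference is that you spell out explicitly why $h_\sym$ is continuous (via the isometry $v\mapsto v+\psi$ between $(\scrH_\sym,\lVert\cdot\rVert_\scrH)$ and $(X_\sym,d)$), whereas the paper simply declares this ``straightforward''; your version is a bit more complete in this respect.
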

\begin{proof}
Let $v \in \scrH$. By Lemma \ref{LEMMA-FA} we have that $h_\sym(v)+\psi=F_+(v+\psi) \in X_{\sym,+}$ and $J(h_\sym(v))=E(F_+(v+\psi)) \leq E(v+\psi) = J(v)$. It is straightforward to show that $h_\sym$ is continuous. Notice that if $\Fv \in \CV_\sym$ then $\CV_\sym+\{\psi\} = \CF_\sym \subset X_{\sym,+}$ by definition. Therefore, using again Lemma \ref{LEMMA-FA} we have $h_\sym(\Fv)=F_+(\Fv+\psi)-\psi=\Fv+\psi-\psi=\Fv = \mathrm{Id}_{\scrH_\sym}(\Fv)$.
\qed
\end{proof}

\subsubsection{The proof of Theorem \ref{THEOREM-symmetry}}
We have the following result:
\begin{proposition}\label{PROPOSITION-sym-+}
Assume that \ref{asu-sigma}, \ref{asu-infinity}, \ref{asu-zeros}, \ref{asu-heteroclinics} and \ref{asu-symmetry} hold. Then, there exists a sequence $(u_n)_{n \in \N}$ in $\scrH_{\sym}$ such that 
\begin{equation}\label{PS-sym}
\lim_{n \to +\infty} J(u_n) \to \Fc_\sym \mbox{ and } DJ(u_n) \to 0 \mbox{ in } \scrH_\sym.
\end{equation}
and, moreover,
\begin{equation}\label{PS-sym2}
\lim_{n \to +\infty}d(u_n+\psi,X_{\sym,+}) =0,
\end{equation}
where $d$ is as in (\ref{distance}).
\end{proposition}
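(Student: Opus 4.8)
The idea is to interpret $\Fc_\sym$ as a min-max value attained along paths whose image already lies in $X_{\sym,+}$, and then to run the deformation lemma of Willem on the Hilbert space $\scrH_\sym$ with the constraint set taken to be the affine translate of $X_{\sym,+}$.

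First I would set $\scrH_{\sym,+}:=X_{\sym,+}-\{\psi\}$, which by Lemma \ref{LEMMA-difference} is a subset of $\scrH_\sym$ (it is in fact closed, since $\scrH$-convergence implies uniform convergence and the sign condition $(v+\psi)_1(t)\ge0$ for $t\ge0$ passes to pointwise limits, although closedness is not needed for Willem's lemma). For $u\in\scrH_\sym$ one has $d(u+\psi,X_{\sym,+})=\mathrm{dist}_\scrH(u,\scrH_{\sym,+})$, so (\ref{PS-sym2}) amounts to asking that the Palais--Smale sequence stay asymptotically close to $\scrH_{\sym,+}$. Next I would introduce $\Gamma_{\sym,+}:=\{\gamma\in\Gamma_\sym:\psi+\gamma([0,1])\subset X_{\sym,+}\}$ and the associated value $\Fc_{\sym,+}:=\inf_{\gamma\in\Gamma_{\sym,+}}\max_{s\in[0,1]}J(\gamma(s))$. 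Trivially $\Fc_{\sym,+}\ge\Fc_\sym$ since $\Gamma_{\sym,+}\subset\Gamma_\sym$. For the reverse inequality I would use Lemma \ref{LEMMA-hsym}: given $\gamma\in\Gamma_\sym$, the path $h_\sym\circ\gamma$ belongs to $\Gamma_\sym$, satisfies $(h_\sym\circ\gamma)(s)+\psi=F_+(\gamma(s)+\psi)\in X_{\sym,+}$ for all $s$, hence lies in $\Gamma_{\sym,+}$, and satisfies $J(h_\sym(\gamma(s)))\le J(\gamma(s))$; taking the maximum in $s$ and the infimum in $\gamma$ gives $\Fc_{\sym,+}\le\Fc_\sym$. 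Therefore $\Fc_{\sym,+}=\Fc_\sym$.

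Then I would argue by contradiction. If no sequence satisfying (\ref{PS-sym})--(\ref{PS-sym2}) existed, a routine argument would produce constants $\eps_0,\rho_0,\mu_0>0$ such that $\lVert DJ(v)\rVert_{\scrH_\sym'}\ge\mu_0$ for every $v\in\scrH_\sym$ with $\lvert J(v)-\Fc_\sym\rvert\le\eps_0$ and $\mathrm{dist}_\scrH(v,\scrH_{\sym,+})\le\rho_0$. Since $J|_{\scrH_\sym}$ is $C^1$ (Lemma \ref{LEMMA-C1}) and $\Fc_\sym>\Fm$ (Proposition \ref{PROPOSITION-sym-MP}), I would fix $\rho:=\rho_0/2$ and $\eps:=\min\{\eps_0/2,\ \mu_0\rho/8,\ (\Fc_\sym-\Fm)/3\}$; then hypothesis (\ref{deformation_contr}) of Lemma \ref{LEMMA_Willem} holds with $X=\scrH_\sym$, $I=J|_{\scrH_\sym}$, $c=\Fc_\sym$, $S=\scrH_{\sym,+}$ and these $\eps,\rho$, yielding a deformation $\eta$. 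Using $\Fc_{\sym,+}=\Fc_\sym$ I would pick $\gamma\in\Gamma_{\sym,+}$ with $\max_{s}J(\gamma(s))\le\Fc_\sym+\eps$; since $\gamma([0,1])\subset\scrH_{\sym,+}=S$, property (ii) of Lemma \ref{LEMMA_Willem} gives $J(\eta(1,\gamma(s)))\le\Fc_\sym-\eps$ for all $s$. The endpoints satisfy $J(\gamma(i))=\Fm<\Fc_\sym-2\eps$ for $i\in\{0,1\}$, so by (i) they are fixed, hence $\eta(1,\gamma(i))=\gamma(i)\in\CV_{\sym,i}$, and $s\mapsto\eta(1,\gamma(s))$ is continuous by (iii). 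Thus $\tilde\gamma:=\eta(1,\gamma(\cdot))\in\Gamma_\sym$ with $\max_sJ(\tilde\gamma(s))\le\Fc_\sym-\eps<\Fc_\sym$, contradicting the definition of $\Fc_\sym$; this contradiction proves the proposition.

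The point demanding care --- rather than a genuine obstacle --- is the compatibility of the constraint set with the min-max value: one must know that nearly optimal paths can be taken inside $X_{\sym,+}$, and this is exactly where the energy non-increasing retraction $h_\sym$ of Lemma \ref{LEMMA-hsym} (built from Lemma \ref{LEMMA-FA} and the reflection symmetry \ref{asu-symmetry}) is essential; without it the deformed path $\tilde\gamma$ would not be comparable with $\Fc_\sym$. A minor technical check is that the endpoints of these paths sit at the energy level $\Fm$, strictly below the deformation band $[\Fc_\sym-2\eps,\Fc_\sym+2\eps]$, so that the deformation leaves them untouched and membership in $\Gamma_\sym$ is preserved; this is guaranteed by the strict inequality $\Fc_\sym>\Fm$ of Proposition \ref{PROPOSITION-sym-MP}.
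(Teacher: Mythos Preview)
Your argument is correct and follows essentially the same route as the paper: use the mountain pass geometry of Proposition \ref{PROPOSITION-sym-MP}, use Lemma \ref{LEMMA-hsym} to ensure that nearly optimal paths can be taken inside $X_{\sym,+}$, and then extract a Palais--Smale sequence close to these paths by a deformation argument. The only difference is cosmetic: the paper delegates the last step to a black-box variant of the mountain pass lemma (\textit{Corollary 4.3} in Mawhin--Willem \cite{mawhin-willem}, producing a Palais--Smale sequence close to any min-maxing sequence of paths), whereas you unpack that step explicitly via Lemma \ref{LEMMA_Willem} with $S=\scrH_{\sym,+}$, which the paper already states. Your reformulation $\Fc_{\sym,+}=\Fc_\sym$ is a convenient way to encode precisely what the paper means by ``min-maxing sequence of paths'' in $X_{\sym,+}$.
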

The proof of Proposition \ref{PROPOSITION-sym-+} is a direct consequence of Proposition \ref{PROPOSITION-sym-MP} and  Lemma \ref{LEMMA-hsym} along with a usual variant of the mountain pass lemma (see for instance \textit{Corollary 4.3} in Mawhin and Willem \cite{mawhin-willem}) which allows to find a Palais-Smale sequence associated to any given min-maxing sequence of paths. We can now tackle the final part of the proof of Theorem \ref{THEOREM-symmetry}.

\textit{Proof of Theorem \ref{THEOREM-symmetry} completed}. Assume that the hypothesis of Theorem \ref{THEOREM-symmetry} hold. Let $(u_n)_{n \in \N}$ be the Palais-Smale sequence provided by Proposition \ref{PROPOSITION-sym-+}. By assumption \ref{asu-otherwells-sym}, up to an extraction we have
\begin{equation}
\sup_{n \in \N}J(u_n) \leq C < \Fm^\star
\end{equation}
 for an arbitrary $C \in (\Fc_\sym,\Fm^\star)$. We can then use Lemma \ref{LEMMA-3rdwell} to find $\rho_2>0$ such that
\begin{equation}\label{SYM1-proof-3rdwell}
\forall n \in \N, \forall \sigma \in \Sigma \setminus \{ \sigma^-,\sigma^+\}, \forall t \in \R, \hspace{2mm} \lvert u_n(t)+\psi(t)-\sigma \rvert \geq \rho_2.
\end{equation}
We divide the proof according to the two possible scenarios (dichotomy or compactness):

\textbf{Case 1}. Dichotomy. Assume that there exist $c_0>0$, $c_1>0$  and a sequence $t_n \to \infty$ such that, up to an extraction
\begin{equation}\label{SYM1-Case1}
\forall n \in \N, \hspace{2mm} \int_{t_n-c_1}^{t_n+c_1} e(u_n+\psi) \geq c_0.
\end{equation}
Since $(u_n+\psi)_{n \in \N}$ approaches $X_{\sym,+}$ due to (\ref{PS-sym2}), up to an extraction we can suppose
\begin{equation}\label{SYM1-1-1}
\forall n \in \N, \forall t \geq 0, \hspace{2mm} \lvert u_n(t)+\psi(t)-\sigma^- \rvert \geq \rho_2.
\end{equation}
For each $n \in \N$, we can define $\tilde{q}_n:= u_n(\cdot+t_n)+\psi(\cdot+t_n) \in X(\sigma^-,\sigma^+)$ and $\tilde{u}_n:= \tilde{q}_n-\psi$. We can regard $(u_n)_{n \in \N}$ as a Palais-Smale sequence in $\scrH$ because $\scrH_\sym$ is a closed subspace of $\scrH$. Part 2 in Lemma \ref{LEMMA-PSbasic} implies then that $(\tilde{u}_n)_{n \in \N}$ is a Palais-Smale sequence in $\scrH$. By using now part 1 of Lemma \ref{LEMMA-PSbasic}, we find $\Fu_+ \in H^1_{\loc}(\R,\R^k)$ such that for all $S_K \subset \R$ compact, $\tilde{q}_n \to \Fu_+$ in $H^1(S_K,\R^k)$. Moreover, $\Fu_+ \in \CC^2(\R,\R^k)$ solves (\ref{connections-eq}) and $E(\Fu_+) \leq \Fc_\sym$. By (\ref{SYM1-Case1}) and the convergence, we have
\begin{equation}
\int_{-c_1}^{c_1}  e(\Fu_+) = \lim_{n \to +\infty} \int_{-c_1}^{c_1} e(\tilde{q}_n) \geq c_0,
\end{equation}
meaning that $E(\Fu_+) \geq c_0$, so in particular $\Fu_+$ is not constant. We now show that $\Fu_+$ converges to $\sigma^+$ at infinity. Rewriting (\ref{SYM1-1-1}) for $(\tilde{q}_n)_{n \in \N}$, we have
\begin{equation}\label{SYM-1-2}
\forall n \in \N, \forall t \geq -t_n, \hspace{2mm} \lvert \tilde{q}_n(t)-\sigma^- \rvert \geq \rho_2,
\end{equation}
which combined with (\ref{SYM1-proof-3rdwell}), Lemma \ref{LEMMA-limits-V} and pointwise convergence $\tilde{q}_n \to \Fu_+$ gives that
$\lim_{t\to \pm \infty} \Fu_+(t)=\sigma^+$ as we wanted. Finally, notice that by symmetry we have that the function
\begin{equation}
\Fu_-: t\in \R \to \Fs(\Fu_+(t)),
\end{equation}
is a non constant solution of (\ref{connections-eq}) such that $\lim_{t \to \pm \infty} \Fu_-(t)=\sigma^-$. 

\textbf{Case 2}. Compactness. The hypothesis made for Case 1 is not satisfied. Then, for all $c_2>0$ there exists $t(c_2)>0$ such that
\begin{equation}
\forall n \in \N, \hspace{2mm} \int_{t(c_2)}^{+\infty} e(u_n+\psi) \leq c_2
\end{equation}
and, by symmetry
\begin{equation}
\forall n \in \N, \hspace{2mm} \int_{-\infty}^{-t(c_2)}e(u_n+\psi) \leq c_2.
\end{equation}
Equivalently, up to taking a diagonal extraction, for each $m \geq 1$ we can find $t(m) \geq 0$ such that
\begin{equation}\label{SYM1-2-1}
\forall n \in \N, \hspace{2mm} \int_{-t(m)}^{t(m)} e(u_n+\psi) \geq \Fc_\sym-\frac{1}{m}.
\end{equation}
Using again Lemma \ref{LEMMA-PSbasic}, we find $\Fu \in H^1_{\loc}(\R,\R^k)$ a solution to (\ref{connections-eq}) such that $u_n+\psi \to \Fu$ strongly in $H^1(S_K,\R^k)$ for each compact interval $I$. Moreover, by (\ref{SYM1-proof-3rdwell}) and (\ref{PS-sym2}) we have $\Fu \in X_{\sym,+}$. Finally, using (\ref{SYM1-2-1}) we get $E(\Fu)=\Fc_\sym$, which concludes the proof.
\qed
\subsubsection{The proof of Theorem \ref{THEOREM-symmetry-final}}
We will use \ref{asu-finalK} and Lemma \ref{LEMMA_Willem}. Define
\begin{equation}
A_\sym:= \{ q \in X_\sym: q(0) \in K_\sym \mbox{ and } E(q) \geq \Fc_\sym\}
\end{equation}
and
\begin{equation}\label{Fsym}
F_\sym:= A_\sym-\{\psi\} \in \scrH_\sym.
\end{equation}
We have the following, which is the analogous of Proposition \ref{PROPOSITION-local-PS}:
\begin{proposition}\label{PROPOSITION-PS2}
Assume that \ref{asu-sigma}, \ref{asu-infinity}, \ref{asu-zeros}, \ref{asu-heteroclinics}, \ref{asu-symmetry} and \ref{asu-finalK} hold. Then, there exists a sequence $(u_n)_{n \in \N}$ in $\scrH_{\sym}$ such that 
\begin{equation}\label{PS2-sym}
\lim_{n \to +\infty} J(u_n) \to \Fc_\sym \mbox{ and } DJ(u_n) \to 0 \mbox{ in } \scrH_\sym
\end{equation}
and, moreover,
\begin{equation}\label{PS2-sym2}
\lim_{n \to +\infty}\mathrm{dist}_\scrH(u_n+\psi,X_{\sym,+}\cap A_\sym) =0.
\end{equation}
\end{proposition}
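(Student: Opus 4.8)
The plan is to run the argument of Proposition \ref{PROPOSITION-local-PS} inside the Hilbert space $\scrH_\sym$, with the set $F$ there replaced by a neighbourhood of $G_\sym:=(X_{\sym,+}\cap A_\sym)-\{\psi\}=F_\sym\cap(X_{\sym,+}-\{\psi\})$, and with the symmetrization map $h_\sym$ of Lemma \ref{LEMMA-hsym} inserted at the two points where the deformation would otherwise leave the class $\Gamma_\sym^{+}:=\{\gamma\in\Gamma_\sym:(\psi+\gamma)([0,1])\subset X_{\sym,+}\}$; thus the proof fuses the mechanism of Proposition \ref{PROPOSITION-sym-+} (handling $X_{\sym,+}$) with that of Proposition \ref{PROPOSITION-local-PS} (handling $K_\sym$). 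Since \eqref{PS2-sym2} reads $\mathrm{dist}_\scrH(u_n,G_\sym)\to0$, I argue by contradiction: if no sequence as in the statement exists, then (taking $h=\mu=\nu=1/n$ in the negation and then shrinking) there are $h,\mu,\nu>0$ — with $h<\tfrac12\min\{M-\Fc_\sym,\,\Fc_\sym-\Fm\}$ and $h,\nu$ otherwise as small as we wish — such that
\begin{equation}
\forall\,v\in J^{-1}\bigl([\Fc_\sym-h,\Fc_\sym+h]\bigr)\cap(G_\sym)_{\nu},\qquad \lVert DJ(v)\rVert_{(\scrH_\sym)'}\ge\mu ;
\end{equation}
here one uses that $J|_{\scrH_\sym}$ is $C^1$ (Lemma \ref{LEMMA-C1}(ii)) and $\Fc_\sym>\Fm$ (Proposition \ref{PROPOSITION-sym-MP}). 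One may assume $K_\sym\subset\{u_1=0\}$, since for $\gamma\in\Gamma_\sym$ only $K_\sym\cap\{u_1=0\}$ is ever met by $\gamma(s)(0)$ while \ref{asu-finalK}(1) is only reinforced by shrinking $K_\sym$; and the energy constraint ``$E\ge\Fc_\sym$'' in $A_\sym$ is essentially free — a function close to $\{q\in X_{\sym,+}:q(0)\in K_\sym\}$ at energy $\Fc_\sym+o(1)$ is $o(1)$-close to $X_{\sym,+}\cap A_\sym$ after a cheap energy adjustment (e.g. a tiny time rescaling) — so one is free to work with the energy constraint dropped whenever convenient.

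Next I would invoke Willem's Lemma \ref{LEMMA_Willem} with $X=\scrH_\sym$, $I=J$, $c=\Fc_\sym$, $S:=(G_\sym)_{\nu/4}$, $\eps:=h/2$, $\rho:=\nu/4$, shrinking $h$ so that $\mu\ge 8\eps/\rho$ and $\nu$ so that the displacement bound (iv) is small relative to $\nu$ and to the constant $\nu_0$ of \ref{asu-finalK}; since $S_{2\rho}\subset(G_\sym)_{\nu}$, the displayed inequality is precisely \eqref{deformation_contr}, so one obtains $\eta\in C([0,1]\times\scrH_\sym,\scrH_\sym)$ with the properties (i)--(vi). Pick $\overline\gamma\in\Gamma_\sym$ with $\max_sJ(\overline\gamma(s))\le\Fc_\sym+h/4$ and replace it by $h_\sym\circ\overline\gamma$: by Lemma \ref{LEMMA-hsym} this is again admissible, lies in $\Gamma_\sym^{+}$, and has not larger max of $J$, so assume $\overline\gamma\in\Gamma_\sym^{+}$. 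Put $\widehat\gamma:=\eta(1,\overline\gamma(\cdot))$ and $\gamma^{+}:=h_\sym\circ\widehat\gamma$. Since $\CV_{\sym,i}$ sits at level $\Fm<\Fc_\sym-2\eps$, property (i) fixes the endpoints, so $\widehat\gamma(i)=\overline\gamma(i)\in\CV_{\sym,i}\subset X_{\sym,+}-\{\psi\}$, whence $\gamma^{+}(i)=\overline\gamma(i)$ and, by Lemma \ref{LEMMA-hsym}, $\gamma^{+}\in\Gamma_\sym^{+}$; by (v) and Lemma \ref{LEMMA-hsym}, $\max_sJ(\gamma^{+}(s))\le\max_sJ(\overline\gamma(s))\le\Fc_\sym+h/4<M$.

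The contradiction comes from \ref{asu-finalK}(2) applied to $\gamma^{+}$: there is $s_\star$ with $J(\gamma^{+}(s_\star))\ge\Fc_\sym$ and $\gamma^{+}(s_\star)(0)\in K_\sym$. The decisive point is that $h_\sym$ does not move the value at $t=0$ — for $w\in\scrH_\sym$ one has $w_1(0)=0$ and $\psi(0)=0$, hence $(h_\sym(w)+\psi)(0)=F_+(w+\psi)(0)=(w+\psi)(0)$ — so $\widehat\gamma(s_\star)(0)=\gamma^{+}(s_\star)(0)\in K_\sym$, while $\Fc_\sym\le J(\gamma^{+}(s_\star))\le J(\widehat\gamma(s_\star))\le J(\overline\gamma(s_\star))\le\Fc_\sym+h/4$ by Lemma \ref{LEMMA-hsym} and (v). As $\widehat\gamma(s_\star)=\eta(1,\overline\gamma(s_\star))$ and $\eta$ displaces points by less than the tiny amount fixed above, the Sobolev embedding gives $\mathrm{dist}_{\R^k}(\overline\gamma(s_\star)(0),K_\sym)<\nu_0$; since moreover $\psi+\overline\gamma(s_\star)\in X_{\sym,+}$ and $K_\sym\subset\{u_1=0\}$, correcting $\overline\gamma(s_\star)$ by an even perturbation supported near $t=0$ and affecting only the components $u_2,\dots,u_k$ (hence preserving $\psi+\cdot\in X_{\sym,+}$) of $\scrH$-cost comparable to the displacement — and, if needed, adjusting the energy cheaply — places a point of $X_{\sym,+}\cap A_\sym$ within $\scrH$-distance $\nu/4$ of $\overline\gamma(s_\star)$, so $\overline\gamma(s_\star)\in S$. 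But $\overline\gamma(s_\star)\in J^{-1}([\Fc_\sym,\Fc_\sym+h/4])\subset I^{\Fc_\sym+\eps}$, so property (ii) forces $J(\widehat\gamma(s_\star))=J(\eta(1,\overline\gamma(s_\star)))\le\Fc_\sym-\eps<\Fc_\sym$, contradicting $J(\widehat\gamma(s_\star))\ge\Fc_\sym$. This proves Proposition \ref{PROPOSITION-PS2}; Theorem \ref{THEOREM-symmetry-final} then follows by feeding the resulting sequence into Lemmas \ref{LEMMA-PSbasic} and \ref{LEMMA-3rdwell} and running the dichotomy analysis of Theorem \ref{THEOREM-symmetry} as in subsection \ref{SUBS-SYM}.

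I expect the technical heart to be exactly this last step: Willem's $\eta$ lives in $\scrH_\sym$ and ignores the convex constraint set $X_{\sym,+}-\{\psi\}$, whereas the projection $h_\sym$ that restores it is continuous but not close to the identity, so the contradiction must be routed through $\overline\gamma(s_\star)$ (which does lie in $\Gamma_\sym^{+}$) rather than through $\widehat\gamma(s_\star)$ or $\gamma^{+}(s_\star)$. The two facts that make this possible — that $h_\sym$ fixes $t=0$ values and never increases $J$ — must be combined with a careful choice of all the neighbourhood radii (keeping the displacement in (iv) small compared with $\nu$ and $\nu_0$) and with the explicit local surgery near $t=0$ turning a positive path point whose value merely approaches $K_\sym$ into one genuinely lying in $X_{\sym,+}\cap A_\sym$. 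Verifying that these radii, and the energy constraint in $A_\sym$, can be chosen consistently — rather than producing a circular estimate — is the point where the argument genuinely departs from the non-symmetric Proposition \ref{PROPOSITION-local-PS}.
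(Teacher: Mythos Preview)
Your proposal is correct and follows essentially the same approach as the paper, which only states that the proof is ``analogous to the proof of Proposition~\ref{PROPOSITION-local-PS}'' with the single modification that the deformed path must be pushed back into $X_{\sym,+}$ via Lemma~\ref{LEMMA-hsym}. You have fleshed out precisely this sketch: run Willem's deformation in $\scrH_\sym$, post-compose with $h_\sym$ to obtain a path in $\Gamma_\sym^+$ to which \ref{asu-finalK} applies, and derive the contradiction. The two observations you isolate---that $h_\sym$ fixes the value at $t=0$ (because any $q\in X_\sym$ has $q_1(0)=0$) and that $h_\sym$ never increases $J$---are exactly the ingredients the paper alludes to, and your routing of the contradiction through $\overline\gamma(s_\star)$ rather than $\widehat\gamma(s_\star)$ is the right way to exploit property~(ii) of Lemma~\ref{LEMMA_Willem}.

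The only place where you go beyond what the paper's sketch implicitly requires is the local surgery near $t=0$ and the accompanying radii bookkeeping to land exactly in $X_{\sym,+}\cap A_\sym$. This is legitimate and can be made to work as you indicate (choose $\rho$ small relative to $\nu$, then $\eps\le\mu\rho/8$, then $h=2\eps$; the energy constraint in $A_\sym$ is indeed soft since a Palais--Smale sequence at level $\Fc_\sym$ automatically satisfies $E\ge\Fc_\sym-o(1)$ and can be corrected by an $o(1)$ perturbation away from $t=0$). The paper presumably absorbs this into the word ``analogous'', so there is no discrepancy in strategy---only in the level of detail.
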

The proof of Proposition \ref{PROPOSITION-PS2} is analogous to the proof of Proposition \ref{PROPOSITION-local-PS}. The only significant difference is that the path which is obtained from the deformation provided by Lemma \ref{LEMMA_Willem} must be contained in $X_{\sym.,+}$ in order to get the contradiction with \ref{asu-finalK}. However, this can be assumed by Lemma \ref{LEMMA-hsym}. Hence, we do not include the proof of Proposition \ref{PROPOSITION-PS2} here.

\textit{Proof of Theorem \ref{THEOREM-symmetry-final} completed}. We now suppose that the assumptions of Theorem \ref{THEOREM-symmetry-final} are satisfied.  Let $(u_n)_{n \in \N}$ be the Palais-Smale sequence given by Proposition \ref{PROPOSITION-PS2}. As done before, up to an extraction we can use \ref{asu-otherwells-sym} and Lemma \ref{LEMMA-3rdwell} to find $\rho_2>0$ such that
\begin{equation}\label{SYM2-proof-3rdwell}
\forall n \in \N, \forall \sigma \in \Sigma \setminus \{ \sigma^-,\sigma^+\}, \forall t \in \R, \hspace{2mm} \lvert u_n(t)+\psi(t)-\sigma \rvert \geq \rho_2.
\end{equation}
Regarding $(u_n)_{n \in \N}$ as a Palais-Smale sequence in $\scrH_\sym$ and using Lemma \ref{LEMMA-PSbasic}, we find $\Fu \in H^1_{\loc}(\R,\R^k)$ such that $E(\Fu) \leq \Fc_\sym$, $u_n+\psi \to \Fu$ strongly in $H^1(S_K,\R^k)$ ($S_K$ compact). By (\ref{SYM2-proof-3rdwell}), we have that
\begin{equation}\label{SYM2-proof-1}
\forall \sigma \in \Sigma \setminus \{ \sigma^-,\sigma^+\}, \forall t \in \R, \hspace{2mm} \lvert \Fu(t) - \sigma \rvert \geq \rho_2.
\end{equation}
By pointwise convergence, we have for all $t \in \R$, that $\Fu(-t)=\Fs(\Fu(t))$. Since $u_n+\psi$ approaches $X_{\sym,+}$ due to (\ref{PS2-sym2}), we have for all $t \geq 0$, $\Fu_1(t) \geq 0$ and analogously for $t \leq 0$. These facts along with (\ref{SYM2-proof-1}) give $\lim_{t \to \pm \infty}\Fu(t)=\sigma^\pm$, which all together implies $\Fu \in X_{\sym,+}$. Finally, using again (\ref{PS-sym2}) we have $\Fu(0) \in K_\sym$, which by \ref{asu-finalK} means that $\Fu \not \in \CF_\sym$, i.e., $E(\Fu) > \Fm$.
\qed
\section{On the assumptions \ref{asu-K} and \ref{asu-finalK}}\label{section-asu}
As commented in subsection \ref{SUBS-previous}, assumptions \ref{asu-K} and \ref{asu-finalK} might appear as rather artificial and, moreover, difficult to verify in hypothetical applications. Despite the fact that in Theorem \ref{THEOREM-symmetry} we show that \ref{asu-K} can be removed if we restrict to potentials which are symmetric as in \ref{asu-symmetry}, we believe that a better understanding of \ref{asu-K} is still an interesting open question. Indeed, even though adding symmetry is a natural procedure in order to simplify a problem, it can be found to be too restrictive in many applications. In this direction, we show in Lemma \ref{LEMMA-3m} that \ref{asu-K} holds if the mountain pass value $\Fc$ lies outside some known countable subset of $(\Fm,+\infty)$, and in particular if it is smaller than $3\Fm$. As explained in subsection \ref{SUBS-previous}, this requirement is equivalent to the assumption made by Bisgard in \cite{bisgard}. In any case, a better understanding of hypothesis \ref{asu-K} and \ref{asu-finalK} remains an open problem. Geometric intuition suggests that such hypothesis should always (or close) hold, but we do not have a proof of such a fact. The same type of comment is made by Bisgard in \cite{bisgard}, where he states (see the Remark after his \textit{Theorem 2.3})  that he expects his assumption on $\Fc$ to be generic (that is, valid for a dense class of potentials). We also think that this is the natural conjecture as the set of \textit{bad} values for $\Fc$ is discrete.We believe that a starting point to aim at understanding this question better would be to try to understand the relation between the mountain pass value and the geometry of $V$ in a deeper fashion.

We now state the result which links \ref{asu-K} and Bisgard's assumption:
\begin{lemma}\label{LEMMA-3m}
Let $V$ be a potential satisfying \ref{asu-sigma}, \ref{asu-infinity}, \ref{asu-zeros} and \ref{asu-heteroclinics}. Let $\Fc$ be the mountain pass value defined in (\ref{Fc}). Then, if we have
\begin{equation}\label{2k+1m}
\Fc \in (\Fm,+\infty)\setminus \{ (2j+1)\Fm: j \in \N^*\},
\end{equation}
there exists $K \subset \R^k$ such that assumption \ref{asu-K} is satisfied for some constants $\nu_0>0$ and $M > \Fc$.
\end{lemma}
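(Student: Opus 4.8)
\emph{Plan of proof.} The idea is to take for $K$ the complement of a thin tubular neighbourhood of the union of \emph{all} minimizing heteroclinic trajectories, to observe that item~1 of \ref{asu-K} is then automatic, and to prove item~2 by contradiction, letting the thickness tend to $0$ and invoking the asymptotic analysis of Proposition~\ref{PROPOSITION_ASYMPTOTIC_PS}. Concretely, set
\begin{equation}
\mathcal{K}:=\bigcup_{\Fq\in\CF}\Fq(\R)\subset\R^k .
\end{equation}
By \ref{asu-infinity} finite--energy curves are bounded and, by Theorem~\ref{THEOREM-0}, $\CF/\R$ is compact, so $\mathcal{K}$ is compact; moreover \ref{asu-wells} together with the standard cut--and--paste estimate gives $\mathcal{K}\cap\Sigma=\{\sigma^-,\sigma^+\}$ and $\mathrm{dist}(\Sigma\setminus\{\sigma^-,\sigma^+\},\mathcal{K})>0$. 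For $r>0$ put $\mathcal{N}_r:=\{u:\mathrm{dist}(u,\mathcal{K})<r\}$ and $K_r:=\R^k\setminus\mathcal{N}_r$, which is closed; item~1 of \ref{asu-K} holds with $\nu_0=r$ since $\Fq(\R)\subset\mathcal{K}$ forces $\mathrm{dist}(\Fq(\R),K_r)\geq r$ for every $\Fq\in\CF$.

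For item~2, suppose for contradiction that it fails for $K_r$ for \emph{every} $r>0$, and fix $r>0$. I first claim that there is a Palais--Smale sequence for $J$ at the level $\Fc$ whose elements $v$ eventually satisfy $(v+\psi)(\R)\subset\mathcal{N}_{3r}$. If not, a standard argument produces $h,\mu>0$ with $\lVert DJ(v)\rVert_\scrH\geq\mu$ whenever $\lvert J(v)-\Fc\rvert\leq 2h$ and $(v+\psi)(\R)\subset\mathcal{N}_{3r}$; applying Lemma~\ref{LEMMA_Willem} with $X=\scrH$, $I=J$, $c=\Fc$, $S=\{v\in\scrH:(v+\psi)(\R)\subset\mathcal{N}_{2r}\}$ (nonempty, as $\CV_0\cup\CV_1\subset S$) and $\eps,\rho>0$ small enough that $S_{2\rho}\subset\{v:(v+\psi)(\R)\subset\mathcal{N}_{3r}\}$, $\eps\leq h$, $8\eps/\rho\leq\mu$ and $2\eps<\Fc-\Fm$, yields a deformation $\Theta$. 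Since item~2 is assumed to fail for $M:=\Fc+\eps$, there is $\gamma\in\Gamma$ with $\max_sJ(\gamma(s))\leq\Fc+\eps$ such that $(\gamma(s)+\psi)(\R)\subset\mathcal{N}_r$, hence $\gamma(s)\in S$, for every $s$ with $J(\gamma(s))\geq\Fc$. Then $\hat\gamma:=\Theta(1,\gamma(\cdot))\in C([0,1],\scrH)$ satisfies $J(\hat\gamma(s))\leq\Fc-\eps$ on $\{J(\gamma(s))\geq\Fc\}$ by (ii) of Lemma~\ref{LEMMA_Willem}, $J(\hat\gamma(s))\leq J(\gamma(s))<\Fc$ elsewhere by (v), and $\hat\gamma(i)=\gamma(i)\in\CV_i$ by (i) because $J(\gamma(i))=\Fm<\Fc-2\eps$; thus $\hat\gamma\in\Gamma$ and $\max_sJ(\hat\gamma(s))<\Fc$, contradicting $\Fc=\inf_\Gamma\max J$ (recall $\Fc>\Fm$ by Proposition~\ref{PROPOSITION-mountain-pass}). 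This proves the claim.

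Now choose $r=1/m$ and pick, for each $m$, a point $v_m$ of the corresponding sequence with $\lvert J(v_m)-\Fc\rvert<1/m$, $\lVert DJ(v_m)\rVert_\scrH<1/m$ and $(v_m+\psi)(\R)\subset\mathcal{N}_{3/m}$; then $(v_m)_m$ is a Palais--Smale sequence for $J$ at level $\Fc$. By Proposition~\ref{PROPOSITION_ASYMPTOTIC_PS} (with $\chi=\psi$), up to a subsequence there are solutions $q^1,\dots,q^j$ of (\ref{connections-eq}) with $\Fc=\sum_{i=1}^jE(q^i)$, $\lim_{t\to-\infty}q^1=\sigma^-$, $\lim_{t\to+\infty}q^j=\sigma^+$, $\lim_{t\to+\infty}q^i=\lim_{t\to-\infty}q^{i+1}$; and since the translated intervals $A_m^i-\tau_m^i$ eventually cover any compact subset of $\R$ while $(v_m+\psi)(\R)\subset\mathcal{N}_{3/m}$, the $H^1$--convergence forces $q^i(\R)\subset\mathcal{K}$ for every $i$. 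Discarding constant pieces, each $q^i$ is a non--constant solution confined to $\mathcal{K}$, hence (as $\mathcal{K}\cap\Sigma=\{\sigma^-,\sigma^+\}$) joins two elements of $\{\sigma^-,\sigma^+\}$. The crux is then the structural fact: \emph{a non--constant solution $q$ of (\ref{connections-eq}) with $q(\R)\subset\mathcal{K}$ is, up to translation and time reversal, an element of $\CF$, so $E(q)=\Fm$}. Indeed, conservation of $\tfrac12\lvert q'\rvert^2-V(q)$ and Lemma~\ref{LEMMA-limits-V} give $\tfrac12\lvert q'\rvert^2=V(q)$, so $q'$ never vanishes; hence $q$ never meets a well without converging to it and can never reverse, so it performs a single well--to--well traversal inside the arc--system $\mathcal{K}$; and since at each point the velocity direction determines which minimizing trajectory $q$ follows (distinct elements of $\CF$ cannot share position, speed and direction, and minimizing trajectories are embedded), $q$ coincides with one of them. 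Consequently $\Fc=j\Fm$; and because each $q^i$ joins $\sigma^-$ and $\sigma^+$, the terminal wells alternate along the chain, so $j$ is odd, whence $\Fc=(2l+1)\Fm$ with $l\geq 1$ (as $\Fc>\Fm$), contradicting (\ref{2k+1m}). Therefore item~2 holds for some $K_r$, which completes the proof.

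\medskip

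The two steps needing genuine care are the deformation step and the structural fact. For the first, one must check that the ``bad'' path gets pushed \emph{strictly below} level $\Fc$ at every parameter: this works precisely because on $\{s:J(\gamma(s))\geq\Fc\}$ the curve is trapped in the tube (hence in $S$, where (ii) of Lemma~\ref{LEMMA_Willem} applies), while on the complementary set the energy is already below $\Fc$ and $\Theta$ does not increase it. For the second, the proof must rule out that a trajectory switches from one minimizing heteroclinic to another where their curves cross, and must control the behaviour near the wells; here the equipartition identity, the non--degeneracy of the wells (Lemma~\ref{LEMMA-nondegeneracy}) and the compactness from Theorem~\ref{THEOREM-0} are the essential inputs. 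I expect this structural fact — in particular that distinct minimizing trajectories cannot cross transversally in their interiors — to be the main obstacle.
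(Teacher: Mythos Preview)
Your approach mirrors the paper's almost exactly: both take $K$ to be the complement of a tube around $\mathcal{K}=\bigcup_{\Fq\in\CF}\Fq(\R)$, assume item~2 fails for every thickness, extract a Palais--Smale sequence at level $\Fc$ whose elements are trapped in shrinking tubes, apply Proposition~\ref{PROPOSITION_ASYMPTOTIC_PS}, and conclude $\Fc=(2j+1)\Fm$. The only procedural difference is that you build the trapped PS sequence by running Lemma~\ref{LEMMA_Willem} at each fixed $r$ and then diagonalizing, whereas the paper lets $\eps_n\to0$ and $M_n\to\Fc^+$ simultaneously and invokes \textit{Corollary~4.3} of Mawhin--Willem~\cite{mawhin-willem} to localize the PS sequence near $\{J\geq\Fc\}\cap\gamma_{\eps_n}([0,1])$; both routes yield the same object.

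On the ``structural fact'' that a solution with trace contained in $\mathcal{K}$ must be constant, an element of $\CF$, or a time-reversal thereof: the paper asserts this in a single line without argument, so your sketch via equipartition and ODE uniqueness already goes further than the paper does. Your concern is well placed, though note that the real danger is not transversal crossings of two isolated minimizers (there ODE uniqueness handles it cleanly, since $q'$ is continuous and cannot jump from one tangent direction to another) but the possibility that $\CF/\R$ is a genuine continuum so that $\mathcal{K}$ acquires nonempty interior; in that scenario the step ``at each point the velocity direction determines which minimizing trajectory $q$ follows'' fails because $q'(t)$ need not be tangent to \emph{any} leaf of $\CF$. The paper does not address this either, so at the paper's level of rigor your proof is complete.
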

\begin{proof}
For each $\eps>0$, define
\begin{equation}\label{K_eps}
\tilde{K}_\eps:= \bigcup_{\Fq \in \CF} \{ \bu \in \R^k: \mathrm{dist}(\bu,\Fq(\R)) < \eps\}
\end{equation}
and $K_\eps:= \R^k \setminus \tilde{K}_\eps$. The proof will be concluded if we show the existence of $\nu_0>0$ and $M>0$ such that for any $\gamma \in \Gamma$, with $\max_{s \in [0,1]}J(\gamma(s)) \leq M$ there exists $s_\gamma \in [0,1]$ such that $(\gamma(s_\gamma)+\psi)(\R) \cap K_{\nu_0} \not = \emptyset$ and $J(\gamma(s_\gamma)) \geq \Fc$. By contradiction, assume that for any $\eps >0$ and $M > \Fc$, there exists $\gamma_\eps \in \Gamma$ with $\max_{s \in [0,1]}J(\gamma_\eps(s)) \leq M$ such that for all $s \in [0,1]$ satisfying $(\gamma_\eps(s)+\psi)(\R) \cap K_\eps \not = \emptyset$ we have $J(\gamma_\eps(s)) < \Fc$. Otherwise stated, if $s \in [0,1]$ is such that $J(\gamma_\eps(s)) \geq \Fc$, then $(\gamma_\eps(s)+\psi)(\R) \subset \tilde{K}_\eps$. Taking subsequences $(\eps_n)_{n \in \N}$ and $(M_n)_{n \in \N}$ such that $\eps_n \to 0^+$ and $M_n \to \Fc^+$ as $n \to \infty$, we have found a sequence of paths $(\gamma_{\eps_n})_{n \in \N}$ such that $\max_{s \in [0,1]}J(\gamma_{\eps_n}(s)) \to \Fc$. By usual arguments (for instance \textit{Corollary 4.3} in Mawhin and Willem \cite{mawhin-willem}), we find a Palais-Smale sequence $(v_{\eps_n})_{n \in \N}$ at the level $\Fc$ such that
\begin{equation}\label{lim-epsn-1}
\lim_{n \to \infty} \mathrm{dist}_{\scrH}(v_{\eps_n},\{ v \in \scrH: J(v) \geq \Fc\} \cap \gamma_{\eps_n}([0,1])) \to 0.
\end{equation}
Due to the contradiction assumption stated above, we have that if
\begin{equation}
v \in \{ v \in \scrH: J(v) \geq \Fc\} \cap \gamma_{\eps_n}([0,1]),
\end{equation}
then $(v+\psi)(\R) \subset \tilde{K}_{\eps_n}$. The goal now is to obtain that $\Fc = (2j+1)\Fm$ for some $j \in \N^*$, which will give the desired contradiction since we assume (\ref{2k+1m}). Let $s_\tau:= (\tau_n)_{n \in \N}$ be an arbitrary sequence in $\R$. Using Lemma \ref{LEMMA-PSbasic}, we have that $(v_n^{\tau_n})_{n \in \N}$  (with the notations as in the second part of Lemma \ref{LEMMA-PSbasic}) is a Palais-Smale sequence at the level $\Fc$ converging (up to subsequences) locally in $H^1$ to $\Fq^{s_\tau}$ a solution of (\ref{connections-eq}) with $E(\Fq^{s_\tau}) \leq \Fc$. Using (\ref{lim-epsn-1}), we have that in fact $\Fq^{s_\tau}$ is either a constant equal to $\sigma^-$ or $\sigma^+$, $\Fq^{s_\tau} \in \CF$ or $\Fq^{s_\tau}(-\cdot) \in \CF$. Therefore, by Proposition \ref{PROPOSITION_ASYMPTOTIC_PS} it follows that there exists $j \in \N^*$ and sequences $((t_n^0,\ldots,t_n^{2j+1}))_{n \in \N}$, $(\tau_n^1,\ldots,\tau_n^{2j+2}))_{n \in \N}$  in $\R^{2j+2}$ and $\R^{2j+1}$ respectively  such that (up to an extraction)
\begin{equation}
\forall j' \in \{1,\ldots,2j+2\}, \hspace{2mm}  t_n^{j'-1} - t_n^{j'} \to +\infty \mbox{ as } n \to \infty,
\end{equation}
\begin{align}
\forall j' \in \{0,\ldots,j\},\hspace{2mm}& \lim_{n \to \infty}v_n(t^{2j'}_n)+\psi(t^{2j'}_n)=\sigma^-,\\ &\lim_{n \to \infty}v_n(t^{2j'+1}_n)+\psi(t^{2j'+1}_n)=\sigma^+,
\end{align}
\begin{align}
\forall j' \in \{ 1, \ldots, 2j+2\}, \hspace{2mm}&(v_n+\psi) - \Fq^{j'}(\cdot-\tau_n^{j'}) \to 0\\ &\mbox{ strongly in } H^1([t_n^{j'},t_n^{j'+1}],\R^k) \mbox{ as } n \to \infty
\end{align}
with $\Fq^{j'}  \in \CF$  if $j'$ is even and $\Fq^{j'} (-\cdot) \in \CF$ if $j'$ is odd. Moreover
\begin{equation}
\lim_{n \to \infty}E(v_n+\psi)=\lim_{n \to +\infty}\sum_{j'=1}^{2j+2}\int_{t^{j'-1}_n}^{t_n^{j'}}e(v_n+\psi)=(2j+1)\Fm
\end{equation}
which gives the desired contradiction.
\qed
\end{proof}
Notice that if $\Fc<3\Fm$, then (\ref{2k+1m}) holds.
 \begin{remark}\label{REMARK-3m}
An interpretation of Lemma \ref{LEMMA-3m} can be given as follows: Take a function $q$ which has energy strictly greater than $\Fm$, $DJ$ applied to $q-\psi$ has \textit{small} norm and the trace of $q$ is \textit{close} enough to the traces of the elements of $\CF$.  Then, $q$ must look close to one element of $\CF$ which is glued to $j \geq 1$ \textit{cycles} in $\CF$. Such cycles are as follows: take an element of $\CF$ and glue it to an element of $\CF$ with reversed sign to obtain a connecting orbit joining $\sigma^-$ and $\sigma^+$. The energy of $q$ must be then close to $(2j+1)\Fm$. This argument is the key of the proof of Lemma \ref{LEMMA-3m}. An illustration is shown in Figure \ref{FIGURE-3m}. In different words words, Palais-Smale sequences which have the type of behavior described above yield only trivial solutions. The point of assumptions \ref{asu-K} and \ref{asu-finalK} is to exclude such type of behaviors for Palais-Smale sequences.
\end{remark}
\begin{figure}\label{FIGURE-3m}
\centering
\includegraphics[scale=0.6]{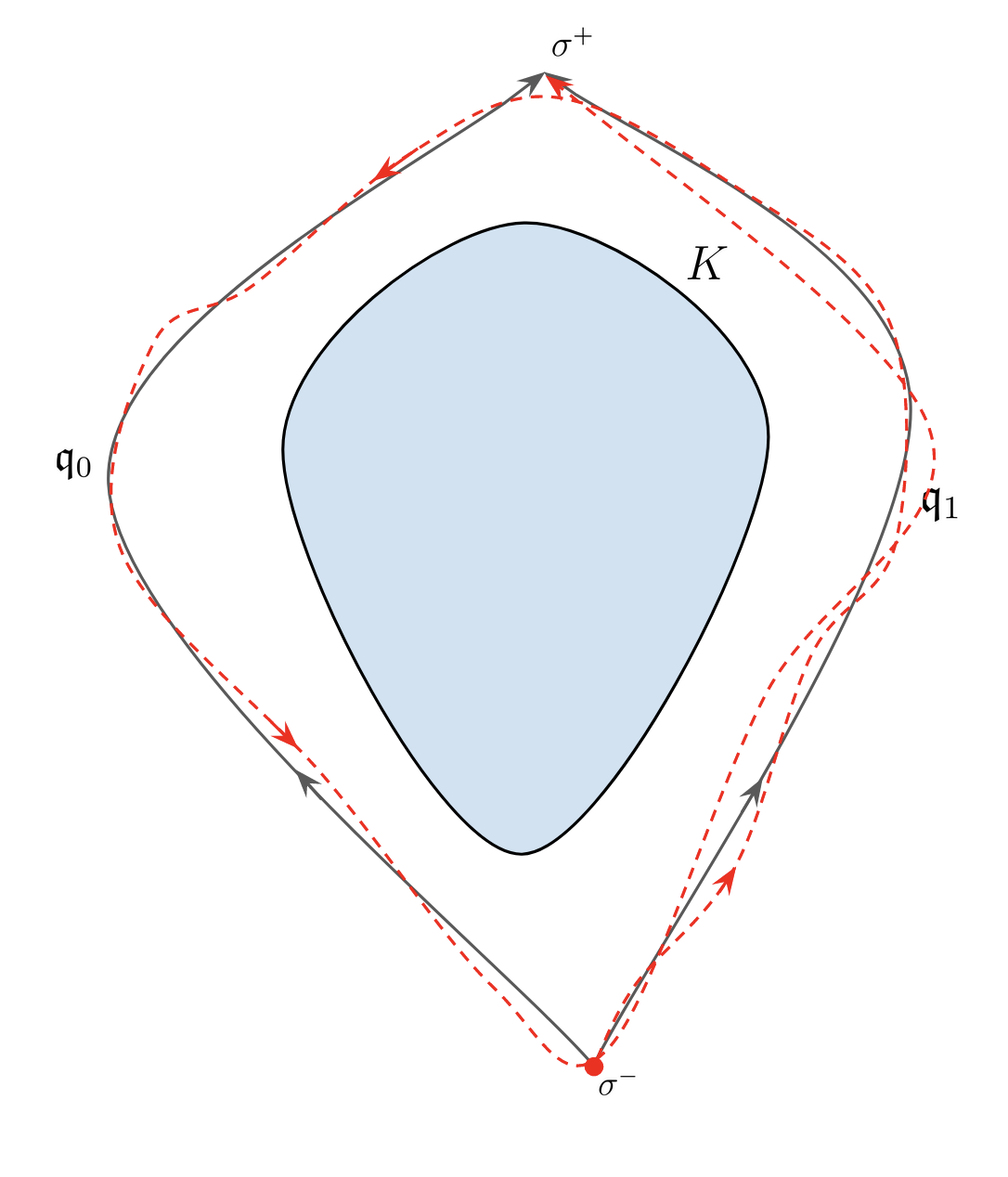}
\caption{Illustration of Remark \ref{REMARK-3m}. The functions $\Fq_0$ and $\Fq_1$ represent two globally minimizing heteroclinics joining $\sigma^-$ and $\sigma^+$. The set $K$ is away from the traces of $\Fq_0$ and $\Fq_1$. The discontinuous curve represents the function with the \textit{bad} behavior that we want to avoid by introducing \ref{asu-K} and \ref{asu-finalK}. In particular, this behavior is excluded if the mountain pass level is below the minimum energy necessary for a function to behave like the discontinuous curve (see Lemma \ref{LEMMA-3m}).}
\end{figure}  
We obtain the analogous result for symmetric potentials, with an identical proof:
\begin{lemma}\label{LEMMA-3m-sym}
Let $V$ be a potential satisfying \ref{asu-sigma}, \ref{asu-infinity}, \ref{asu-zeros}, \ref{asu-heteroclinics} and \ref{asu-symmetry}. Let $\Fc_\sym$ be the mountain pass value defined in (\ref{Fc-sym}). Then, if we have
\begin{equation}\label{2k+1m_sym}
\Fc_\sym \in (\Fm,+\infty)\setminus \{ (2j+1)\Fm: j \in \N^*\},
\end{equation}
there exists $K_\sym \subset \R^k$ such that assumption \ref{asu-finalK} is satisfied for some constants $\nu_0>0$ and $M > \Fc$.
\end{lemma}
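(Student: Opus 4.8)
The plan is to transcribe the contradiction argument of Lemma~\ref{LEMMA-3m} into the symmetric setting, replacing every object by its symmetric counterpart: $\CF$ by $\CF_\sym$, the ambient Hilbert space $\scrH$ by $\scrH_\sym$, $\Gamma$ by $\Gamma_\sym$ restricted (via Lemma~\ref{LEMMA-hsym}) to those $\gamma_+$ with $(\psi+\gamma_+)([0,1])\subset X_{\sym,+}$, and the mountain pass value $\Fc$ by $\Fc_\sym$. For $\eps>0$ I would set $\tilde K_\eps:=\bigcup_{\Fq\in\CF_\sym}\{\bu\in\R^k:|\bu-\Fq(0)|<\eps\}$ and $K_\eps:=\R^k\setminus\tilde K_\eps$; since $\Fq(0)\in\tilde K_\eps$ for every $\Fq\in\CF_\sym$, part~(1) of \ref{asu-finalK} holds with $K_\sym=K_\eps$ and $\nu_0=\eps$. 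It remains to produce $\eps>0$ and $M>\Fc_\sym$ for which part~(2) holds, and this is done by contradiction.

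If part~(2) failed for all $\eps,M$, I would take $\eps_n\to0^+$, $M_n\to\Fc_\sym^+$ and obtain paths $\gamma_{+,n}\in\Gamma_\sym$ with $(\psi+\gamma_{+,n})([0,1])\subset X_{\sym,+}$, $\max_sJ(\gamma_{+,n}(s))\le M_n$, and such that $J(\gamma_{+,n}(s))\ge\Fc_\sym$ forces $(\gamma_{+,n}(s)+\psi)(0)\in\tilde K_{\eps_n}$. These are min-maxing paths, so the usual variant of the mountain pass lemma (\textit{Corollary 4.3} in Mawhin and Willem \cite{mawhin-willem}) applied in $\scrH_\sym$ furnishes a Palais--Smale sequence $(v_n)$ for $J$ in $\scrH_\sym$ at the level $\Fc_\sym$ with $\mathrm{dist}_{\scrH_\sym}(v_n,\{v\in\scrH_\sym:J(v)\ge\Fc_\sym\}\cap\gamma_{+,n}([0,1]))\to0$; the embedding $\scrH_\sym\hookrightarrow L^\infty(\R,\R^k)$ then gives $(v_n+\psi)(0)\to\{\Fq(0):\Fq\in\CF_\sym\}$ and shows that $v_n+\psi$ stays asymptotically inside $X_{\sym,+}$.

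Regarding $(v_n)$ now as a Palais--Smale sequence in the full space $\scrH$ (legitimate since $\scrH_\sym$ is a closed subspace of $\scrH$), I would combine Lemma~\ref{LEMMA-PSbasic} with \ref{asu-otherwells-sym}, Lemma~\ref{LEMMA-3rdwell} and Proposition~\ref{PROPOSITION_ASYMPTOTIC_PS}: up to a subsequence $v_n+\psi$ splits into a chain $\Fq^1,\dots,\Fq^J$ of solutions of (\ref{connections-eq}) joining $\sigma^-$ to $\sigma^+$, avoiding the third wells, with $\sum_iE(\Fq^i)=\Fc_\sym$, and by the symmetry of the $v_n$ this chain is symmetric: a central piece belonging to $X_{\sym,+}$, with bounded translation parameter which we set to $0$, flanked by $\Fs$-mirror pairs of lateral pieces. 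The control at $t=0$ forces the value of the central piece at $0$ to coincide with $\Fq(0)$ for some $\Fq\in\CF_\sym$; conservation of energy (the constant of motion is $0$ for a connecting orbit) together with the sign constraint inherited from $X_{\sym,+}$ then forces also its derivative at $0$ to coincide with $\Fq'(0)$, so by uniqueness for (\ref{connections-eq}) the central piece equals $\Fq$ and has energy $\Fm$. One then argues, as in Lemma~\ref{LEMMA-3m} but now using the $\Fs$-mirror structure and the compactness of $\CF_\sym$ (Lemma~\ref{LEMMA-ABG}), that every remaining piece is a reflected element of $\CF_\sym$, of energy $\Fm$; counting the pieces along $\sigma^-\to\cdots\to\sigma^+$ yields $\Fc_\sym=(2j+1)\Fm$ for some $j\in\N^*$, contradicting (\ref{2k+1m_sym}).

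All of this except the previous paragraph is a direct transcription of material already in the paper (the reduction to $X_{\sym,+}$-valued paths via Lemma~\ref{LEMMA-hsym} and Proposition~\ref{PROPOSITION-PS2}, the mountain pass lemma with a nearby Palais--Smale sequence, and the passage from $\scrH_\sym$ to $\scrH$). The main obstacle is the identification of the limiting chain: whereas \ref{asu-K} in Lemma~\ref{LEMMA-3m} controls the whole trace of the path, \ref{asu-finalK} controls it only at the single time $t=0$, so one must use the symmetry to propagate that information to every piece of the chain — conservation of energy turning the positional constraint at $0$ into a velocity constraint for the central piece, and the $\Fs$-mirror decomposition, the sign constraint from $X_{\sym,+}$ and Lemma~\ref{LEMMA-3rdwell} handling the lateral pieces. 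This is the only genuine departure from the proof of Lemma~\ref{LEMMA-3m}, and it is where I expect the delicate work to lie.
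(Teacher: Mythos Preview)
Your overall scheme follows the paper's (which simply asserts the proof is ``identical'' to that of Lemma~\ref{LEMMA-3m} and gives no further detail), and you have correctly isolated the one genuinely new difficulty: assumption~\ref{asu-finalK} constrains the curve only at the single time $t=0$, whereas \ref{asu-K} constrains the entire trace. Your device for the central piece --- using the symmetry relations $q_j'(0)=0$ for $j\ge2$ together with conservation of energy and the sign condition from $X_{\sym,+}$ to force $q_1'(0)=+\sqrt{2V(q(0))}$, and then invoking ODE uniqueness --- is a correct and elegant way to pin down that piece as an element of $\CF_\sym$.

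The gap is in your treatment of the lateral pieces. Because the Palais--Smale sequence is $\scrH$-close to $X_{\sym,+}$, the Sobolev embedding gives $(v_n+\psi)_1(t)\ge -o(1)$ uniformly on $[0,+\infty)$; consequently every piece of the limiting chain lying to the right of the central one has its trace confined to the closed half-space $\{u_1\ge 0\}$. Such a piece therefore \emph{cannot} be a heteroclinic between $\sigma^-=(-1,0,\dots,0)$ and $\sigma^+=(+1,0,\dots,0)$, reflected or otherwise: it is a homoclinic to $\sigma^+$ (or, absent further hypotheses, a connecting orbit touching a third well in that half-space --- and note that your appeal to \ref{asu-otherwells-sym} and Lemma~\ref{LEMMA-3rdwell} is not licensed here, since \ref{asu-otherwells-sym} is not among the hypotheses of Lemma~\ref{LEMMA-3m-sym}). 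In particular there is no reason its energy should equal $\Fm$, the ``$\Fs$-mirror structure'' by itself does not force it into $\CF_\sym$, and the identity $\Fc_\sym=(2j+1)\Fm$ does not follow from the argument you outline. Since the paper offers nothing beyond ``identical proof'', the resolution is not spelled out there either; but as written your handling of the lateral pieces does not close the argument.
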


\begin{acknowledgements}
I wish to thank my PhD advisor Fabrice Bethuel for bringing this problem into my attention and for many useful comments and remarks during the elaboration of this paper. I also wish to thank the referee for pointing to several important references such as \cite{bisgard} as well as for numerous remarks and suggestions which lead to significant improvements on the paper.

\includegraphics[scale=0.3]{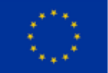}
This program has received funding from the European Union's Horizon 2020 research and innovation programme under the Marie Skłodowska-Curie grant agreement No 754362.
\end{acknowledgements}

\end{document}